\numberwithin{equation}{section}
\newtheorem{thm}{Theorem}[section]
\newtheorem{lemma}[thm]{Lemma}
\newtheorem{remark}[thm]{Remark}
\newtheorem{condition}[thm]{Condition}
\newcommand{\bt}{{\bf T}}
\newcommand{\bx}{{\bf X}}
\newcommand{\by}{{\bf y}}
\newcommand{\ba}{{\bf A}}
\newcommand{\bb}{{\bf B}}
\newcommand{\bd}{{\bf D}}
\newcommand{\bw}{{\bf W}}
\newcommand{\bi}{{\bf I}}
\newcommand{\re}{{\rm E}}
\newcommand{\rtr}{{\rm tr}}
\newcommand{\bbx}{{\bf x}}
\newcommand{\bbY}{{\bf Y}}
\newcommand{\cC}{{\mathcal C}}
\def\vec{\mbox{vec}}
\begin{document}

\title[CLT for LSS of general sample covariance matrix] {Central limit theorem for linear spectral statistics of large dimensional separable sample covariance matrices}

\author{ZHIDONG BAI,\ \ HUIQIN LI,\ \ Guangming Pan}
\thanks{ H. Q. Li was partially supported by China Scholarship Council;
}

\address{KLASMOE and School of Mathematics \& Statistics, Northeast Normal University, Changchun, P.R.C., 130024.}
\email{baizd@nenu.edu.cn}
\address{School of Mathematics and Statistics, Jiangsu Normal University, Xuzhou, P.R.C., 221116.}
\email{lihq118@nenu.edu.cn}
\address{Division of Mathematical Sciences, School of Physical and Mathmatical Sciences, Nanyang Technological University, Singapore 637371.}
\email{GMPAN@ntu.edu.sg}

\subjclass{Primary 15B52, 60F15, 62E20;
Secondary 60F17}

\maketitle

\begin{abstract}
%In this paper, we investigate the central limit theorem for linear spectral statistics (LSS) of large dimensional general sample covariance matrix.
Suppose that $\mathbf X_n=(x_{jk})$ is $N\times n$ whose elements are independent real variables with mean zero, variance 1 and the fourth moment equal to three. The separable sample covariance matrix is defined as $\bb_n=\frac1N\bt_{2n}^{1/2}\bx_n\bt_{1n}\bx_n'\bt_{2n}^{1/2}$ where $\bt_{1n}$ is a symmetric matrix and $\bt_{2n}^{1/2}$ is a symmetric square root of the nonnegative definite symmetric matrix $\bt_{2n}$. %General represents that $\bt_{1n}$ may not be nonnegative.
Its linear spectral statistics (LSS) are shown to have Gaussian limits %with explicit expressions of means and covariance functions
 when $n/N$ approaches a positive constant.% By the Stieltjes transform and characteristic function methods %Especially, when $\bt_{2n}=I$ and $\bt_{1n}$ is nonnegative matrix or  they are opposite, our result is consistent with Bai and Silverstein's in \cite{bai2004clt}.

{\bf Keywords: }  Central limit theorem, General sample covariance matrix, Large dimension, Linear spectral statistics, Random matrix theory.

\end{abstract}

\section{Introduction}

The sample covariance matrix is one of the most commonly studied random matrices in Random Matrix Theory, which can be traced back to Wishart (1928) (see \cite{wishart1928generalised}). It plays an important role in multivariate analysis because many statistics in
traditional multivariate statistical analysis (e.g., principle component analysis, factor analysis and
multivariate regression analysis) can be written as functionals of the eigenvalues of sample covariance matrices.

Large dimensional data now appear in various fields such as finance and genetic experiments due to different reasons.
 To deal with such large-dimensional data, a new area in asymptotic statistics has been
developed where the data dimension p is no more fixed but tends to infinity together with
the sample size n. The random matrices proves to be a powerful tool for such large dimensional statistical problems. One may refer to the latest book in this area by J. F. Yao,
S. R. Zheng and Z. D. Bai (2015), the recent work by Ledoit and Wolf (2004) and Jiang and Yang (2013).

So far, most work focus on the sample covariance matrices of the form
 %the matrices ${\bf S}_n^1=\frac1N\bx_n\bx_n'$ and
  $${\bf S}_n=\frac1N{\bf T}_n^{1/2}\bx_n\bx_n'{\bf T}_n^{1/2}$$ where $\bx_n$ is a $N\times n$ matrix with independent entries and ${\bf T}_n$ is a nonnegative definite symmetric matrix. As we know ${\bf S}_n$ can be viewed as a sample covariance matrix formed from $n$ samples of the random vector  ${\bf T}_n^{1/2}\bbx_1$(where $\bbx_1$ denotes the first column of $\bx_n$, which has population
covariance matrix ${\bf T}_n$.
%Compared with $\bx_n$, the sample matrix ${\bf T}_n^{1/2}\bx_n$ breaks the row independence and still keep the column independent.
Much work has been done on the central limit theorem (CLT) for linear eigenvalues statistics of ${\bf S}_n$ under different assumptions. Among others we mention \cite{bai2004clt,J,JJ,Panzhou2008,Sch2011}. One of the key features of the above sample covariance matrices ${\bf S}_n$ is that the sample are independent. As far as we know there is no CLT available for the sample covariance matrices generated from the dependent sample.

In view of the above we consider a kind of general sample covariance matrices
%We will study the more general case in this paper. A natural idea is to extend ${\bf S}_n^2$ to
\begin{equation}\label{a1}
\bb_n=\frac1N\bt_{2n}^{1/2}\bx_n\bt_{1n}\bx_n'\bt_{2n}^{1/2},
\end{equation}
 where $\bt_{2n}$ is $N\times N$ nonnegative definite symmetric matrix and $\bt_{1n}$ is $n\times n$ symmetric. This model finds applications in the diverse fields including spatio-temporal statistics, wireless communications and
 econometrics. For example, the data matrix can be represented as
\begin{equation}\label{a2}
\bbY_n=\bt_{2n}^{1/2}\bx_n\bt_{1n}^{1/2}
\end{equation}
if $\bt_{1n}$ is nonnegative definite symmetric. Denote by $\vec(\bbY_{n})$ the vector operator that stacks the columns of $\bbY_{n}$ into
a column vector. This model is referred to as a separable covariance model because
the covariance of $\vec(\bbY_{n})$ is the Kronecker product of
$\bt_{1n}$ and $\bt_{2n}$. The rows of the data matrix $\bbY_n$ correspond to indices of spatial locations and the column
indices correspond to points in time in the field of spatio-temporal statistics. This covariance structure implies that the entries of $\bbY_n$ are correlated in time (column),
but the pattern of temporal correlation does not change with location (row). One may see \cite{PS} and the references therein. %In other words, there is
%no space-time interaction in the process.

In econometrics, when determining the number of factors in the approximate factor models \cite{AO} assumes that the idiosyncratic components of the data is of the form $\bbY_n$.  This allows the idiosyncratic terms to be non-trivially correlated both cross-sectionally and over time. The cross-sectional correlation is caused by matrix $\bt_{2n}^{1/2}$ linearly
combining different rows of $\bx_n$, whereas the correlation over time is caused by matrix
$\bt_{2n}^{1/2}$ linearly combining different columns of $\bx_n$.

Another motivation of considering the sample covariance matrices $\bb_n$ is the matrix data.  Matrix observations are becoming increasingly available due to the rapid advance in the information
technology. For example, images are routinely stored as pixel by pixel data;
agricultural exports can be represented via matrices, one for each year, with rows
denoting for example different regions and columns different produces; the gene expression of a single subject can be
organized as a matrix with the rows for tissue types and the columns for genes.
%\citep{Yin:Li:2012}.
There is an abundance
of data that can be characterized as matrix variates in food sciences and chemometrics. In general, the sample covariance matrix of the matrix data
is
$$
\frac{1}{nm}\sum\limits_{k=1}^m\bbY_{nk}\bbY_{nk}',
$$
where $\bbY_{nk},k=1,\cdots,m$ are $n\times N$ matrix data. % that
%\[\Var(\vec(\bbX_k))=\Gamma,\]
%where $\Gamma$ is the $pq\times pq$ dimensional covariance matrix.
Some papers argued that for many
matrix variates, it is more appropriate to assume that
$$Cov(\vec(\bbY_{nk}))=\bt_{1n}\otimes \bt_{2n}.
$$
One may refer to \cite{Leng:Tang:2012} and the references therein. Here, the matrix data $\bbY_n$ defined in (\ref{a2}) is just one matrix observation for simplicity. Moreover, write
\begin{align*}
\frac{1}{nm}\sum\limits_{k=1}^m\bbY_{nk}\bbY_{nk}'=\frac{1}{nm}\bt_{2n}^{1/2}\left(\bx_{n1},\cdots,\bx_{nm}\right)\left(\bi_m\otimes\bt_{1n}\right)
\left(\bx_{n1},\cdots,\bx_{nm}\right)'\bt_{2n}^{1/2}.
\end{align*}
Let $\widetilde\bt_{1n}=\bi_m\otimes\bt_{1n}$ and $\widetilde\bx_n=\left(\bx_{n1},\cdots,\bx_{nm}\right)$. Then
\begin{align*}
\frac{1}{nm}\sum\limits_{k=1}^m\bbY_{nk}\bbY_{nk}'=\frac{N}{nm}\frac1N\bt_{2n}^{1/2}\widetilde\bx_n\widetilde\bt_{1n}
\widetilde\bx_n'\bt_{2n}^{1/2}
\end{align*}
which has the same form as $(\ref{a1})$ if $nm$ and $N$ are of the same order.

 %Nonetheless, we propose to deal with the very general assumption that $\bt_{1n}$ is Hermitian and $\bt_{2n}$ is nonnegative definite. It is obvious our matrix covers all others which we mention. More importantly, it includes many new matrices of considerable interest in statistical methods and applied areas. Hence, it is of significant value to develop spectral analysis for the general sample covariance matrix.

% If the rows
%indicate spatial coordinates and columns indicate time instances, then this
%model implies that spatial (dimensional) and temporal dependencies in the
%data are independent of each other.

For any hermitian matrix ${\bf A}$ of size $n\times n$ its empirical spectral distribution (ESD) is defined by
\begin{align*}
F^{{\bf A}}(x)=\frac1n\sum_{j=1}^nI(\lambda_j\le x),
\end{align*}
where $\{\lambda_j\}$ are eigenvalues of $\bf A$.
For $\bb_n$ defined in (\ref{a1}), a number of papers (\cite{MKV} and \cite{zlx}) investigated its empirical spectral
distribution $F_{\bb_n}$ and the weakest assumption is given in \cite{zlx}, which is specified below. To characterize its limit
%For general sample covariance matrices, the limiting spectral distribution has been found by . Before showing the result,
%it is necessary to introduce some definitions.
define the Stieltjes transform of any distribution function $F^{{\bf A}}(x)$ to be
$$m_{F^{{\bf A}}}(z)=\int\frac1{x-z}dF^{{\bf A}}(x)=\frac1n\rtr({\bf A}-z\bi)^{-1},\quad z\in\mathbb{C}^+.$$
Throughout the paper we make the following assumption.
\begin{condition}\label{def}
%We call $\bb_n=\frac1N\bt_{2n}^{1/2}\bx_n\bt_{1n}\bx_n'\bt_{2n}^{1/2}$ as general sample covariance matrix if
\begin{itemize}
\noindent\item[(i)] $\bx_n=(x_{jl})$ is $N\times n$ consisting of independent real random variables with $\re x_{jl}=0,\re x_{jl}^2=1$, %$\re x_{jl}^4=3$,
            satisfying for each $\delta>0$, as $n\to\infty$
             \begin{align*}
    \frac1{\delta^2nN}\sum_{j,l}\re\left(x_{jl}^2I\left(|x_{jl}|>\delta\sqrt n\right)\right)\to0.
    \end{align*}
   % \begin{align*}
%    \frac1{n^2}\sum_{j,l}\re\left(x_{jl}^4I\left(|x_{jl}|>\delta\sqrt n\right)\right)\to0.
%    \end{align*}
\item[(ii)] $\bt_{1n}$ is $n\times n$ real symmetric matrix (without loss of generality, we assume that $\bt_{1n}$ is not semi-negative definite) and $\bt_{2n}$ is $N\times N$ nonnegative definite real symmetric matrix. %And their spectral norms are bounded in $n$.
\item[(iii)] With probability 1, as $n\to\infty$, the empirical spectral distributions of $\bt_{1n}$ and $\bt_{2n}$, denoted by $H_{1n}$ and $H_{2n}$ respectively, converge weakly to two probability functions $H_1$ and $H_2$, respectively.
\item[(iv)] $N=N(n)$ with $n/N\to c>0$.
\item[(v)] $\bx_n,\bt_{1n},\bt_{2n}$ are independent.
\end{itemize}
\end{condition}

 L. X. Zhang \cite{zlx} establishes the following conclusion under Condition \ref{def}. %We are now in position to present the conclusion we mention early.
 For $\bb_n$ defined in (\ref{a1}), with probability $1$, as $n\to\infty$,  the ESD of $\bb_n$ converges weakly to a non-random probability distribution function $F$ for which if $H_1=1_{[0,\infty)}$ or $H_2=1_{[0,\infty)}$, then $F=1_{[0,\infty)}$; otherwise the Stieltjes transform $m(z)$ of $F$ is determined by the following system of equations (\ref{eqs}), where for each $z\in\mathbb{C}^+$,
\begin{align}\label{eqs}
\begin{cases}
s(z)=-z^{-1}(1-c)-z^{-1}c\int\frac1{1+q(z)x}dH_1(x)\\
s(z)=-z^{-1}\int\frac1{1+p(z)y}dH_2(y)\\
s(z)=-z^{-1}-p(z)q(z).
\end{cases}
\end{align}
%is viewed as a system of equations for the complex vector $\left(s(z),p(z),q(z)\right)$,
Then, the Stieltjes transfrom $m(z)$ of $F$, together with the two other functions, denoted by $g_1(z)$ and $g_2(z)$, $\left(m(z),g_1(z),g_2(z)\right)$ is the unique
solution to (\ref{eqs}) in the set
\begin{align*}
U=\bigg\{\left(s(z),p(z),q(z)\right):\Im s(z)>0,\Im (zp(z))>0,\Im q(z)>0\bigg\}
\end{align*}
where $\Im h(z)$ stands for the imaginary part of $h(z)$. Denote $\underline{\bf B}_n=\frac1N\bt_{1n}{\bx}_{n}'{\bf T}_{2n}{\bx}_{n}$. Then we have the following relationship between the empirical distributions of $\bb_n$ and $\underline{\bf B}_n$
\begin{align*}
F^{\bb_n}(x)=c_nF^{\underline{\bf B}_n}(x)+(1-c_n)I_{[0,\infty)}(x),
\end{align*}
and hence
\begin{align}\label{al1}
m_n(z)=c_n\underline m_n(z)+z^{-1}(c_n-1).
\end{align}
where $c_n=n/N$, $m_n(z)=m_{F^{\bb_n}}(z)$ and $\underline m_n(z)=m_{F^{\underline{\bf B}_n}}(z)$. Denote by $\underline F$ the limiting distribution of $F^{\underline{\bf B}_n}$. Then $F$ and $\underline F$ must satisfy
\begin{align*}
F(x)=c\underline F(x)+(1-c)I_{[0,\infty)}(x),
\end{align*}
and
\begin{align}\label{al3}
m(z)=c\underline m(z)-z^{-1}(1-c)
\end{align}
where $\underline m(z)=m_{\underline F}(z)$. If we let $F^{c,H_{1},H_{2}}$ denote $F$, then $F^{c_n,H_{1n},H_{2n}}$ is obtained from $F^{c,H_{1},H_{2}}$ with ${c,H_{1},H_{2}}$ replaced by $c_n,H_{1n},H_{2n}$ respectively. Let $m_n^0(z)=m_{F^{c_n,H_{1n},H_{2n}}}(z)$ for simplicity. Moreover $g_{1n}^0(z)$ and $g_{2n}^0(z)$ are similarly obtained from $g_1(z)$ and $g_2(z)$ respectively. Then $\left(m_n^0(z),g_{1n}^0(z),g_{2n}^0(z)\right)$ satisfies the equations (\ref{eqs}). In other words
\begin{align}
\underline m_n^0(z)=&-z^{-1}\int\frac1{1+g_{2n}^0(z)x}dH_{1n}(x)\label{i1}\\
m_n^0(z)=&-z^{-1}\int\frac1{1+g_{1n}^0(z)y}dH_{2n}(y)\label{i2}\\
m_n^0(z)=&-z^{-1}-g_{1n}^0(z)g_{2n}^0(z).\label{i3}
\end{align}
Furthermore,
\begin{align}
&zg_{1n}^0(z)=-c_n\int\frac x{1+g_{2n}^0(z)x}dH_{1n}(x)\label{i4}\\
&zg_{2n}^0(z)=-\int\frac y{1+g_{1n}^0(z)y}dH_{2n}(y)\label{i5}.
\end{align}

\cite{CH} further investigated the limiting spectral measure of $\bb_n$ and \cite{PS} proved that no eigenvalues exist outside the support of limiting empirical spectral
distribution of $\bb_n$. But \cite{PS} required $\bt_{2n}$ in $\bb_n$ to be diagonal (with positive diagonal entries). It is well known that many important statistics in multivariate analysis can be written as functionals of the ESD of some random matrices. %To solve the statistical problems more convenience,
 In view of this the aim of this paper is to establish the central limit theorem for linear spectral statistics (LSS) of $\bb_n$. LSS of general sample covariance matrices are quantities of the form
\begin{align*}
\frac1N\sum_{j=1}^Nf(\lambda_j^{\bb_n})=\int f(x)dF^{\bb_n}(x)
\end{align*}
where $f$ is some continuous and bounded real function on $(-\infty,\infty)$.

This paper is organized as follows. Section 2 establishes the main result about the CLT for LSS of $\bb_n$. By the Stieltjes transform method, we complete the proof of theorem when the entries of matrix are Gaussian variables in Section 3. Section 4 extends the result from the Gaussian case to the general case through comparing their characteristic functions. Some useful lemmas are listed in Appendix.

\section{Main result}

%The central limit theorem is established only for certain class of test functions. Let $f(\lambda)$ be a smooth function with the Fourier transform given by
%\begin{align*}
%\widehat f(t)=\frac1{2\pi}\int_{-\infty}^{\infty}f(\lambda)e^{-it\lambda}d\lambda.
%\end{align*}
%Define
%$$\mathcal{F}=\left\{f(\lambda):\int_{-\infty}^{\infty}\left(1+|t|^5\right)\left|\widehat f(t)\right|dt<\infty\right\}$$
%and
%$$G_n(x)=N\left[F^{\bb_n}(x)-F^{c_n,H_{1n},H_{2n}}(x)\right].$$
Define
$$
G_n(x)=N\Big(F^{\bb_n}(x)-F^{c_n,H_{1n},H_{2n}}\Big).
$$
The main result is stated in the following theorem.
\begin{thm}\label{th1}
%For general sample covariance matrix $\bb_n$ defined in (\ref{a1}),
Denote by $s_1\ge\cdots\ge s_n$ ($s_1> 0$) the eigenvalues of $\bt_{1n}$. Let $f_1,\cdots,f_{\kappa}$% belong to the class $\mathcal{F}$ and
be functions on $\mathbb{R}$ analytic on an open interval containing
\begin{align}\label{int}
\Bigg[&\liminf_ns_n\left(\lambda_{\min}^{\bt_{2n}}I_{(0,1)}(c)\left(1-\sqrt c\right)^2I(s_n\ge0)+\lambda_{\max}^{\bt_{2n}}\left(1+\sqrt c\right)^2I(s_n<0)\right),\notag\\
&\limsup_ns_1\left(\lambda_{\max}^{\bt_{2n}}\left(1+\sqrt c\right)^2\right)\Bigg].
\end{align}
In addition to Condition \ref{def} we further suppose that $\re x_{jl}^4=3$ and
            for each $\delta>0$, as $n\to\infty$
    \begin{align*}
    \frac1{\delta^4n^2}\sum_{j,l}\re\left(x_{jl}^4I\left(|x_{jl}|>\delta\sqrt n\right)\right)\to0.
    \end{align*}
    Also suppose that the spectral norms of $\bt_{1n}$ and $\bt_{2n}$ are both bounded in n.
Then
$$\left(\int f_1(x)dG_n(x),\cdots,\int f_{\kappa}(x)dG_n(x)\right)$$
converges weakly to a Gaussian vector $\left(X_{f_1},\cdots,X_{f_{\kappa}}\right)$ with mean
\begin{align}\label{ee}
\re X_{f}=&-\frac1{2\pi i}\oint_{\cC} f(z)\Bigg\{\left(d_1(z)-d_2(z)\right)\Bigg\{1-z^{-1}\left[\int\frac{x}{\left({1+xg_{2}(z)}\right)^2}dH_{1}(x)\right]^{-1}\\
&\times\int\frac{x^2}{\left({1+xg_{2}(z)}\right)^2}dH_{1}(x)\int\frac{t}
{\left({1+g_{1}(z)t}\right)^2}dH_{2}(t)\Bigg\}^{-1}\Bigg\}dz\notag
\end{align}
and covariance function
\begin{align}\label{cc}
{\rm Cov}\left( X_{f},X_g\right)=-\frac1{2\pi^2}\oint_{\cC_1}\oint_{\cC_2}\frac{\partial^2}{\partial z_2\partial z_1}\int_0^{f(z_1,z_2)}\frac{1}{1-z}dzdz_1dz_2
\end{align}
where $f,g\in\left\{f_1,\cdots,f_{\kappa}\right\}$. Here
\begin{align*}
 d_1(z)=&-cz^{-3}\int\frac{x^2}{\left({1+xg_{2}(z)}\right)^2}dH_{1}(x)\int\frac{t^2}{\left(g_{1}(z)t+1\right)^3}dH_{2}(t)\\
&\times\left[1-cz^{-2}\int\frac{x^2}{\left({1+xg_{2}(z)}\right)^2}dH_{1}(x)\int\frac{t^2}
{\left(g_{1}(z)t+1\right)^2}dH_{2}(t)\right]^{-1}\\
&-cz^{-4}\int\frac{x^3}{\left({1+xg_{2}(z)}\right)^3}dH_{1}(x)\int\frac{t}{\left(g_{1}(z)t+1\right)^2}dH_{2}(t)
\int\frac{t^2}{\left(g_{1}(z)t+1\right)^2}dH_{2}(t)\\
&\times\left[1-cz^{-2}\int\frac{x^2}{\left({1+xg_{2}(z)}\right)^2}dH_{1}(x)\int\frac{t^2}
{\left(g_{1}(z)t+1\right)^2}dH_{2}(t)\right]^{-1},
\end{align*}
\begin{align*}
d_2(z)=&{- c}z^{-4}\int\frac{x^2}{\left(1+xg_{2}(z)\right)^3}dH_{1}(x)\int\frac{t^2}{\left(g_{1}(z)t+1\right)^2}dH_{2}(t)\\
&\times\left[\int\frac{x}{\left({1+xg_{2}(z)}\right)^2}dH_{1}(x)\right]^{-1}
\int\frac{x^2}{\left({1+xg_{2}(z)}\right)^2}dH_{1}(x)\int\frac{t}
{\left({1+g_{1}(z)t}\right)^2}dH_{2}(t)\\
&\times\left[1-cz^{-2}\int\frac{x^2}{\left({1+xg_{2}(z)}\right)^2}dH_{1}(x)\int\frac{t^2}
{\left(g_{1}(z)t+1\right)^2}dH_{2}(t)\right]^{-1},
\end{align*}
and
\begin{align*}
f(z_1,z_2)=\frac1{z_1z_2}\frac{z_1g_1(z_1)-z_2g_1(z_2)}
{g_2(z_1)-g_2(z_2)}\frac{z_1g_2(z_1)-z_2g_2(z_2)}{g_1(z_1)-g_1(z_2)}.
\end{align*} The contours in (\ref{ee}) and (\ref{cc}) (two contours in (\ref{cc}), which we may assume to be nonoverlapping) are closed and are taken in the positive direction in the complex plane, each enclosing the support of $F^{c,H_1,H_2}$.
\end{thm}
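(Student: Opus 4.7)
The plan is to follow the Stieltjes transform approach pioneered by Bai--Silverstein (2004) for the simpler one-sided model $\bs_n$, suitably modified for the two-sided/separable structure of $\bb_n$. Begin with the standard truncation, centralization, and rescaling of the entries $x_{jl}$ at level $\eta_n\sqrt n$ with $\eta_n\downarrow0$, justified by the fourth-moment condition, so that we may henceforth assume $|x_{jl}|\le \eta_n\sqrt n$ with matched first four moments and bounded higher moments. Next, fix a simple closed contour $\cC$ strictly enclosing the support (\ref{int}) and staying in $\mathbb C\setminus\mathbb R$, and apply Cauchy's formula so that it suffices to establish, as a process on $\cC$, the weak convergence of
\begin{equation*}
M_n(z)\;=\;N\bigl(m_n(z)-m_n^0(z)\bigr)\;=\;c_n\,N\bigl(\underline m_n(z)-\underline m_n^0(z)\bigr)
\end{equation*}
to a tight Gaussian process, together with identification of its mean and covariance.

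I would split $M_n=M_n^{(1)}+M_n^{(2)}$ with $M_n^{(1)}(z)=N(m_n(z)-\re m_n(z))$ and $M_n^{(2)}(z)=N(\re m_n(z)-m_n^0(z))$, and deal with the Gaussian case first (Section 3 of the paper). For $M_n^{(1)}$, introduce the filtration $\mathcal F_k=\sigma(\bbx_{\cdot 1},\ldots,\bbx_{\cdot k})$ generated by the columns of $\bx_n$, write the telescoping martingale decomposition $M_n^{(1)}(z)=\sum_{k=1}^n(\re_k-\re_{k-1})\rtr(\bb_n-z\bi)^{-1}$, and apply the classical martingale CLT (Billingsley Thm.~35.12) after verifying (i) the Lindeberg condition through a Burkholder-type bound, and (ii) the convergence of the conditional quadratic variation. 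The Sherman--Morrison formula applied to $\bb_n-z\bi$ along the $k$-th column, combined with concentration for quadratic forms $\bbx_k'\bt_{2n}^{1/2}\cdots\bt_{2n}^{1/2}\bbx_k$ around their traces, reduces each martingale difference to a quadratic form involving two resolvents. The resulting covariance integrand, after repeatedly substituting the defining system (\ref{eqs}) and the identities (\ref{i1})--(\ref{i5}) to eliminate $m,g_1,g_2$ derivatives, should collapse to $\partial_{z_1}\partial_{z_2}\log(1-f(z_1,z_2))$ with the ratio $f(z_1,z_2)$ displayed in the theorem, yielding (\ref{cc}) after the contour integration.

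For the non-random bias $M_n^{(2)}$, I would expand $\re m_n(z)$ by iterating the resolvent identity $(\bb_n-z\bi)^{-1}=-z^{-1}\bi+z^{-1}(\bb_n-z\bi)^{-1}\bb_n$, together with the matrix version obtained by permuting $\bt_{1n}$ and $\bt_{2n}$, to produce perturbative equations for $\re \underline m_n(z)$ and the associated $\re g_{1n}(z),\re g_{2n}(z)$ analogues of (\ref{i1})--(\ref{i5}). Subtracting the deterministic system at $(m_n^0,g_{1n}^0,g_{2n}^0)$ and linearizing yields a $3\times3$ linear system for the leading $O(1/N)$ corrections whose inverse is exactly the Schur complement $\{1-cz^{-2}\int\cdots\int\cdots\}$ appearing in both $d_1$ and $d_2$. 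The two inhomogeneous driving terms $d_1(z),d_2(z)$ arise from the trace of $(\re_k-\re)(\bb_n-z\bi)^{-1}$ contributions of the diagonal and off-diagonal parts, respectively; collecting them and performing the Cauchy contour integral produces (\ref{ee}). Tightness of $M_n$ on $\cC$ is a routine consequence of uniform-in-$z$ moment bounds obtained along the way.

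Finally, to remove the Gaussian assumption (Section 4 of the paper), I would perform a Lindeberg--type comparison at the level of characteristic functions: pair the general entry $x_{jl}$ with an independent Gaussian $\tilde x_{jl}$ having matched first four moments, and swap one entry at a time, bounding the difference in $\re\exp(i\sum\theta_k M_n(z_k))$ via a fourth-order Taylor expansion in the swapped variable. The moment matching through order four, together with the fact that the fourth partial derivative of $M_n$ in a single entry is $O(N^{-1})$ uniformly on $\cC$, makes the total swap error $o(1)$. I expect the main obstacle to be Step~5: algebraically reducing the bias integrand to the compact form $d_1-d_2$ with the prefactor $\{1-z^{-1}(\int x(1+xg_2)^{-2}dH_1)^{-1}(\cdots)\}^{-1}$. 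The two-sided presence of $\bt_{1n}$ and $\bt_{2n}$ means that every resolvent manipulation produces cross traces in both $H_1$ and $H_2$, and the $3\times3$ linear system from (\ref{eqs}) rather than a single scalar equation must be inverted cleanly; getting this reduction to match the theorem's explicit formula, and verifying that the final integrals over $\cC$ are independent of the particular choice of contour enclosing the support, is where the bulk of the technical work will lie.
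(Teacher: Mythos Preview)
Your outline is essentially the Bai--Silverstein program that the paper follows, and the decomposition $M_n=M_n^{(1)}+M_n^{(2)}$, the martingale CLT for the random part, and the resolvent expansion for the deterministic bias are all exactly what the paper does. A few points of comparison are worth noting, however.

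First, a small slip: a closed contour enclosing a real interval cannot stay in $\mathbb C\setminus\mathbb R$. The paper takes $\cC$ to be a rectangle touching the real axis at $x_l,x_r$ and replaces $M_n$ near the axis by a truncated process $\widehat M_n$ (frozen at $\Im z=\pm n^{-1}\varepsilon_n$), then shows the replacement error is negligible using the eigenvalue tail bounds. You should make this explicit, since without it tightness and even boundedness of $M_n$ on $\cC$ are not available.

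Second, in the Gaussian step the paper exploits orthogonal invariance to diagonalize $\bt_{1n}$, writing $\bb_n=\frac1N\sum_k s_k\by_k\by_k'$ with $\by_k=\bt_{2n}^{1/2}\bbx_k$. This is what lets the column-by-column martingale decomposition work cleanly with a scalar weight $s_k$ per column; your proposal should use this or explain how to handle a nondiagonal $\bt_{1n}$ otherwise.

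Third, for removing the Gaussian assumption you propose an entry-by-entry Lindeberg swap with ``matched first four moments.'' The paper instead uses the continuous interpolation ${\bf W}_n(\theta)=\bx_n\sin\theta+{\bf Y}_n\cos\theta$ (after G\"otze--Naumov--Tikhomirov), differentiating the characteristic function in $\theta$. Note that you cannot match all first four moments with a Gaussian unless $\re x_{jl}^3=0$, which is not assumed; the paper's interpolation produces a third-moment term $\re w_{jk}^3\sin^2\theta\cos\theta$ that does not vanish but is shown to be $O(N^{-1/4})$ via a separate estimate (their Lemma~\ref{le4}). A straight Lindeberg swap would leave the same uncontrolled third-order Taylor term; either approach works, but you must handle that term explicitly rather than relying on moment matching.

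Finally, the bias $M_n^{(2)}$ in the paper is not obtained by inverting a $3\times3$ linearization of (\ref{eqs}) as you sketch, but by a direct resolvent identity comparing $\bd^{-1}(z)$ with a deterministic $\bw^{-1}(z)$, then iterating to isolate $d_{n1}(z)$ and $d_{n2}(z)$ and relating $\re g_{2n}-g_{2n}^0$ back to $\re\underline m_n-\underline m_n^0$. Your linear-system picture is morally the same and would likely produce the same answer, but the paper's route is more concrete and avoids having to identify all nine matrix entries of the Jacobian.
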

\begin{remark}
It is worth mentioning that our result is consistent with that in \cite{bai2004clt}. We distinguish two cases to show the consistency according to whether $\bt_{2n}$ or $\bt_{1n}$ reduces to the identity matrix.

When $\bt_{2n}=\bi$ and $\bt_{1n}$ is a nonnegative definite symmetric matrix, $\bb_n=\frac1N\bx_n\bt_{1n}\bx_n'$. Then (\ref{eqs}) is transformed into
\begin{align*}
\begin{cases}
\underline m(z)=-z^{-1}\int\frac1{1+m(z)x}dH_1(x)\\
g_1(z)=-\frac1{zm(z)}-1\\
g_2(z)=m(z).
\end{cases}
\end{align*}
It follows that
\begin{align*}
\re X_{f}=&-\frac1{2\pi i}\oint_{\cC} f(z)\int\frac{cm(z)^3x^2}{\left(1+xm(z)\right)^3}dH_1(x)\Bigg\{1-\int\frac{cm(z)^2x^2}{\left(1+xm(z)\right)^2}dH_1(x)\Bigg\}^{-2}dz
\end{align*}
and
\begin{align*}
f(z_1,z_2)=1+\frac{m(z_1)m(z_2)\left(z_1-z_2\right)}{m(z_2)-m(z_1)}.
\end{align*}
These are the same as those in \cite{bai2004clt}.

If $\bt_{1n}=\bi$ then $\bb_n=\frac1N\bt_{2n}^{1/2}\bx_n\bx_n'\bt_{2n}^{1/2}$. Let $\widetilde\bb_n=\frac1n\bt_{2n}^{1/2}\bx_n\bx_n'\bt_{2n}^{1/2}$ and $\underline{\widetilde\bb_n}=\frac1n\bx_n'\bt_{2n}\bx_n$. We use ${\widetilde m}_n(z)$ and $\underline{\widetilde m}_n(z)$ to denote the Stieltjes transforms of ${F^{{\widetilde\bb_n}}}$ and ${F^{\underline{\widetilde\bb_n}}}$ respectively. Denote by $\widetilde F^{c^{-1},H_2}$ the limiting distribution of $\widetilde F^{\widetilde\bb_n}$. Moreover $\widetilde F^{c_n^{-1},H_{2n}}$ is obtained from $\widetilde F^{c^{-1},H_2}$ with $c,H_2$ replaced by $c_n,H_{2n}$ respectively. Let ${\widetilde m}(z)=\lim_{n\to\infty}{\widetilde m}_n(z),\underline{\widetilde m}(z)=\lim_{n\to\infty}\underline{\widetilde m}_n(z)$ and $\underline{\widetilde m}_n^0(z)=m_{\widetilde F^{c_n^{-1},H_{2n}}}(z)$. Due to (\ref{al:4}) below we only need to consider the limiting distribution of $\widetilde M_n(z)=N[{\widetilde m}_n(z)-{\widetilde m}_n^0(z)]$. Firstly, (\ref{eqs}) becomes
\begin{align*}
\begin{cases}
 m(z)=-z^{-1}\int\frac1{1+c\underline m(z)x}dH_2(x)\\
g_1(z)=c\underline m(z)\\
g_2(z)=-\frac1{z\underline m(z)}-1
\end{cases}.
\end{align*}
By Lemma \ref{th2} below and the above equations, we have
\begin{align}\label{eq3}
\re M(z)=&\int\frac{c\underline m(z)^3x^2}{\left(1+cx\underline m(z)\right)^3}dH_2(x)\Bigg\{1-\int\frac{c\underline m(z)^2x^2}{\left(1+cx\underline m(z)\right)^2}dH_2(x)\Bigg\}^{-2}
\end{align}
and
\begin{align*}
f(z_1,z_2)=1+\frac{\underline m(z_1)\underline m(z_2)\left(z_1-z_2\right)}{\underline m(z_2)-\underline m(z_1)}.
\end{align*}
Note that $\bb_n=c_n\widetilde \bb_n$. It can be verified that
\begin{align*}
\underline {\widetilde m}_n(z/c_n)=c_n \underline m_n(z)
\end{align*}
and
\begin{align*}
 M_n(z)=c_n^{-1}\widetilde M_n(z/c_n).
\end{align*}
These imply that
\begin{align}\label{eq1}
\underline {\widetilde m}(z/c)=c \underline m(z)
\end{align}
and
\begin{align}\label{eq2}
 M(z)=c^{-1}\widetilde M(z/c)
\end{align}
where $\widetilde M(z)$ is a two-dimensional Gaussian process, the limit of weak convergence of $\widetilde M_n(z)$. Plugging (\ref{eq1}) and (\ref{eq2}) into (\ref{eq3}), one has
\begin{align*}
\re \widetilde M(z/c)=&c^{-1}\int\frac{\underline {\widetilde m}(z/c)^3x^2}{\left(1+x\underline {\widetilde m}(z/c)\right)^3}dH_2(x)\Bigg\{1-c^{-1}\int\frac{\underline {\widetilde m}(z/c)^2x^2}{\left(1+x\underline {\widetilde m}(z/c)\right)^2}dH_2(x)\Bigg\}^{-2}
\end{align*}
and
\begin{align*}
f(z_1,z_2)=1+\frac{\underline {\widetilde m}(z_1/c)\underline {\widetilde m}(z_2/c)\left(z_1/c-z_2/c\right)}{\underline {\widetilde m}(z_2/c)-\underline {\widetilde m}(z_1/c)}.
\end{align*}
Hence the expectation and covariance are the same as those in Bai and Siverstein (2004).
%Hence, in these two cases, the expressions of the expectation and covariance are the same as those in Bai and Siverstein (2004).
\end{remark}

By Cauchy's formula
\begin{align}\label{al:4}
\int f(x)dG(x)=-\frac1{2\pi i}\oint f(z)m_{G}(z)dz
\end{align}
where $G$ is a cumulative distribution function (c.d.f.) and $f$ is analytic on an open set containing the support of $G$. The complex integral on the right-hand side is over any positively oriented contour enclosing the support of $G$ and on which $f$ is analytic. Hence, the proof of Theorem \ref{th1} relies on establishing limiting results on
\begin{align*}
M_n(z)=N\left[m_n(z)-m_n^0(z)\right].
\end{align*}
The contour $\mathcal{C}$ is defined as follows.

By Condition \ref{def}, we may suppose $\max\left\{\left\|\bt_{1n}\right\|,\left\|\bt_{2n}\right\|\right\}\le\tau$.
Let $v_0$ be any positive number. Let $x_r$ be any positive number if the right end point of interval (\ref{int}) is zero. Otherwise choose
\begin{align*}
x_r\in
(\limsup_ns_1\lambda_{\max}^{\bt_{2n}}\left(1+\sqrt c\right)^2,\infty).
\end{align*}
Let $x_l$ be any negative number if the left end point of interval (\ref{int}) is zero. Otherwise choose
\begin{align*}
x_l\in
\begin{cases}
(0,\liminf_ns_n\lambda_{\min}^{\bt_{2n}}I_{(0,1)}(c)\left(1-\sqrt c\right)^2),&{\rm if} \ \liminf_ns_n\lambda_{\min}^{\bt_{2n}}I_{(0,1)}(c)>0,\\
(-\infty,\liminf_ns_n\lambda_{\max}^{\bt_{2n}}\left(1+\sqrt c\right)^2),&{\rm if} \ \liminf_ns_n\lambda_{\min}^{\bt_{2n}}I_{(0,1)}(c)\le 0.
\end{cases}
\end{align*}
Let
\begin{align*}
\mathcal{C}_u=\left\{x+iv_0:x\in[x_l,x_r]\right\}.
\end{align*}
Define the contour $\mathcal{C}$
\begin{align*}
\mathcal{C}=\left\{x_l+iv:v\in[0,v_0]\right\}\cup\mathcal{C}_u\cup\left\{x_r+iv:v\in[0,v_0]\right\}.
\end{align*}

To avoid dealing with the small $\Im z$, we truncate $M_n(z)$ on a contour $\mathcal{C}$ of the complex plane. We define now the subsets $\mathcal{C}_n$ of $\mathcal{C}$ on which $M_n(\cdot)$ agrees with $\widehat M_n(\cdot)$. Choose sequence $\{\varepsilon_n\}$ decreasing to zero satisfying for some $\alpha\in(0,1)$
\begin{align*}
\varepsilon_n\ge n^{-\alpha}.
\end{align*}
Let
\begin{align*}
\mathcal{C}_l=\left\{x_l+iv:v\in[n^{-1}\varepsilon_n,v_0]\right\}\quad{\rm and}\quad
\mathcal{C}_r=\left\{x_r+iv:v\in[n^{-1}\varepsilon_n,v_0]\right\}.
\end{align*}
Then $\mathcal{C}_n=\mathcal{C}_l\cup\mathcal{C}_u\cup\mathcal{C}_r$. For $z=x+iv$, the process $\widehat M_n(\cdot)$ can now be defined as
\begin{align}\label{aa}
\widehat M_n(\cdot)=
\begin{cases}
M_n(z),&{\rm for} \ z\in\mathcal{C}_n,\\
M_n(x_l+in^{-1}\varepsilon_n),& {\rm for} \ x=x_l,v\in[0,n^{-1}\varepsilon_n],\\
M_n(x_r+in^{-1}\varepsilon_n),& {\rm for} \ x=x_r,v\in[0,n^{-1}\varepsilon_n].
\end{cases}
\end{align}

%We will complete the proof of Theorem \ref{th1} through the following lemma.
The central limit theorem of $\widehat M_n(z)$ is specified below.
\begin{lemma}\label{th2}
Under the conditions of Theorem \ref{th1}, $\widehat M_n(z)$ converges weakly to a two-dimensional Gaussian process $M(\cdot)$ satisfying for $z\in\mathcal{C}$
\begin{align*}
\re M(z)=&\left(d_1(z)-d_2(z)\right)\Bigg\{1-z^{-1}\left[\int\frac{x}{\left({1+xg_{2}(z)}\right)^2}dH_{1}(x)\right]^{-1}\\
&\times\int\frac{x^2}{\left({1+xg_{2}(z)}\right)^2}dH_{1}(x)\int\frac{t}
{\left({1+g_{1}(z)t}\right)^2}dH_{2}(t)\Bigg\}^{-1}
\end{align*}
and for $z_1,z_2\in \mathcal{C}\cup\overline{\mathcal{C}}$ with $\overline{\mathcal{C}}=\{\bar z:z\in\mathcal{C}\}$,
\begin{align*}
{\rm Cov}\left( M(z_1),M(z_2)\right)=2\frac{\partial^2}{\partial z_2\partial z_1}\int_0^{f(z_1,z_2)}\frac{1}{1-z}dz.
\end{align*}

\end{lemma}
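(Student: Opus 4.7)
The plan is to establish Lemma \ref{th2} following the two-stage strategy outlined in the introduction: first prove the CLT when the entries of $\bx_n$ are Gaussian using Stieltjes-transform techniques, then extend to the general case with $\re x_{jl}^4=3$ by a characteristic-function comparison. The main decomposition is $\widehat M_n(z) = M_n^{(1)}(z) + M_n^{(2)}(z)$ where $M_n^{(1)}(z) = N[m_n(z)-\re m_n(z)]$ is the centered random fluctuation and $M_n^{(2)}(z) = N[\re m_n(z)-m_n^0(z)]$ is the deterministic bias. Before decomposing, I would truncate and rescale the entries $x_{jl}$ at level $\delta_n\sqrt n$ with $\delta_n\downarrow 0$, justified by the uniform fourth-moment tail hypothesis, so that without loss of generality the entries are uniformly bounded on the event under consideration.

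For the centered process $M_n^{(1)}(z)$, I would apply a martingale difference decomposition with respect to the filtration $\mathcal F_k=\sigma(\bbx_{\cdot 1},\ldots,\bbx_{\cdot k})$ generated by the columns of $\bx_n$:
\begin{align*}
m_n(z)-\re m_n(z) = \sum_{k=1}^n (\re_k-\re_{k-1})\tfrac1N\rtr(\bb_n-z\bi)^{-1}.
\end{align*}
After orthogonally diagonalizing $\bt_{1n}=\bf U \bf D \bf U'$ and absorbing $\bf U$ into $\bx_n$ (whose distribution is invariant under column rotations up to a subsequent conditioning), the $k$-th martingale difference reduces to a functional of a rank-one update $t_{1n,k}\bf r_k\bf r_k'$ with $\bf r_k = N^{-1/2}\bt_{2n}^{1/2}\bbx_{\cdot k}$. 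Using the Sherman--Morrison identity together with standard trace- and quadratic-form concentration bounds, and noting that the assumption $\re x_{jl}^4=3$ eliminates the non-Gaussian correction that would otherwise enter the conditional variance, the sum of conditional variances can be identified as converging to the kernel in \eqref{cc}. A Lindeberg check on the martingale differences, followed by the Billingsley martingale CLT, then yields finite-dimensional convergence.

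For the bias $M_n^{(2)}(z)$, I would exploit the fact that appropriately defined averaged Stieltjes transforms $(\re m_n(z), \re g_{1n}(z), \re g_{2n}(z))$ satisfy a perturbed version of the system \eqref{i1}--\eqref{i3}, differing from the exact solution $(m_n^0, g_{1n}^0, g_{2n}^0)$ by remainder terms that are $O(N^{-1})$ and can be computed explicitly from the Gaussian-type identities. Linearizing the three coupled equations around $(m_n^0,g_{1n}^0,g_{2n}^0)$ produces a $3\times 3$ linear system (effectively $2\times 2$ after using \eqref{i3}) whose inversion yields both the two numerator contributions $d_1(z)$ and $d_2(z)$ and the outer factor $\{1-z^{-1}[\int x/(1+xg_2)^2 dH_1]^{-1}\int x^2/(1+xg_2)^2 dH_1\int t/(1+g_1 t)^2 dH_2\}^{-1}$, which is precisely the reciprocal of the determinant of the linearization. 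Tightness of $\widehat M_n$ on the truncated contour $\mathcal C_n$ then follows from uniform resolvent bounds (valid because $\Im z\ge n^{-1}\varepsilon_n$ on $\mathcal C_n$) combined with the Arzelà--Ascoli criterion, as in Bai--Silverstein (2004).

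The hard part is that the separable model \eqref{a1} is governed by the coupled three-variable system \eqref{eqs} rather than a single fixed-point equation, so all of the quadratic-form asymptotics, the covariance identification, and the bias expansion must be tracked simultaneously in three interacting unknowns $(m, g_1, g_2)$. In particular, the bias computation requires carefully inverting the linearized system and then recognizing its determinant as the outer factor of $\re M(z)$; that algebraic simplification, together with verifying its consistency with the $\bt_{1n}=\bi$ and $\bt_{2n}=\bi$ special cases shown in the Remark, is the most delicate step. Once the Gaussian case is in hand, the extension to general entries with matching second and fourth moments follows by a Lindeberg-style interpolation at the level of the characteristic functions of $(\int f_i\,dG_n)_{i=1}^\kappa$, where the matching of fourth moments ensures the vanishing of the leading correction and leaves only a negligible error.
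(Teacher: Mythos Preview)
Your proposal is correct and follows essentially the same approach as the paper: the Gaussian case is handled via the column-wise martingale decomposition after diagonalizing $\bt_{1n}$ (using rotation invariance of Gaussian $\bx_n$), the covariance kernel is identified through the trace $\rtr(\bt_{2n}\re_k\bd_k^{-1}(z_1)\bt_{2n}\re_k\bd_k^{-1}(z_2))$, the bias $M_n^{(2)}$ is computed by expanding the resolvent equations around $(m_n^0,g_{1n}^0,g_{2n}^0)$, tightness is established as in Bai--Silverstein (2004), and the general case is obtained by a Lindeberg-type interpolation of characteristic functions using the matching of the first four moments. The only cosmetic difference is that the paper treats the bias computation by direct resolvent expansion rather than explicitly framing it as inversion of a linearized $3\times 3$ system, but the underlying algebra and the identification of the outer factor as the reciprocal of the relevant determinant are the same.
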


\begin{proof}[Proof of Theorem \ref{th1}]
From \cite{Yin1988} and \cite{BaiYin1993}, we conclude that
\begin{align}\label{al7}
 \lambda_{\max}\left(\frac1N\bx_n'\bx_n\right)\to \left(1+\sqrt c\right)^2\quad{\rm a.s.}
\end{align}
and
\begin{align*}
 \lambda_{\min}\left(\frac1N\bx_n'\bx_n\right)\to \left(1-\sqrt c\right)^2\quad{\rm a.s.}
\end{align*}
The upper and lower bounds of the extreme eigenvalues of $\bb_n$ depends largely on the signs of $s_1$ and $s_n$.  Since $s_1>0$, we have
\begin{align*}
\lambda_{\max}(\bb_n)\le s_1 \lambda_{\max}^{\bt_{2n}} \lambda_{\max}\left(\frac1N\bx_n'\bx_n\right)\le s_1 \lambda_{\max}^{\bt_{2n}} \left(1+\sqrt c\right)^2\quad{\rm a.s.}
\end{align*}
%If $s_1<0$, we get
%\begin{align*}
%\lambda_{\max}(\bb_n)\le s_1 \lambda_{\min}^{\bt_{2n}}I_{(0,1)}(c) \lambda_{\min}\left(\frac1N\bx_n'\bx_n\right)\le s_1 \lambda_{\min}^{\bt_{2n}}I_{(0,1)}(c) \left(1-\sqrt c\right)^2\quad{\rm a.s.}
%\end{align*}
If $s_n>0$, then we have%\lambda_{\min}^{\bt_{2n}}
\begin{align*}
\lambda_{\min}(\bb_n)\ge s_n \lambda_{\min}^{\bt_{2n}} I_{(0,1)}(c)\lambda_{\min}\left(\frac1N\bx_n'\bx_n\right)\ge s_n \lambda_{\min}^{\bt_{2n}} I_{(0,1)}(c)\left(1-\sqrt c\right)^2\quad{\rm a.s.}
\end{align*}
Otherwise, we get
\begin{align*}
\lambda_{\min}(\bb_n)\ge s_n \lambda_{\max}^{\bt_{2n}} \lambda_{\max}\left(\frac1N\bx_n'\bx_n\right)\ge s_n \lambda_{\max}^{\bt_{2n}} \left(1+\sqrt c\right)^2\quad{\rm a.s.}
\end{align*}
Combining the definitions of $x_l,x_r$, we find with probability $1$
\begin{align*}
\liminf_{n\to\infty}\min\left(x_r-\lambda_{\max}(\bb_n),\lambda_{\min}(\bb_n)-x_l\right)>0.
\end{align*}
Since $F^{\bb_n}\to F^{c,H_1.H_2}$ with probability $1$ the support of $F^{c_n,H_{1n},H_{2n}}$ is contained in interval (\ref{int}) with probability 1. Thus, by (\ref{al:4}), for $f\in\{f_1,\cdots,f_{\kappa}\}$ and large $n$, with probability $1$,
\begin{align*}
\int f(x)dG_n(x)=-\frac1{2\pi i}\oint f(z)M_{n}(z)dz
\end{align*}
where the complex integral is over ${\mathcal{C}\cup\overline{\mathcal{C}}}$. For $v\in[0,n^{-1}\varepsilon_n]$, note that
\begin{align*}
\left|M_n(x_r+iv)-M_n(x_r+in^{-1}\varepsilon_n)\right|
\le 4n\left|\max\left(\lambda_{\max}(\bb_n), e_r\right)-x_r\right|^{-1}
\end{align*}
and
\begin{align*}
\left|M_n(x_l+iv)-M_n(x_l+in^{-1}\varepsilon_n)\right|
\le 4n\left|\min\left(\lambda_{\min}(\bb_n), e_l\right)-x_l\right|^{-1}.
\end{align*}
It follows that for large $n$, with probability $1$,
\begin{align*}
&\left|\oint f(z)\left(M_{n}(z)-\widehat M_n(z)\right)dz\right|\\
\le &8K\varepsilon_n\left[\left|\max\left(\lambda_{\max}(\bb_n), e_r\right)-x_r\right|^{-1}+\left|\min\left(\lambda_{\min}(\bb_n), e_l\right)-x_l\right|^{-1}\right]\to 0
\end{align*}
where $e_l \ (e_r)$ is the left endpoint (right endpoint) of interval (\ref{int}) and $K$ is the bound on $f$ over $\mathcal{C}$.

Note that the mapping
\begin{align*}
\widehat M_n(\cdot)\rightarrow \left(-\frac1{2\pi i}\oint f_1(z)\widehat M_n(z)dz,\cdots,-\frac1{2\pi i}\oint f_{\kappa}(z)\widehat M_n(z)dz\right)
\end{align*}
is continuous. Using Lemma \ref{th2}, we complete the proof of Theorem \ref{th1}.
\end{proof}

\section{The Gaussian case}

This section is to prove Lemma \ref{th2} under the Gaussian case, i.e., $\{x_{jk}\},j=1,\cdots,N,k=1,\cdots,n$ are standard normal random variables. Since $\bt_{1n}$ is symmetric there exists an orthogonal matrix ${\bf U}$ such that
$$\bt_{1n}={\bf U}{\rm diag}\left(s_1,\cdots,s_n\right){\bf U}'.$$
Note that ${\bf X}_n$ has the same distribution as ${\bf X}_n{\bf U}$. It then suffices to consider
$$\widetilde{\bf B}_n=\frac1N\sum_{k=1}^ns_{k}\bt_{2n}^{1/2}{\bf x}_k{\bf x}_k'\bt_{2n}^{1/2}\triangleq \frac1N\sum_{k=1}^ns_{k}\by_k\by_k'$$
where ${\bf x}_k$ is the $k$-th column of $\bx_n$. In what follows, we omit the symbol $\widetilde{\cdot}$ from the notation of $\widetilde{\bf B}_n$ in order to simplify notation. Rewrite for $z\in\mathcal{C}_n$
\begin{align*}
M_n(z)=N[m_n(z)-\re m_n(z)]+N[\re m_n(z)-m_n^0(z)]\triangleq M_{n1}(z)+M_{n2}(z).
\end{align*}
We below consider the random part $M_{n1}(z)$ and the nonrandom part $M_{n2}(z)$ separately to complete the proof of Lemma \ref{th2}.

In the sequel we assume $x_{jk},j=1,\cdots,N,k=1,\cdots,n$ are truncated at $\delta_n\sqrt n$, centralized and re-normalized. The details are omitted which is similar to Bai and Silverstein (2004).

We start with two probability inequalities for extreme eigenvalues of $\bb_n$. It is well known (see \cite{Yin1988},\cite{bai2004clt}) that for any $l$, $\eta_1>(1+\sqrt c)^2$ and $\eta_2<(1-\sqrt c)^2$
\begin{align*}
{\rm P}\left(\lambda_{\max}\left(\frac1N\bx_n'\bx_n\right)\ge\eta_1\right)=o(n^{-l})
\end{align*}
and
\begin{align*}
{\rm P}\left(\lambda_{\min}\left(\frac1N\bx_n'\bx_n\right)\le\eta_2\right)=o(n^{-l}).
\end{align*}
Thus, letting
\begin{align*}
\eta_r\in
\begin{cases}
(0,x_r),&c\ge1,\\
(\limsup_{n}s_1\lambda_{\max}^{\bt_{2n}}\left(1+\sqrt c\right)^2,x_r),& {\rm otherwise},
%(\limsup_{n}s_1\lambda_{\min}^{\bt_{2n}}I_{(0,1)}(c)\left(1-\sqrt c\right)^2,x_r),&{\rm if} \ s_1\le0.{\rm if} \ s_1>0,\\
\end{cases}
\end{align*}
we have for any $l>0$
\begin{align}\label{max}
{\rm P}\left(\lambda_{\max}\left(\bb_n\right)\ge\eta_r\right)=o(n^{-l}).
\end{align}
Likewise, we have
\begin{align}\label{min}
{\rm P}\left(\lambda_{\min}\left(\bb_n\right)\le\eta_l\right)=o(n^{-l}).
\end{align}
where
\begin{align*}
\eta_l\in
\begin{cases}
(x_l ,0),&c\ge1,\\
(x_l,\liminf_ns_n\lambda_{\min}^{\bt_{2n}}I_{(0,1)}(c)\left(1-\sqrt c\right)^2),&{\rm if} \ \liminf_ns_n\lambda_{\min}^{\bt_{2n}}I_{(0,1)}(c)>0,\\
(x_l,\liminf_ns_n\lambda_{\max}^{\bt_{2n}}\left(1+\sqrt c\right)^2),&{\rm if} \ \liminf_ns_n\lambda_{\min}^{\bt_{2n}}I_{(0,1)}(c)\le0.
\end{cases}
\end{align*}
Here $\eta_l,\eta_r,x_l,x_r$ can be chosen such that
\begin{align}\label{eq18}
x_r-\eta_r>2\tau^2\quad {\rm and} \quad\eta_l-x_l>2\tau^2,
\end{align}
where $\tau$ are the upper bound of the spectral norms of $\bt_{1n}$ and $\bt_{2n}$ defined before.

\subsection{The limiting distribution of $M_{n1}(z)$}

The aim of this part is to find the limiting distribution of $M_{n1}(z)$. That is to say, we show for any positive integer $r$, the sum
  $$\sum_{j=1}^r\alpha_jM_{n1}(z_j) \qquad\Im z_j\neq0$$
converges in distribution to a Gaussian random variable.  We will use the central limit theorem for martingale difference sequences to accomplish the goal. Since
\begin{align*}
\lim_{v_0\downarrow 0}\limsup_{n\to\infty}\re\left|\int_{\mathcal{C}_l\cup\mathcal{C}_r}f(z)M_{n1}(z)dz\right|^2\to0,
\end{align*}
 it suffices to consider $z=u+iv_0\in \mathcal{C}_u$. Introduce
\begin{align}
&\bd(z)=\bb_n(z)-z\bi_N, \ \bd_k(z)=\bd(z)-\frac1Ns_k\by_k\by_k', \notag\\
 &\bb_{nk}=\bb_n-\frac1Ns_k\by_k\by_k', \ g_{2n}(z)=\frac1N\rtr(\bd^{-1}(z)\bt_{2n}),\label{eq4}
\end{align} and
\begin{align}
&\varepsilon_k(z)=\by_k'\bd_k^{-1}(z){\by}_k-\rtr(\bd_k^{-1}(z)\bt_{2n}), \ \gamma_k(z)={\by}_k'\bd_k^{-2}(z){\by}_k-\rtr(\bd_k^{-2}(z)\bt_{2n})\notag\\
&\beta_k(z)=\frac1{1+N^{-1}s_k\by_k'\bd_k^{-1}(z)\by_k}, \ \widetilde{\beta}_k(z)=\frac1{1+N^{-1}s_k\rtr(\bd_k^{-1}(z)\bt_{2n})},\label{eq5}\\
&b_{k}(z)=\frac1{1+N^{-1}s_k\re\rtr(\bd_k^{-1}(z)\bt_{2n})}, \ \psi_k(z)=\frac1{{1+s_k\re g_{2n}(z)}}\label{eq6}.
\end{align}

Note that
\begin{align*}
m_n(z)=\frac1N\rtr\left(\bb_n(z)-z\bi_N\right)^{-1}\triangleq \frac1N\rtr \bd^{-1}(z).
\end{align*}
Let ${\rm E}_{0}(\cdot)$ denote mathematical expectation and ${\rm E}_{k}(\cdot)$ denote conditional expectation with respect to the $\sigma$-field given by ${\bf x}_1,\cdots,{\bf x}_k.$ By the formula
\begin{align}\label{al2}
\left(\boldsymbol\Sigma+q\boldsymbol{\alpha\beta}'\right)^{-1}=\boldsymbol\Sigma^{-1}-\frac{q\boldsymbol\Sigma^{-1}\boldsymbol{\alpha\beta}'\boldsymbol\Sigma^{-1}}
{1+q\boldsymbol{\beta}'
\boldsymbol\Sigma^{-1}\boldsymbol\alpha},
\end{align} we have
\begin{align}
M_{n1}(z)
=&\sum_{k=1}^n\rtr\left\{\re_{k}\bd^{-1}(z)-\re_{k-1}\bd^{-1}(z)\right\}
=\sum_{k=1}^n\left({\rm E}_{k}-{\rm E}_{k-1}\right){\rm tr}\left[\bd(z)^{-1}-\bd_k^{-1}(z)\right]\notag\\
=&-\frac1N\sum_{k=1}^n\left({\rm E}_{k}-{\rm E}_{k-1}\right)s_k\beta_k(z)\by_k'\bd_k^{-2}(z)\by_k\notag\\
=&-\frac1N\sum_{k=1}^n\left({\rm E}_{k}-{\rm E}_{k-1}\right)s_k\beta_k(z)\gamma_k(z)-\frac1N\sum_{k=1}^n\left({\rm E}_{k}-{\rm E}_{k-1}\right)s_k\beta_k(z)\rtr(\bd_k^{-2}(z)\bt_{2n})\notag\\
\triangleq&\mathcal{I}_1+\mathcal{I}_2.\label{eq12}
\end{align}
From the identity
\begin{align}\label{al10}
\beta_k(z)-\widetilde{\beta}_k(z)=-\frac1N s_k \widetilde{\beta}_k(z)\beta_k(z)\varepsilon_k(z),
\end{align}
we have
\begin{align*}
\mathcal{I}_1=&-\frac1N\sum_{k=1}^n{\rm E}_{k}s_k\widetilde{\beta}_k(z)\gamma_k(z)+\frac1{N^2}\sum_{k=1}^n\left({\rm E}_{k}-{\rm E}_{k-1}\right)s_k^2 \widetilde{\beta}_k(z)\beta_k(z) \varepsilon_k(z)\gamma_k(z).
\end{align*}
By Lemma \ref{add} and Lemma \ref{lep1}
\begin{align*}
&\frac1{N^4}\sum_{k=1}^n\re|\left({\rm E}_{k}-{\rm E}_{k-1}\right)s_k^2 \widetilde{\beta}_k(z)\beta_k(z) \varepsilon_k(z)\gamma_k(z)|^2\\
\le&\frac C{N^4}\sum_{k=1}^n\re^{1/2}|\widetilde{\beta}_k(z)\beta_k(z)|^2\re^{1/2}|\varepsilon_k(z)\gamma_k(z)|^4\\
\le&\frac C{N^4}\sum_{k=1}^n\re^{1/4}|\varepsilon_k(z)|^8\re^{1/4}|\gamma_k(z)|^8\le\frac C{N}\to0.
\end{align*}
This implies
\begin{align}\label{eq10}
\mathcal{I}_1=-\frac1N\sum_{k=1}^n{\rm E}_{k}s_k\widetilde{\beta}_k(z)\gamma_k(z)+o_p(1).
\end{align}
 Using the same argument and
\begin{align}\label{al11}
\beta_k(z)-\widetilde{\beta}_k(z)=-\frac1N s_k \widetilde{\beta}_k^2(z)\varepsilon_k(z)+\frac1{N^2}s_k^2\beta_k(z)\widetilde{\beta}_k^2(z)\varepsilon_k^2(z),
\end{align}
 one gets
\begin{align}\label{eq11}
\mathcal{I}_2=\frac1{N^2}\sum_{k=1}^n{\rm E}_{k}s_k^2\widetilde{\beta}_k^2(z)\varepsilon_k(z)\rtr(\bd_k^{-2}(z)\bt_{2n})+o_p(1).
\end{align}
From (\ref{eq12}), (\ref{eq10}), and (\ref{eq11}), we conclude that
\begin{align*}
M_{n1}(z)=&-\frac1N\sum_{k=1}^n{\rm E}_{k}s_k\widetilde{\beta}_k(z)\gamma_k(z)+\frac1{N^2}\sum_{k=1}^n{\rm E}_{k}s_k^2\widetilde{\beta}_k^2(z)\varepsilon_k(z)\rtr(\bd_k^{-2}(z)\bt_{2n})+o_p(1).
\end{align*}
Define
\begin{align*}
h_k(z)=&-\frac1N{\rm E}_{k}s_k\widetilde{\beta}_k(z)\gamma_k(z)+\frac1{N^2}{\rm E}_{k}s_k^2\widetilde{\beta}_k^2(z)\varepsilon_k(z)\rtr(\bd_k^{-2}(z)\bt_{2n})\\
=&-N^{-1}\frac{d}{dz}{\rm E}_{k}s_k\widetilde{\beta}_k(z)\varepsilon_k(z).
\end{align*}
Thus we only need to prove that $\sum_{j=1}^r\alpha_j\sum_{k=1}^nh_k(z_j)=\sum_{k=1}^n\sum_{j=1}^r\alpha_jh_k(z_j)$ converges in distribution to a Gaussian random variable. By Lemma \ref{lep2}, it suffices to verify condition $(i)$ and $(ii)$. It follows from Lemma \ref{add} and Lemma \ref{lep1} that
\begin{align*}
\sum_{k=1}^n\re\left|\sum_{j=1}^r\alpha_jh_k(z_j)\right|^4\le&\frac C{N^4}\sum_{k=1}^n\sum_{j=1}^r\alpha_j^4\bigg[\re^{1/2}|\widetilde{\beta}_k|^8\re^{1/2}|\gamma_k(z_j)|^8\\
&+\re^{1/2}|\widetilde{\beta}_k|^{16}
\re^{1/2}|\varepsilon_k(z_j)|^8\bigg]\le\frac C N\to0
\end{align*}
which implies that conditions $(ii)$ of Lemma \ref{lep2} is satisfied.

The next aim is to find a limit in probability of $$\Phi(z_1,z_2)\triangleq\sum_{k=1}^n\re_{k-1}\left[h_k(z_1)h_k(z_2)\right]$$
 for $z_1,z_2$ with nonzero fixed imaginary parts. It is obvious that
\begin{align*}
\Phi(z_1,z_2)=N^{-2}\frac{\partial^2}{\partial z_2\partial z_1}\sum_{k=1}^n\re_{k-1}\left[\re_k\left(s_k\widetilde{\beta}_k(z_1)\varepsilon_k(z_1)\right)\re_k
\left(s_k\widetilde{\beta}_k(z_2)\varepsilon_k(z_2)\right)\right].
\end{align*}
Due to the analysis on page 571 in \cite{bai2004clt}, it is enough to prove that
$$N^{-2}\sum_{k=1}^n s_k^2\re_{k-1}\left[\re_k\left(\widetilde{\beta}_k(z_1)\varepsilon_k(z_1)\right)\re_k\left(\widetilde{\beta}_k(z_2)\varepsilon_k(z_2)\right)\right]$$
converges in probability to a constant. Similar to (\ref{eq9}) in the appendix, it can be verified that $|\widetilde{\beta}_k(z)|$ and $|b_{k}(z)|$ has the same bound as $\beta_k(z)$. Combining (\ref{max}), (\ref{min}), with (\ref{al2}), we have for $l=1,2$ and suitably large $t$
\begin{align}
&\re|\widetilde{\beta}_k(z_l)-b_{k}(z_l)|^2\label{eq13}\\
\le&\re|\widetilde{\beta}_k(z_l)-b_{k}(z_l)|^2
I(\eta_l\le\lambda_{\min}\le\lambda_{\max}\le\eta_r)+\re|\widetilde{\beta}_k(z_l)-b_{k}(z_l)|^2\notag\\
&\times I(\lambda_{\min}<\eta_l \ {\rm or} \ \lambda_{\max}>\eta_r)\notag\\
\le&\frac C{N^2}\re|\rtr(\bd_k^{-1}(z_l)\bt_{2n})-\re\rtr(\bd_k^{-1}(z_l)\bt_{2n})|^2\notag\\
&+C{N}\re|\rtr(\bd_k^{-1}(z_l)\bt_{2n})-\re\rtr(\bd_k^{-1}(z_l)\bt_{2n})|^2I(\lambda_{\min}<\eta_l \ {\rm or} \ \lambda_{\max}>\eta_r)\notag\\
\le&\frac C{N^2}\re|\sum_{j=1,j\neq k}^n(\re_j-\re_{j-1})\left[\rtr(\bd_k^{-1}(z_l))-\rtr(\bd_{kj}^{-1}(z_l))\right]|^2\notag\\
&+C{N}\re|\sum_{j=1,j\neq k}^n(\re_j-\re_{j-1})\left[\rtr(\bd_k^{-1}(z_l))-\rtr(\bd_{kj}^{-1}(z_l))\right]|^2I(\lambda_{\min}<\eta_l \ {\rm or} \ \lambda_{\max}>\eta_r)\notag\\
\le&\frac C{N^2}\sum_{j=1,j\neq k}^n\re|\rtr(\bd_k^{-1}(z_l))-\rtr(\bd_{kj}^{-1}(z_l))|^2\notag\\
&+C{N^2}\sum_{j=1,j\neq k}^n\re|\rtr(\bd_k^{-1}(z_l))-\rtr(\bd_{kj}^{-1}(z_l))|^2I(\lambda_{\min}<\eta_l \ {\rm or} \ \lambda_{\max}>\eta_r)\notag\\
=&\frac C{N^2}\sum_{j=1,j\neq k}^n\re|\frac{s_j\by_j'\bd_{kj}^{-2}(z_l)\by_j}{N(1+N^{-1}s_j\by_j'\bd_{kj}^{-1}(z_l)\by_j)}|^2\notag\\
&+C{N^2}\sum_{j=1,j\neq k}^n\re|\frac{s_j\by_j'\bd_{kj}^{-2}(z_l)\by_j}{N(1+N^{-1}s_j\by_j'\bd_{kj}^{-1}(z_l)\by_j)}|^2I(\lambda_{\min}<\eta_l \ {\rm or} \ \lambda_{\max}>\eta_r)\notag\\
\le&\frac CN+CN^3{\rm P}(\lambda_{\min}<\eta_l \ {\rm or} \ \lambda_{\max}>\eta_r)\le\frac CN+CN^3n^{-t}\to0\notag
\end{align}
where $\bd_{kj}(z)=\bd_j(z)-\frac1Ns_j\by_j\by_j'$.  From the above inequality, we get
\begin{align*}
&\re\Bigg|N^{-2}\sum_{k=1}^n s_k^2\re_{k-1}\bigg[\re_k\left(\widetilde\beta_k(z_1)\varepsilon_k(z_1)\right)\re_k\left(\widetilde\beta_k(z_2)\varepsilon_k(z_2)\right)\\
&-\re_k\left(b_k(z_1)\varepsilon_k(z_1)\right)\re_k\left(b_k(z_2)\varepsilon_k(z_2)\right)\bigg]\Bigg|\\
\le&CN^{-2}\sum_{k=1}^n\bigg[\re|\re_k\left((\widetilde\beta_k(z_1)-b_k(z_1))\varepsilon_k(z_1)\right)\re_k\left(\widetilde\beta_k(z_2)\varepsilon_k(z_2)\right)|\\
&+\re|\re_k\left(b_k(z_1)\varepsilon_k(z_1)\right)\re_k\left((\widetilde\beta_k(z_2)-b_k(z_2))\varepsilon_k(z_2)\right)|\bigg]\\
\le&CN^{-2}\sum_{k=1}^n\bigg[\re^{1/2}|(\widetilde\beta_k(z_1)-b_k(z_1))\varepsilon_k(z_1)|^2\re^{1/2}|\widetilde\beta_k(z_2)\varepsilon_k(z_2)|^2\\
&+\re^{1/2}|b_k(z_1)\varepsilon_k(z_1)|^2\re^{1/2}|\left((\widetilde\beta_k(z_2)-b_k(z_2))\varepsilon_k(z_2)\right)|^2\bigg]\\
\le&CN^{-1}\sum_{k=1}^n\bigg[\re^{1/2}|\widetilde\beta_k(z_1)-b_k(z_1)|^2+\re^{1/2}|(\widetilde\beta_k(z_2)-b_k(z_2)|^2\bigg]\to0,
\end{align*}
which yields
\begin{align*}
&N^{-2}\sum_{k=1}^n s_k^2\re_{k-1}\bigg[\re_k\left(\widetilde\beta_k(z_1)\varepsilon_k(z_1)\right)\re_k\left(\widetilde\beta_k(z_2)\varepsilon_k(z_2)\right)\\
&-\re_k\left(b_k(z_1)\varepsilon_k(z_1)\right)\re_k\left(b_k(z_2)\varepsilon_k(z_2)\right)\bigg]\xrightarrow{i.p.}0.
\end{align*}
Therefore, our goal is to find the limit in probability of
\begin{align*}
&N^{-2}\sum_{k=1}^n s_k^2 b_{k}(z_1)b_{k}(z_2)\re_{k-1}\left[\re_k\left(\varepsilon_k(z_1)\right)\re_k\left(\varepsilon_k(z_2)\right)\right].
\end{align*}
Using the moments of normal random variables, we have
\begin{align*}
\re_{k-1}\left[\re_k\left(\varepsilon_k(z_1)\right)\re_k\left(\varepsilon_k(z_2)\right)\right]
=2\rtr(\bt_{2n}\re_k\bd_k^{-1}(z_1)\bt_{2n}\re_k\bd_k^{-1}(z_2)).
\end{align*}
Consequently, it suffices to study
\begin{align}\label{goal}
N^{-2}\sum_{k=1}^n s_k^2 b_{k}(z_1)b_{k}(z_2)\rtr(\bt_{2n}\re_k\bd_k^{-1}(z_1)\bt_{2n}\re_k\bd_k^{-1}(z_2)).
\end{align}

Let ${\bf R}_{k}(z)=z\bi-\frac{1}N\sum_{j\neq k}s_j\psi_j(z)\bt_{2n}$,
$$\beta_{jk}(z)=\frac1{1+N^{-1}s_j\by_j'\bd_{jk}^{-1}(z)\by_j}\quad {\rm and} \quad b_{jk}(z)=\frac1{1+N^{-1}s_j\re\rtr(\bd_{jk}^{-1}(z)\bt_{2n})}.$$
Write
\begin{align*}
\bd_k(z_1)+{\bf R}_{k}(z_1)=\frac1N\sum_{j\neq k}s_j\by_j\by_j'-\frac{1}N\sum_{j\neq k}s_j\psi_j(z_1)\bt_{2n}
\end{align*}
which implies that
\begin{align*}
&{\bf R}_{k}^{-1}(z_1)+\bd_k^{-1}(z_1)=\frac1N\sum_{j\neq k}s_j{\bf R}_{k}^{-1}(z_1)\by_j\by_j'\bd_k^{-1}(z_1)-\frac{1}N\sum_{j\neq k}s_j\psi_j(z_1){\bf R}_{k}^{-1}(z_1)\bt_{2n}\bd_k^{-1}(z_1).
\end{align*}
Using the formula
\begin{align}\label{al:2}
\left(\boldsymbol\Sigma+q\boldsymbol{\alpha\beta}'\right)^{-1}\boldsymbol{\alpha}=\frac{\boldsymbol\Sigma^{-1}\boldsymbol{\alpha}}
{1+q\boldsymbol{\beta}'
\boldsymbol\Sigma^{-1}\boldsymbol\alpha},
\end{align}
we have
\begin{align}
{\bf R}_{k}^{-1}(z_1)+\bd_k^{-1}(z_1)
=&\frac1N\sum_{j\neq k}s_j\psi_{j}(z_1){\bf R}_{k}^{-1}(z_1)\left(\by_j\by_j'-\bt_{2n}\right)\bd_{jk}^{-1}(z_1)\label{eq14}\\
&+\frac1N\sum_{j\neq k}s_j\left(\beta_{jk}(z_1)-\psi_j(z_1)\right){\bf R}_{k}^{-1}(z_1)\by_j\by_j'\bd_{jk}^{-1}(z_1)\notag\\
&+\frac1N\sum_{j\neq k}s_j\psi_{j}(z_1){\bf R}_{k}^{-1}(z_1)\bt_{2n}\left(\bd_{jk}^{-1}(z_1)-\bd_k^{-1}(z_1)\right)\notag\\
\triangleq&{\bf A_1}(z_1)+{\bf A_2}(z_1)+{\bf A_3}(z_1)\notag.
\end{align}
By a direct calculation, we have for any positive number $t\ge0$
\begin{align*}
\Im\left(z-\frac{1}N\sum_{j\neq k}s_j\psi_j(z)t\right)=&v_0-\frac{1}N\sum_{j\neq k}\frac{s_j^2 t}{|1+s_j\re g_{2n}(z)|^2}\Im\re g_{2n}(\bar z)\\
=&v_0\left(1+\frac{1}{N^2}\sum_{j\neq k}\frac{s_j^2 t}{|1+s_j\re g_{2n}(z)|^2}\re \rtr\left(\bd^{-1}(z)\bd^{-1}(\bar z)\bt_{2n}\right)\right)\ge v_0
\end{align*}
which yields
\begin{align*}
\left\|{\bf R}_{k}^{-1}(z)\right\|\le\frac1{v_0}.
\end{align*}

Let ${\bf M}$ be a $N\times N$ matrix with a nonrandom bound on the spectral norm of ${\bf M}$ for all parameters governing ${\bf M}$ and under all realizations of ${\bf M}$. By the Cauchy-Schwarz inequality, one gets
\begin{align}\label{al4}
\re\bigg|\rtr\big({\bf A_1}(z_1)&{\bf M}\big)\bigg|
\le C\re^{1/2}\Bigg|\by_j'\bd_{jk}^{-1}(z_1){\bf R}_{k}^{-1}(z_1)\by_j\\
&-\rtr\left(
{\bf R}_{k}^{-1}(z_1)\bt_{2n}\bd_{jk}^{-1}(z_1)\right)\Bigg|^2
=O(N^{1/2})\notag.
\end{align}
Let $\widetilde\beta_{jk}(z)=\frac1{1+N^{-1}s_j\rtr(\bd_{jk}^{-1}(z)\bt_{2n})}$. From (\ref{eq13})
\begin{align*}
\re|\widetilde\beta_{jk}(z)-b_{jk}(z)|=O(N^{-1}).
\end{align*}
Applying the above inequality, Lemma \ref{lep1} and Lemma \ref{lep3}, we obtain
\begin{align}\label{al8}
\re|\beta_{jk}(z)-{\psi_j(z)}|^2
\le&C\left[\re|\beta_{jk}(z)-\widetilde\beta_{jk}(z)|^2+|b_{jk}(z)-\psi_j(z)|^2\right]+O(N^{-1})\\
\le&\frac C{N^2}\re^{1/2}|\by_j'\bd_{jk}^{-1}(z)\by_j-\rtr(\bd_{jk}^{-1}(z)\bt_{2n})|^4\notag\\
&+\frac C{N^2}\re|\rtr(\bd_{jk}^{-1}(z)\bt_{2n})-\rtr(\bd^{-1}(z)\bt_{2n})|^2+O(N^{-1})\notag\\
\le&\frac C{N}+\frac C{N^2}+O(N^{-1})=O(N^{-1})\notag
\end{align}
which implies that
\begin{align}\label{al5}
\re\bigg|\rtr\big({\bf A_2}(z_1)&{\bf M}\big)\bigg|
\le\frac CN\sum_{j\neq k}\re^{1/2}\left|\beta_{jk}(z_1)-{\psi_j(z_1)}\right|^2\\
&\times\re^{1/2}\left|\by_j'\bd_{jk}^{-1}(z_1){\bf M}{\bf R}_{k}^{-1}(z_1)\by_j\right|^2
=O(N^{1/2})\notag.
\end{align}
Lemma \ref{lep3} implies that
\begin{align}\label{al6}
\re\left|\rtr\left({\bf A_3}(z_1){\bf M}\right)\right|\le&\frac CN\sum_{j\neq k}\rtr\bigg[\left(\bd_{jk}^{-1}(z_1)-\bd_k^{-1}(z_1)\right)
\times{\bf R}_{k}^{-1}(z_1)\bt_{2n}\bigg]
\le C.
\end{align}
Using (\ref{eq14}), (\ref{al5}), and (\ref{al6}), one gets
\begin{align}\label{eq15}
\rtr(\re_k\left(\bd_k(z_1)\right)\bt_{2n}&\bd_k^{-1}(z_2)\bt_{2n})=-\rtr(\re_k\left({\bf R}_{k}^{-1}(z_1)\right)\bt_{2n}\bd_k^{-1}(z_2)\bt_{2n})\\
&+\rtr(\re_k\left(\ba_1(z_1)\right)\bt_{2n}\bd_k^{-1}(z_2)\bt_{2n})+a(z_1,z_2)\notag
\end{align}
where $\re|a(z_1,z_2)|\le O(N^{1/2})$. Furthermore, write
\begin{align*}
&\rtr(\re_k\left(\ba_1(z_1)\right)\bt_{2n}\bd_k^{-1}(z_2)\bt_{2n})\\
=&-\frac1{N^2}\sum_{j< k}s_j^2\psi_{j}(z_1)\beta_{jk}(z_2)\left[\by_j'\re_k\bd_{jk}^{-1}(z_1)\bt_{2n}\bd_{jk}^{-1}(z_2)\by_j
-\rtr\left(\re_k\bd_{jk}^{-1}(z_1)\bt_{2n}\bd_{jk}^{-1}(z_2)\bt_{2n}\right)\right]\\
&\times\left[\by_j'\bd_{jk}^{-1}(z_2)\bt_{2n}{\bf R}_{k}^{-1}(z_1)\by_j-\rtr\left(\bd_{jk}^{-1}(z_2)\bt_{2n}{\bf R}_{k}^{-1}(z_1)\bt_{2n}\right)\right]\\
&-\frac1{N^2}\sum_{j< k}s_j^2\psi_{j}(z_1)\beta_{jk}(z_2)\left(\by_j'\re_k\bd_{jk}^{-1}(z_1)\bt_{2n}\bd_{jk}^{-1}(z_2)\by_j
-\rtr\left(\re_k\bd_{jk}^{-1}(z_1)\bt_{2n}\bd_{jk}^{-1}(z_2)\bt_{2n}\right)\right)\\
&\times\rtr\left(\bd_{jk}^{-1}(z_2)\bt_{2n}{\bf R}_{k}^{-1}(z_1)\bt_{2n}\right)\\
&-\frac1{N^2}\sum_{j< k}s_j^2\psi_{j}(z_1)\beta_{jk}(z_2)\rtr\left(\re_k\bd_{jk}^{-1}(z_1)\bt_{2n}\bd_{jk}^{-1}(z_2)\bt_{2n}\right)\\
&\times\left[\by_j'\bd_{jk}^{-1}(z_2)\bt_{2n}{\bf R}_{k}^{-1}(z_1)\by_j-\rtr\left(\bd_{jk}^{-1}(z_2)\bt_{2n}{\bf R}_{k}^{-1}(z_1)\bt_{2n}\right)\right]\\
&-\frac1{N^2}\sum_{j< k}s_j^2\psi_{j}(z_1)\beta_{jk}(z_2)\rtr\left(\re_k\bd_{jk}^{-1}(z_1)\bt_{2n}\bd_{jk}^{-1}(z_2)\bt_{2n}\right)
\rtr\left(\bd_{jk}^{-1}(z_2)\bt_{2n}{\bf R}_{k}^{-1}(z_1)\bt_{2n}\right)\\
&+\frac1N\sum_{j< k}s_j\psi_{j}(z_1)\rtr\left[{\bf R}_{k}^{-1}(z_1)\left(\by_j\by_j'-\bt_{2n}\right)\re_k\bd_{jk}^{-1}(z_1)\bt_{2n}\bd_{jk}^{-1}(z_2)\bt_{2n}\right]\\
&-\frac1N\sum_{j< k}s_j\psi_{j}(z_1)\rtr\left[{\bf R}_{k}^{-1}(z_1)\bt_{2n}\re_k\bd_{jk}^{-1}(z_1)\bt_{2n}\left(\bd_k^{-1}(z_2)-\bd_{jk}^{-1}(z_2)\right)\bt_{2n}\right]\\
\triangleq&a_1(z_1,z_2)+a_2(z_1,z_2)+a_3(z_1,z_2)+a_4(z_1,z_2)+a_5(z_1,z_2)+a_6(z_1,z_2).
\end{align*}
It follows from Lemma \ref{add} and Lemma \ref{lep1} that
\begin{align*}
\re|a_1(z_1,z_2)+a_2(z_1,z_2)+a_3(z_1,z_2)+a_5(z_1,z_2)|\le CN^{1/2}.
\end{align*}
In addition, Lemma \ref{lep3} yields that
\begin{align*}
\re|a_6(z_1,z_2)|\le C
\end{align*}
and that
\begin{align*}
\re|a_4(z_1,z_2)+&\frac1{N^2}\sum_{j< k}s_j^2\psi_{j}(z_1)\psi_{j}(z_2)\\
&\rtr\left(\re_k\bd_{k}^{-1}(z_1)\bt_{2n}\bd_{k}^{-1}(z_2)\bt_{2n}\right)
\rtr\left(\bd_{k}^{-1}(z_2)\bt_{2n}{\bf R}_{k}^{-1}(z_1)\bt_{2n}\right)|\le CN^{-1}
\end{align*}
where the last inequality uses (\ref{al8}). (\ref{eq15}), together with the above three inequalities, ensures that
\begin{align*}
&\rtr(\re_k\bd_k^{-1}(z_1)\bt_{2n}\bd_k^{-1}(z_2)\bt_{2n})\bigg[1+\frac1{N^2}\sum_{j< k}s_j^2\psi_{j}(z_1)\psi_{j}(z_2)
\rtr\left(\bd_{k}^{-1}(z_2)\bt_{2n}{\bf R}_{k}^{-1}(z_1)\bt_{2n}\right)\bigg]\\
=&-\rtr\left({\bf R}_{k}^{-1}(z_1)\bt_{2n}\bd_k^{-1}(z_2)\bt_{2n}\right)+a_7(z_1,z_2)
\end{align*}
where $\re|a_7(z_1,z_2)|\le CN^{1/2}$. Combining (\ref{al4}), (\ref{al5}) with (\ref{al6}), one has
\begin{align}\label{eq16}
\rtr&(\re_k\bd_k^{-1}(z_1)\bt_{2n}\bd_k^{-1}(z_2)\bt_{2n})\\
&\times\bigg[1-\frac1{N^2}\sum_{j< k}s_j^2\psi_{j}(z_1)\psi_{j}(z_2)
\rtr\left({\bf R}_{k}^{-1}(z_1)\bt_{2n}{\bf R}_{k}^{-1}(z_2)\bt_{2n}\right)\bigg]\notag\\
=&\rtr\left({\bf R}_{k}^{-1}(z_1)\bt_{2n}{\bf R}_{k}^{-1}(z_2)\bt_{2n}\right)+a_8(z_1,z_2)\notag
\end{align}
where $\re|a_8(z_1,z_2)|\le CN^{1/2}$. From \cite{zlx}
\begin{align*}
g_{2n}(z)\to g_{2}(z) \quad{\rm a.s.}\quad{\rm as} \ n\to\infty.
\end{align*}
It follows that
\begin{align}\label{al9}
|\psi_{j}(z)-\frac1{1+s_jg_{2n}^0(z)}|\le C\left(|\re g_{2n}(z)-g_{2}(z)|+|g_{2n}^0(z)-g_{2}(z)|\right)=o(1),
\end{align}
where $g_{2n}(z)$ is defined at (\ref{eq4}). Note that by (\ref{i4})
\begin{align}
&\frac1N\sum_{j=1}^ns_j^2\frac1{\left(1+s_jg_{2n}^0(z_1)\right)\left(1+s_jg_{2n}^0(z_2)\right)}\notag\\
=&\frac1{g_{2n}^0(z_1)-g_{2n}^0(z_2)}\left[\frac1{N}\sum_{j=1}^n\frac{s_j}{1+s_jg_{2n}^0(z_2)}-\frac1{N}\sum_{j=1}^n\frac{s_j}{1+s_jg_{2n}^0(z_1)}\right]\notag\\
=&\frac{z_1g_{1n}^0(z_1)-z_2g_{1n}^0(z_2)}{g_{2n}^0(z_1)-g_{2n}^0(z_2)}\label{i6}
\end{align}
and by (\ref{i5})
\begin{align}
&\int\frac{t^2}
{\left(1+g_{1n}^0(z_1)t\right)\left(1+g_{1n}^0(z_2)t\right)}dH_{2n}(t)=\frac{z_1g_{2n}^0(z_1)-z_2g_{2n}^0(z_2)}{g_{1n}^0(z_1)-g_{1n}^0(z_2)}.\label{i7}
\end{align}
Using the fact from (\ref{i4}) and (\ref{al9}) that
\begin{align*}
\frac1N\sum_{j\neq k}s_j\psi_j(z)+zg_{1n}^0(z)=o(1),
\end{align*}
we deduce that
\begin{align*}
\rtr\left({\bf R}_{k}^{-1}(z_1)\bt_{2n}{\bf R}_{k}^{-1}(z_2)\bt_{2n}\right)=&\frac N{z_1z_2}\int\frac{t^2}
{\left(1+g_{1n}^0(z_1)t\right)\left(1+g_{1n}^0(z_2)t\right)}dH_{2n}(t)\\
=&\frac N{z_1z_2}\frac{z_1g_{2n}^0(z_1)-z_2g_{2n}^0(z_2)}{g_{1n}^0(z_1)-g_{1n}^0(z_2)}.
\end{align*}

We now deal with $\frac1{N^2}\sum_{j< k}s_j^2\psi_{j}(z_1)\psi_{j}(z_2)$ in (\ref{eq16}). For any $\varepsilon\in(0,1/100)$, we now distinguish the following two cases.
\begin{itemize}
\item[Case 1]: When $k\le n^{1-\varepsilon}$, one gets
\begin{align*}
&\frac1{N^2}\sum_{j< k}\bigg|\bigg[\frac{s_j^2}{(1+s_jg_{2n}^0(z_1))(1+s_jg_{2n}^0(z_2))}-c_n^{-1}\frac{z_1g_{1n}^0(z_1)-z_2g_{1n}^0(z_2)}{g_{2n}^0(z_1)-g_{2n}^0(z_2)}\bigg]\\
&\times\rtr\left({\bf R}_{k}^{-1}(z_1)\bt_{2n}{\bf R}_{k}^{-1}(z_2)\bt_{2n}\right)\bigg|\le CN^{-\varepsilon}=o(1),
\end{align*}
\item[Case 2]: When $k> n^{1-\varepsilon}$, one gets by (\ref{i6})
\begin{align*}
&\frac1{N^2}\bigg|\sum_{j< k}\bigg[\frac{s_j^2}{(1+s_jg_{2n}^0(z_1))(1+s_jg_{2n}^0(z_2))}-c_n^{-1}\frac{z_1g_{1n}^0(z_1)-z_2g_{1n}^0(z_2)}{g_{2n}^0(z_1)-g_{2n}^0(z_2)}\bigg]\\
&\times\rtr\left({\bf R}_{k}^{-1}(z_1)\bt_{2n}{\bf R}_{k}^{-1}(z_2)\bt_{2n}\right)\bigg|\\
\le&\frac1{N}\bigg|\sum_{j< k}\bigg[\frac{s_j^2}{(1+s_jg_{2n}^0(z_1))(1+s_jg_{2n}^0(z_2))}-c_n^{-1}\frac{z_1g_{1n}^0(z_1)-z_2g_{1n}^0(z_2)}{g_{2n}^0(z_1)-g_{2n}^0(z_2)}\bigg]\bigg|\\
=&o(1).
\end{align*}
\end{itemize}
It follows that
\begin{align*}
&\rtr(\re_k\bd_k^{-1}(z_1)\bt_{2n}\bd_k^{-1}(z_2)\bt_{2n})\\
&\times\bigg[1-\frac{k-1}{nz_1z_2}\frac{z_1g_{1n}^0(z_1)-z_2g_{1n}^0(z_2)}{g_{2n}^0(z_1)-g_{2n}^0(z_2)}
\frac{z_1g_{2n}^0(z_1)-z_2g_{2n}^0(z_2)}{g_{1n}^0(z_1)-g_{1n}^0(z_2)}\bigg]\\
=&\frac{N}{z_1z_2}\frac{z_1g_{2n}^0(z_1)-z_2g_{2n}^0(z_2)}{g_{1n}^0(z_1)-g_{1n}^0(z_2)}+a_9(z_1,z_2)
\end{align*}
where $\re|a_9(z_1,z_2)|=o(N)$. Applying Lemma \ref{lep3} and (\ref{al9}), one gets
\begin{align*}
|b_{k}(z)-\frac1{1+s_kg_{2n}^0(z)}|\le \frac CN\re\left|\rtr\left(\bd_k^{-1}(z)-\bd^{-1}(z)\right)\bt_{2n}\right|+o(1)=o(1).
\end{align*}
Set
\begin{align*}
f_{n}(z_1,z_2)=\frac1{z_1z_2}\frac{z_1g_{1n}^0(z_1)-z_2g_{1n}^0(z_2)}
{g_{2n}^0(z_1)-g_{2n}^0(z_2)}\frac{z_1g_{2n}^0(z_1)-z_2g_{2n}^0(z_2)}{g_{1n}^0(z_1)-g_{1n}^0(z_2)}
\end{align*}
and
\begin{align*}
r_{nk}(z_1,z_2)=\frac{s_k^2}{\left(1+s_kg_{2n}^0(z_1)\right)\left(1+s_kg_{2n}^0(z_2)\right)}.
\end{align*}
By (\ref{i6}) and (\ref{i7}), we obtain
\begin{align}\label{eq17}
\left|f_n(z_1,z_2)\right|\le&\left[{c_n}\int\frac{x^2}{|1+g_{2n}^0(z_1)x|^2}dH_{1n}(x)
\int\frac{t^2}{|z_1\left(1+g_{1n}^0(z_1)t)\right)|^2}dH_{2n}(t)\right]^{1/2}\\
&\times\left[{c_n}\int\frac{x^2}{|1+g_{2n}^0(z_2)x|^2}dH_{1n}(x)
\int\frac{t^2}{| z_2 \left(1+g_{1n}^0(z_2)t\right)|^2}dH_{2n}(t)\right]^{1/2}\notag\\
=&\left[\frac{\Im\left(z_1g_{1n}^0(z_1)\right)}{\Im g_{2n}^0(\bar z_1)}
\frac{\Im g_{2n}^0(z_1)-v\int\frac{t}{|z_1\left(1+g_{1n}^0(z_1)t)\right)|^2}dH_{2n}(t)}{\Im\left(\bar z_1g_{1n}^0(\bar z_1)\right)}\right]^{1/2}\notag\\
&\times\left[\frac{\Im\left(z_2g_{1n}^0(z_2)\right)}{\Im g_{2n}^0(\bar z_2)}
\frac{\Im g_{2n}^0(z_2)-v\int\frac{t}{|z_2\left(1+g_{1n}^0(z_2)t)\right)|^2}dH_{2n}(t)}{\Im\left(\bar z_2g_{1n}^0(\bar z_2)\right)}\right]^{1/2}<1.\notag
\end{align}
Using (\ref{eq17}), (\ref{goal}) can be rewritten as for large n
 \begin{align*}
\frac{1}{N z_1z_2}\frac{z_1g_{2n}^0(z_1)-z_2g_{2n}^0(z_2)}{g_{1n}^0(z_1)-g_{1n}^0(z_2)}\sum_{k=1}^n r_{nk}(z_1,z_2)
\left({1-\frac{k-1}{n}f_{n}(z_1,z_2)}\right)^{-1}+o_p(1).
\end{align*}
Applying Lemma \ref{lep4} and (\ref{i6}), we have
 \begin{align*}
&\frac{1}{N}\sum_{k=1}^n r_{nk}(z_1,z_2)\left({1-\frac{k-1}{n}f_{n}(z_1,z_2)}\right)^{-1}\\
=&\left({1-f_{n}(z_1,z_2)}\right)^{-1}\frac{1}{N}\sum_{k=1}^n r_{nk}(z_1,z_2)
-\frac{1}{N}\sum_{k=1}^n \sum_{j=1}^kr_{nj}(z_1,z_2)\\
&\times\left[\frac1{1-{n}^{-1}{k}f_{n}(z_1,z_2)}-\frac1{1-{n}^{-1}{(k-1)}f_{n}(z_1,z_2)}\right]\\
=&\left({1-f_{n}(z_1,z_2)}\right)^{-1}\frac{z_1g_{1n}^0(z_1)-z_2g_{1n}^0(z_2)}
{g_{2n}^0(z_1)-g_{2n}^0(z_2)}
-f_{n}(z_1,z_2)\frac{1}{N}\sum_{k=1}^n \frac{\sum_{j=1}^kr_{nj}(z_1,z_2)}
{k}\\
&\times\frac{{n}^{-1}k}{\left(1-{n}^{-1}{k}f_{n}(z_1,z_2)\right){\left(1-{n}^{-1}{(k-1)}f_{n}(z_1,z_2)\right)}}.
\end{align*}

We next develop the above limit by Abel's lemma. To this end, consider the following two cases, for any $\varepsilon\in(0,1/100)$ and large n.
\begin{itemize}
\item[Case 1]: When $k\le n^{1-\varepsilon}$, one gets
\begin{align*}
&\frac1{N}\sum_{k\le n^{1-\varepsilon}}\bigg|\bigg[\frac{\sum_{j=1}^kr_{nj}(z_1,z_2)}
{k}-c_n^{-1}\frac{z_1g_{1n}^0(z_1)-z_2g_{1n}^0(z_2)}
{g_{2n}^0(z_1)-g_{2n}^0(z_2)}\bigg]\\
&\times\frac{{n}^{-1}kf_{n}(z_1,z_2)}{\left(1-{n}^{-1}{k}f_{n}(z_1,z_2)\right){\left(1-{n}^{-1}{(k-1)}f_{n}(z_1,z_2)\right)}}\bigg|\le CN^{-\varepsilon}=o(1).
\end{align*}
\item[Case 2]: When $k> n^{1-\varepsilon}$, one gets by (\ref{i6})
\begin{align*}
&\frac1{N}\sum_{k\ge n^{1-\varepsilon}}\bigg|\bigg[\frac{\sum_{j=1}^kr_{nj}(z_1,z_2)}
{k}-c_n^{-1}\frac{z_1g_{1n}^0(z_1)-z_2g_{1n}^0(z_2)}
{g_{2n}^0(z_1)-g_{2n}^0(z_2)}\bigg]\\
&\times\frac{{n}^{-1}kf_{n}(z_1,z_2)}{\left(1-{n}^{-1}{k}f_{n}(z_1,z_2)\right){\left(1-{n}^{-1}{(k-1)}f_{n}(z_1,z_2)\right)}}\bigg|\\
\le& \frac C{N}\sum_{k\ge n^{1-\varepsilon}}\left|\frac{\sum_{j=1}^kr_{nj}(z_1,z_2)}
{k}-c_n^{-1}\frac{z_1g_{1n}^0(z_1)-z_2g_{1n}^0(z_2)}
{g_{2n}^0(z_1)-g_{2n}^0(z_2)}\right|=o(1).
\end{align*}
\end{itemize}
Hence, (\ref{goal}) can be transformed into
 \begin{align*}
&f_{n}(z_1,z_2)\left({1-f_{n}(z_1,z_2)}\right)^{-1}-
f_{n}^2(z_1,z_2)\\
&\times\frac{1}{n}\sum_{k=1}^n \frac{{n}^{-1}k}{\left(1-{n}^{-1}{k}f_{n}(z_1,z_2)\right){\left(1-{n}^{-1}{(k-1)}f_{n}(z_1,z_2)\right)}}+o_p(1).
\end{align*}
Thus,
\begin{align*}
(\ref{goal})\xrightarrow{i.p.}&\frac{f(z_1,z_2)}{1-f(z_1,z_2)}-f^2(z_1,z_2)\int_0^1\frac{t}{\left(1-tf(z_1,z_2)\right)^2}dt
=\int_0^{f(z_1,z_2)}\frac{1}{1-z}dz.
\end{align*}
We conclude that
\begin{align*}
\Phi(z_1,z_2)\xrightarrow{i.p.}\frac{\partial^2}{\partial z_2\partial z_1}\int_0^{f(z_1,z_2)}\frac{1}{1-z}dz.
\end{align*}

\subsection{Tightness of $M_{n1}(z)$}

This section is to prove tightness of the sequence of random functions $\widehat M_{n1}(z)$ for $z\in\mathcal{C}$ defined in (\ref{aa}). Similar to Section 3 of Bai and Silverstein (2004) (see \cite{bai2004clt}), it suffices to show  that
\begin{align*}
\sup_{n;z_1,z_2\in\mathcal{C}_n}\frac{\re\left|M_{n1}(z_1)-M_{n1}(z_2)\right|^2}{|z_1-z_2|^2}
\end{align*}
is finite.

We claim that the moments of $\|\bd^{-1}(z)\|$, $\|\bd^{-1}_j(z)\|$, and $\|\bd^{-1}_{jk}(z)\|$ are bounded in n and $z\in\mathcal{C}_n$. Without loss of generality, we only give the proof for $\re\|\bd^{-1}_1(z)\|^p$ and the others are similar.  In fact, it is obvious for $z=u+iv\in\mathcal{C}_u$. For $z\in\mathcal{C}_l$ or $z\in\mathcal{C}_r$, using (\ref{max}) and (\ref{min}), we have for any positive $p$ and suitably large $l$
\begin{align*}
\re\|\bd^{-1}_1(z)\|^p=&\re\|\bd^{-1}_1(z)\|^pI(\eta_l\le\lambda^{\bb_1(z)}\le \eta_r)\\
&+\re\|\bd^{-1}_1(z)\|^pI(\lambda_{\min}^{\bb_1(z)}< \eta_l \ {\rm or} \ \lambda_{\max}^{\bb_1(z)}> \eta_r)\\
\le&\max\{\frac1{|x_r-\eta_r|^p},\frac1{|\eta_l-x_l|^p}\}+v^{-p}{\rm P}(\lambda_{\min}^{\bb_1(z)}< \eta_l \ {\rm or} \ \lambda_{\max}^{\bb_1(z)}> \eta_r)\\
\le&C_1+C_2n^p\varepsilon_n^{-p}n^{-l}\le C_p.
\end{align*}

Write
\begin{align*}
m_n(z_1)-m_n(z_2)=\frac1N\rtr\left(\bd^{-1}(z_1)-\bd^{-1}(z_2)\right)=\frac1N\left(z_1-z_2\right)\rtr\bd^{-1}(z_1)\bd^{-1}(z_2).
\end{align*}
We then have
\begin{align*}
\frac{M_n(z_1)-M_n(z_2)}{z_1-z_2}=&\sum_{j=1}^N\left(\re_j-\re_{j-1}\right)\rtr\bd^{-1}(z_1)\bd^{-1}(z_2)\\
=&\sum_{j=1}^N\left(\re_j-\re_{j-1}\right)\rtr\left(\bd^{-1}(z_1)\bd^{-1}(z_2)-\bd_j^{-1}(z_1)\bd_j^{-1}(z_2)\right)\\
=&\sum_{j=1}^N\left(\re_j-\re_{j-1}\right)\rtr\left(\bd^{-1}(z_1)-\bd_j^{-1}(z_1)\right)\left(\bd^{-1}(z_2)-\bd_j^{-1}(z_2)\right)\\
&+\sum_{j=1}^N\left(\re_j-\re_{j-1}\right)\rtr\left(\bd^{-1}(z_1)-\bd_j^{-1}(z_1)\right)\bd_j^{-1}(z_2)\\
&+\sum_{j=1}^N\left(\re_j-\re_{j-1}\right)\rtr\bd_j^{-1}(z_1)\left(\bd^{-1}(z_2)-\bd_j^{-1}(z_2)\right)\\
=&\frac1{N^2}\sum_{j=1}^Ns_j^2\left(\re_j-\re_{j-1}\right)\beta_j(z_1)\beta_j(z_2)\left(\by_j'\bd_j^{-1}(z_1)\bd_j^{-1}(z_2)\by_j\right)^2\\
&-\frac1{N}\sum_{j=1}^Ns_j\left(\re_j-\re_{j-1}\right)\beta_j(z_1)\by_j'\bd_j^{-2}(z_1)\bd_j^{-1}(z_2)\by_j\\
&-\frac1{N}\sum_{j=1}^Ns_j\left(\re_j-\re_{j-1}\right)\beta_j(z_2)\by_j'\bd_j^{-1}(z_1)\bd_j^{-2}(z_2)\by_j\\
\triangleq&\mathcal{P}_1+\mathcal{P}_2+\mathcal{P}_3.
\end{align*}
Thus, it suffices to show that $\re\left|\mathcal{P}_1+\mathcal{P}_2+\mathcal{P}_3\right|^2$ is bounded. Denote $ \rho_j(z)=\by_j'\bd_j^{-1}(z)\by_j-\re\rtr\left(\bt_{2n}\bd_j^{-1}(z)\right)$. Note that
\begin{align}
\beta_j(z)=&b_j(z)-\frac1Ns_j \beta_j(z)b_j(z)\rho_j(z)\label{eq21}\\
=&b_j(z)-\frac1Ns_j b_j^2(z)\rho_j(z)+\frac1{N^2}s_j^2\beta_j(z)b_j^2(z)\rho_j^2(z)\label{al:1}.
\end{align}
Applying (\ref{eq21}), Lemma \ref{lep1}, and Lemma \ref{lep7}, we deduce for all large $n$
\begin{align*}
|b_j(z)|\le&|\re\beta_j(z)|+\frac1N|s_j b_j(z)\re( \beta_j(z)\rho_j(z))|\\
\le& C_1+C_2|b_j(z)|N^{-1/2}\le \frac{C_1}{1-C_2N^{-1/2}}.
\end{align*}
Hence $|b_j(z)|$ is bounded for all $n$. Using (\ref{eq21}), write
\begin{align*}
\mathcal{P}_1=&\frac1{N^2}\sum_{j=1}^Ns_j^2 b_j(z_1)b_j(z_2)\left(\re_j-\re_{j-1}\right)\left(\by_j'\bd_j^{-1}(z_1)\bd_j^{-1}(z_2)\by_j\right)^2\\
&-\frac1{N^3}\sum_{j=1}^Ns_j^3 b_j(z_1)b_j(z_2)\left(\re_j-\re_{j-1}\right) \beta_j(z_2)\rho_j(z_2)\left(\by_j'\bd_j^{-1}(z_1)\bd_j^{-1}(z_2)\by_j\right)^2\\
&-\frac1{N^3}\sum_{j=1}^Ns_j^3 b_j(z_1)\left(\re_j-\re_{j-1}\right) \beta_j(z_1)\beta_j(z_2)\rho_j(z_1)\left(\by_j'\bd_j^{-1}(z_1)\bd_j^{-1}(z_2)\by_j\right)^2\\
\triangleq&\mathcal{P}_{11}+\mathcal{P}_{12}+\mathcal{P}_{13}.
\end{align*}
By Lemma \ref{lep1}, we deduce that
\begin{align*}
\re|\mathcal{P}_{11}|^2=&\frac1{N^4}\re\Bigg|\sum_{j=1}^Ns_j^2 b_j(z_1)b_j(z_2)\left(\re_j-\re_{j-1}\right)\Big[\left(\by_j'\bd_j^{-1}(z_1)\bd_j^{-1}(z_2)\by_j\right)^2\\
&-\left(\rtr\bd_j^{-1}(z_1)\bd_j^{-1}(z_2)\bt_{2n}\right)^2\Big]\Bigg|^2\\
\le&\frac C{N^4}\sum_{j=1}^N\re\left|\by_j'\bd_j^{-1}(z_1)\bd_j^{-1}(z_2)\by_j-\rtr\bd_j^{-1}(z_1)\bd_j^{-1}(z_2)\bt_{2n}\right|^4\\
&+\frac C{N^2}\sum_{j=1}^N\re\left|\by_j'\bd_j^{-1}(z_1)\bd_j^{-1}(z_2)\by_j-\rtr\bd_j^{-1}(z_1)\bd_j^{-1}(z_2)\bt_{2n}
\right|^2\\
\le&\frac CN+C\le C.
\end{align*}
Using Lemma \ref{lep7} and Lemma \ref{lep1}, one finds
\begin{align*}
\re|\mathcal{P}_{12}|^2=
&\frac1{N^6}\re\left|\sum_{j=1}^Ns_j^3 b_j(z_1)b_j(z_2)\left(\re_j-\re_{j-1}\right) \beta_j(z_2)\rho_j(z_2)\left(\by_j'\bd_j^{-1}(z_1)\bd_j^{-1}(z_2)\by_j\right)^2\right|^2\\
\le&\frac C{N^6}\sum_{j=1}^N\re\left|\beta_j(z_2)\rho_j(z_2)\left(\by_j'\bd_j^{-1}(z_1)\bd_j^{-1}(z_2)\by_j
-\rtr\bd_j^{-1}(z_1)\bd_j^{-1}(z_2)\bt_{2n}\right)^2\right|^2\\
&+\frac C{N^2}\sum_{j=1}^N\re\left|\beta_j(z_2)\rho_j(z_2)\right|^2\\
\le&\frac C{N^6}\sum_{j=1}^N\re^{1/2}\left|\rho_j(z_2)\left(\by_j'\bd_j^{-1}(z_1)\bd_j^{-1}(z_2)\by_j
-\rtr\bd_j^{-1}(z_1)\bd_j^{-1}(z_2)\bt_{2n}\right)^2\right|^4\\
&\times\re^{1/2}\left|\beta_j(z_2)\right|^4+\frac C{N^2}\sum_{j=1}^N\re^{1/2}\left|\beta_j(z_2)\right|^4\re^{1/2}\left|\rho_j(z_2)\right|^4\\
\le&\frac C{N^2}+C\le C.
\end{align*}
By the same argument, we get $\re|\mathcal{P}_{13}|^2\le C$. Hence, we obtain
\begin{align*}
\re|\mathcal{P}_{1}|^2\le C.
\end{align*}
For $\mathcal{P}_{2}$ and $\mathcal{P}_{3}$, we only need to analyze one of them due to their similarity. From (\ref{eq21}), it is obvious that
\begin{align*}
\mathcal{P}_2=&-\frac1{N}\sum_{j=1}^Ns_j b_j(z_1)\left(\re_j-\re_{j-1}\right)\by_j'\bd_j^{-2}(z_1)\bd_j^{-1}(z_2)\by_j\\
&+\frac1{N^2}\sum_{j=1}^Ns_j^2 b_j(z_1)\left(\re_j-\re_{j-1}\right)\beta_j(z_1)\rho_j(z_1)\by_j'\bd_j^{-2}(z_1)\bd_j^{-1}(z_2)\by_j.
\end{align*}
This yields that
\begin{align*}
\re|\mathcal{P}_2|^2=&\frac1{N^2}\re\left|\sum_{j=1}^Ns_j b_j(z_1)\left(\re_j-\re_{j-1}\right)\by_j'\bd_j^{-2}(z_1)\bd_j^{-1}(z_2)\by_j\right|^2\\
&+\frac1{N^4}\re\left|\sum_{j=1}^Ns_j^2 b_j(z_1)\left(\re_j-\re_{j-1}\right)\beta_j(z_1)\rho_j(z_1)\by_j'\bd_j^{-2}(z_1)\bd_j^{-1}(z_2)\by_j\right|^2\\
\le&\frac C{N^2}\sum_{j=1}^N\re\left|\by_j'\bd_j^{-2}(z_1)\bd_j^{-1}(z_2)\by_j-\rtr\bd_j^{-2}(z_1)\bd_j^{-1}(z_2)\bt_{2n}\right|^2\\
&+\frac C{N^4}\sum_{j=1}^N\re\left|\beta_j(z_1)\rho_j(z_1)\by_j'\bd_j^{-2}(z_1)\bd_j^{-1}(z_2)\by_j\right|^2\\
\le& C+\frac C{N^4}\sum_{j=1}^N\re\left|\beta_j(z_1)\rho_j(z_1)\left(\by_j'\bd_j^{-2}(z_1)\bd_j^{-1}(z_2)\by_j
-\rtr\bd_j^{-2}(z_1)\bd_j^{-1}(z_2)\bt_{2n}\right)\right|^2\\
&+\frac C{N^2}\sum_{j=1}^N\re\left|\beta_j(z_1)\rho_j(z_1)\right|^2\le C
\end{align*}
where the first inequality is from Lemma \ref{lep1} and the last inequality is from Lemma \ref{lep7}. Therefore, we conclude that
\begin{align*}
\sup_{n;z_1,z_2\in\mathcal{C}_n}\frac{\re\left|M_{n1}(z_1)-M_{n1}(z_2)\right|^2}{|z_1-z_2|^2}\le\sup_{n;z_1,z_2\in\mathcal{C}_n}
\re\left|\mathcal{P}_1+\mathcal{P}_2+\mathcal{P}_3\right|^2\le C.
\end{align*}
This implies that $\widehat M_{n1}(z)$ is tight.

\subsection{Convergence of $M_{n2}(z)$}

Let ${\bf W}(z)=\frac1N\sum_{j=1}^ns_j\psi_j(z)\bt_{2n}-z\bi$. Our fist aim is to prove that $\left\|{\bf W}^{-1}(z)\right\|$ is uniformly bounded on $\mathcal{C}_n$. Indeed we have for any positive number $t\ge0$
\begin{align*}
\Im\bigg(\frac{1}N\sum_{j=1}^n&s_j\psi_j(z)t -z\bigg)=\frac{1}N\sum_{j=1}^n\frac{s_j^2 t}{|1+s_j\re g_{2n}(z)|^2}\re\Im g_{2n}(\bar z)-v\\
=&-v\left(1+\frac{1}{N^2}\sum_{j\neq k}\re\frac{s_j^2 t}{|1+s_j\re g_{2n}(z)|^2}\re\rtr\left(\bd^{-1}(z)\bd^{-1}(\bar z)\bt_{2n}\right)\right)\le -v
\end{align*}
which yields $\left\|{\bf W}^{-1}(z)\right\|$ is bounded by $v_0^{-1}$ on $\mathcal{C}_u$. Since $\Im\left(zg_1(z)\right)>0$, there exists a positive constant $\delta_1$ such that for any $t$ in the support of $H_2$
$$\inf_{z\in\mathcal{C}_l\cup\mathcal{C}_r}|zg_1(z)t+z|\ge\delta_1.$$
Moreover, since $zg_1(z)$ is continuous on $\mathcal{C}_l\cup\mathcal{C}_r$, there exists $C_0>0$ such that
\begin{align*}
\sup_{z\in\mathcal{C}_l\cup\mathcal{C}_r}|zg_1(z)|<C_0.
\end{align*}
Additionally, using $H_{2n}\xrightarrow{d}H_2$, for all large $n$ and any $t$ in the support of $H_2$, there exists an eigenvalue $\lambda^{\bt_{2n}}$ of $\bt_{2n}$ such that
\begin{align*}
|\lambda^{\bt_{2n}}-t|\le\frac{\delta_1}{4C_0}.
\end{align*}
Assume for the moment that
\begin{align}\label{al:3}
\sup_{z\in\mathcal{C}_l\cup\mathcal{C}_r}|\frac{1}N\sum_{j=1}^n&s_j\psi_j(z)+zg_1(z)|<\frac{\delta_1}{4\tau}.
\end{align}
It follows that
\begin{align*}
&\inf_{z\in\mathcal{C}_l\cup\mathcal{C}_r}|\frac1N\sum_{j=1}^ns_j\psi_j(z)\lambda^{\bt_{2n}}-z|
\ge\inf_{z\in\mathcal{C}_l\cup\mathcal{C}_r}|zg_1(z)t+z|\\
&-
\sup_{z\in\mathcal{C}_l\cup\mathcal{C}_r}|zg_1(z)||\lambda^{\bt_{2n}}-t|
-\sup_{z\in\mathcal{C}_l\cup\mathcal{C}_r}|\lambda^{\bt_{2n}}||\frac{1}N\sum_{j=1}^ns_j\psi_j(z)+zg_1(z)|\ge\delta/2.
\end{align*}
We conclude that
\begin{align}\label{al12}
\sup_{n,\mathcal{C}_n}\left\|{\bf W}^{-1}(z)\right\|<\infty.
\end{align}

We are now in position to prove (\ref{al:3}), i.e.,
\begin{align*}
\sup_{z\in\mathcal{C}_l\cup\mathcal{C}_r}|\frac{1}N\sum_{j=1}^n&s_j\psi_j(z)+zg_1(z)|\to 0\quad {\rm as} \ n\to\infty.
\end{align*}
By (\ref{max}), (\ref{min}) and (\ref{eq18}), we find
\begin{align*}
&\sup_{z\in\mathcal{C}_l\cup\mathcal{C}_r}|\re g_{2n}(z)|=\sup_{z\in\mathcal{C}_l\cup\mathcal{C}_r}\left|\frac1N\re\left[\rtr(\bd^{-1}\bt_{2n})I(\eta_l\le \lambda^{\bb_n}\le \eta_r)\right]\right|\\
&+\sup_{z\in\mathcal{C}_l\cup\mathcal{C}_r}\left|\frac1N\re\left[\rtr(\bd^{-1}\bt_{2n})I(\lambda_{\min}^{\bb_n}<\eta_l \ {\rm or} \  \lambda_{\max}^{\bb_n}>\eta_r)\right]\right|\\
\le&\tau\max\left\{\frac1{x_l-\eta_l},\frac1{\eta_r-x_r}\right\}+\tau n^{1+\alpha}{\rm P}(\lambda_{\min}^{\bb_n}<\eta_l \ {\rm or} \  \lambda_{\max}^{\bb_n}>\eta_r)\\
<& 1/2\tau+1/4\tau=3/4\tau
\end{align*}
which implies that $\psi_j(z)$ is bounded on ${\mathcal{C}_l\cup\mathcal{C}_r}$. For ${z\in\mathcal{C}_l\cup\mathcal{C}_r}$, rewrite
\begin{align*}
\frac{1}N\sum_{j=1}^ns_j\psi_j(z)&+zg_1(z)=(c_n-c)\int\frac{x}{1+x\re g_{2n}(z)}dH_{1n}(x)\\
&-c(\re g_{2n}(z)-g_2(z))\int\frac{x^2}{(1+x\re g_{2n}(z))(1+xg_2(z))}dH_{1n}(x)\\
&+c\int\frac{x}{1+xg_2(z)}d(H_{1n}(x)-dH_{1}(x)).
\end{align*}
Using Lemma \ref{lep10}, one gets
\begin{align}\label{al:7}
\sup_{z\in\mathcal{C}_l\cup\mathcal{C}_r}|\re g_{2n}(z)-g_2(z)|\to0 \quad{\rm as} \ n\to\infty.
\end{align}
Since the functions $x/(1+xg_2(z))$ in $x\in\{s_1,\cdots,s_n\}$ form a bounded, equicontinuous family $z$ ranges in $\mathcal{C}_l\cup\mathcal{C}_r$, by Problem 8, page 17 in \cite{bill}] and the fact that $H_{1n}\to H_1$ we find that
\begin{align}\label{eq19}
\sup_{z\in\mathcal{C}_l\cup\mathcal{C}_r}\left|\int\frac{x}{1+xg_2(z)}d(H_{1n}(x)-dH_{1}(x))\right|\to0
\end{align}
Combining (\ref{al:7}) with (\ref{eq19}) we obtain (\ref{al:3}).

Write $\bd(z)-{\bf W}\left(z\right)=\frac1N\sum_{j=1}^ns_j\by_j\by_j'-\frac1N\sum_{j=1}^ns_j\psi_j(z)\bt_{2n}$. Taking inverses and then expected value, we have
\begin{align}\label{al:6}
&\bw^{-1}(z)-\re\bd^{-1}(z)\\
=&\bw^{-1}(z)\re\left[\frac1N\sum_{j=1}^ns_j\by_j\by_j'\bd^{-1}(z)-\frac1N\sum_{j=1}^ns_j\psi_j(z)\bt_{2n}\bd^{-1}(z)\right]\notag\\
=&\bw^{-1}(z)\re\left[\frac1N\sum_{j=1}^ns_j\beta_j(z)\by_j\by_j'\bd_j^{-1}(z)-\frac1N\sum_{j=1}^ns_j\psi_j(z)\bt_{2n}\bd^{-1}(z)\right]\notag.
\end{align}
Taking the trace on both sides and dividing by $-1$, one obtains
\begin{align}\label{eq20}
d_{n1}(z)
=&-\frac1{N}\sum_{j=1}^ns_j\re\beta_j(z)\left(\by_j'\bd_j^{-1}(z)\bw^{-1}(z)\by_j-\re\rtr\left(\bw^{-1}(z)\bt_{2n}\bd_j^{-1}(z)\right)\right)\\
&-\frac1{N}\sum_{j=1}^ns_j\re\beta_j(z)\left(\re\rtr\left(\bw^{-1}(z)\bt_{2n}\bd_j^{-1}(z)\right)
-\re\rtr\left(\bw^{-1}(z)\bt_{2n}\bd^{-1}(z)\right)\right)\notag\\
&-\frac1{N}\sum_{j=1}^ns_j\re\left(\beta_j(z)-\psi_j(z)\right)\re\left(\rtr\left(\bw^{-1}(z)\bt_{2n}\bd^{-1}(z)\right)\right)\notag\\
\triangleq&\mathcal{J}_1+\mathcal{J}_2+\mathcal{J}_3\notag,
\end{align}
where $d_{n1}(z)=N\left[\re m_n(z)-\int\frac1{\frac1N\sum_{j=1}^ns_j\psi_j(z)x -z}dH_{2n}(x)\right]$.
 From (\ref{al:1}),
 $\mathcal{J}_1$ can be represented as
\begin{align*}
\mathcal{J}_1=&\frac1{N^2}\sum_{j=1}^ns_j^2 b_j^2(z)\re\rho_j(z)\by_j'\bd_j^{-1}(z)\bw^{-1}(z)\by_j\\
&-\frac1{N^3}\sum_{j=1}^ns_j^3 b_j^2\re\beta_j (z)\rho_j^2(z) \left(\by_j'\bd_j^{-1}(z)\bw^{-1}(z)\by_j-\rtr\left(\bw^{-1}(z)\bt_{2n}\bd_j^{-1}(z)\right)\right)\\
&-\frac1{N^3}\sum_{j=1}^ns_j^3 b_j^2\re\beta_j (z)\rho_j^2(z) \left(\rtr\left(\bw^{-1}(z)\bt_{2n}\bd_j^{-1}(z)\right)-\re\rtr\left(\bw^{-1}(z)\bt_{2n}\bd_j^{-1}(z)\right)\right)\\
\triangleq&\mathcal{J}_{11}+\mathcal{J}_{12}+\mathcal{J}_{13}.
\end{align*}
Applying Lemma \ref{lep1}, Lemma \ref{lep7}, and (\ref{al12}), we get
\begin{align*}
\left|\mathcal{J}_{12}\right|\le&\frac C{N^3}\sum_{j=1}^n\left(\re\left|\beta_j (z)\right|^4\right)^{1/4}\left(\re\left|\rho_j(z) \right|^8\right)^{1/4}\\
&\times
\left(\re \left|\by_j'\bd_j^{-1}(z)\bw^{-1}(z)\by_j-\rtr\left(\bw^{-1}(z)\bt_{2n}\bd_j^{-1}(z)\right)\right|^2\right)^{1/2}\\
\le&\frac C{\sqrt N}\to0
\end{align*}
and
\begin{align*}
\left|\mathcal{J}_{13}\right|\le&\frac C{N^3}\sum_{j=1}^n\left(\re\left|\beta_j (z)\right|^4\right)^{1/4}\left(\re\left|\rho_j(z) \right|^8\right)^{1/4}\\
&\times
\left(\re \left|\rtr\left(\bw^{-1}(z)\bt_{2n}\bd_j^{-1}(z)\right)-\re\rtr\left(\bw^{-1}(z)\bt_{2n}\bd_j^{-1}(z)\right)\right|^2\right)^{1/2}\\
\le& \frac CN\to0
\end{align*}
where the last inequality is obtained from Lemma \ref{lep6}. Moreover, we have
\begin{align*}
\mathcal{J}_{11}
=&\frac1{N^2}\sum_{j=1}^ns_j^2 b_j^2(z)\re\varepsilon_j(z)\left(\by_j'\bd_j^{-1}(z)\bw^{-1}(z)\by_j-\rtr\left(\bw^{-1}(z)\bt_{2n}\bd_j^{-1}(z)\right)\right)\\
&+\frac1{N^2}\sum_{j=1}^ns_j^2 b_j^2(z){\rm Cov}\left(\rtr\left(\bt_{2n}\bd_j^{-1}(z)\right),\rtr\left(\bw^{-1}(z)\bt_{2n}\bd_j^{-1}(z)\right)\right)\\
\triangleq&\mathcal{J}_{111}+\mathcal{J}_{112}.
\end{align*}
Using Lemma \ref{lep6} and the Cauchy-Swcharz inequality, one finds $|\mathcal{J}_{112}|\le CN^{-1}$. This yields
\begin{align}\label{eq22}
\mathcal{J}_1=\frac1{N^2}\sum_{j=1}^ns_j^2 b_j^2(z)\re\varepsilon_j(z)\left(\by_j'\bd_j^{-1}(z)\bw^{-1}(z)\by_j-\rtr\left(\bw^{-1}(z)\bt_{2n}\bd_j^{-1}(z)\right)\right)+o(1).
\end{align}
Note that (\ref{eq21}) and
\begin{align*}
&\re\rtr\left(\bw^{-1}(z)\bt_{2n}\bd_j^{-1}(z)\right)
-\re\rtr\left(\bw^{-1}(z)\bt_{2n}\bd^{-1}(z)\right)\\
=&\frac1{N}\sum_{j=1}^ns_j\re\beta_j(z)\by_j'\bd_j^{-1}(z)\bw^{-1}(z)\bt_{2n}\bd_j^{-1}(z)\by_j.
\end{align*}
It follows that
\begin{align*}
\mathcal{J}_{2}=&-\frac1{N^2}\sum_{j=1}^ns_j^2\re\beta_j(z)\re\beta_j(z)\by_j'\bd_j^{-1}(z)\bw^{-1}(z)\bt_{2n}\bd_j^{-1}(z)\by_j\\
=&-\frac1{N^2}\sum_{j=1}^ns_j^2b_j^2(z)\re\by_j'\bd_j^{-1}(z)\bw^{-1}(z)\bt_{2n}\bd_j^{-1}(z)\by_j\\
&+\frac1{N^3}\sum_{j=1}^ns_j^3b_j^2(z)\re\beta_j(z)\rho_j(z)\by_j'\bd_j^{-1}(z)\bw^{-1}(z)\bt_{2n}\bd_j^{-1}(z)\by_j\\
&+\frac1{N^3}\sum_{j=1}^ns_j^3b_j(z)\re\beta_j(z)\rho_j(z)\re\beta_j(z)\by_j'\bd_j^{-1}(z)\bw^{-1}(z)\bt_{2n}\bd_j^{-1}(z)\by_j\\
\triangleq&\mathcal{J}_{21}+\mathcal{J}_{22}+\mathcal{J}_{23}.
\end{align*}
From Lemma \ref{lep1} and (\ref{al12}), we see $|\mathcal{J}_{22}+\mathcal{J}_{23}|\le \frac C{\sqrt N}$. Hence,
\begin{align}\label{eq23}
\mathcal{J}_{2}=&-\frac1{N^2}\sum_{j=1}^ns_j^2b_j^2(z)\re\rtr\left(\bw^{-1}(z)\bt_{2n}\bd_j^{-1}(z)\bt_{2n}\bd_j^{-1}(z)\right)+o(1).
\end{align}
Note that from (\ref{al:1})
\begin{align*}
\re\left(\beta_j(z)-b_j(z)\right)=&\frac1{N^2}s_j^2b_j^3(z)\re\rho_j^2(z)-\frac1{N^3}s_j^3b_j^3(z)\re \beta_j(z)\rho_j^3(z)\\
=&\frac1{N^2}s_j^2b_j^3(z)\re\varepsilon_j^2(z)-\frac1{N^3}s_j^3b_j^3(z)\re \beta_j(z)\rho_j^3(z)\\
&+\frac1{N^2}s_j^2b_j^3(z)
\re\left(\rtr\left(\bd_j^{-1}\bt_{2n}\right)-\re\rtr\left(\bd_j^{-1}\bt_{2n}\right)\right)^2\\
\triangleq&\mathcal{H}_1+\mathcal{H}_2+\mathcal{H}_3.
\end{align*}
By Lemma \ref{lep1}, we obtain
\begin{align*}
|\mathcal{H}_2|\le&\frac C{N^3}\re^{1/2}|\beta_j(z)|^2\re^{1/2}|\rho_j(z)|^6\le CN^{-3/2}=o(N^{-1}).
\end{align*}
Using Lemma \ref{lep6}, we have
\begin{align*}
|\mathcal{H}_3|\le CN^{-2}=o(N^{-1}).
\end{align*}
These imply that
\begin{align*}
\re\left(\beta_j(z)-b_j(z)\right)
=&\frac1{N^2}s_j^2b_j^3(z)\re\varepsilon_j^2(z)+o(N^{-1}).
\end{align*}
Moreover,
\begin{align*}
b_j(z)-\psi_j(z)=&-\frac1{N}s_jb_j(z)\psi_j(z)\re\left(\rtr\left(\bd_j^{-1}\bt_{2n}\right)
-\rtr\left(\bd^{-1}\bt_{2n}\right)\right)\\
=&-\frac1{N^2}s_j^2b_j(z)\psi_j(z)\re\beta_j(z)\by_j'\bd_j^{-1}(z)\bt_{2n}\bd_j^{-1}\by_j\\
=&-\frac1{N^2}s_j^2b_j^2(z)\psi_j(z)\re\by_j'\bd_j^{-1}(z)\bt_{2n}\bd_j^{-1}\by_j\\
&+\frac1{N^3}s_j^3b_j^2(z)\psi_j(z)\re\beta_j(z)\rho_j(z)\by_j'\bd_j^{-1}(z)\bt_{2n}\bd_j^{-1}\by_j.
\end{align*}
From Lemma \ref{lep7} and Lemma \ref{lep1}, we have
\begin{align*}
&\left|\frac1{N^3}s_j^3b_j^2(z)\psi_j(z)\re\beta_j(z)\rho_j(z)\by_j'\bd_j^{-1}(z)\bt_{2n}\bd_j^{-1}\by_j\right|\\
\le&\frac1{N^3}s_j^3\re^{1/2}|\beta_j(z)|^2\re^{1/2}|\rho_j(z)\by_j'\bd_j^{-1}(z)\bt_{2n}\bd_j^{-1}\by_j|^2\le CN^{-3/2}.
\end{align*}
This yields that
\begin{align}\label{al15}
b_j(z)-\psi_j(z)
=&-\frac1{N^2}s_j^2b_j^2(z)\psi_j(z)\re\rtr\bd_j^{-1}(z)\bt_{2n}\bd_j^{-1}\bt_{2n}+o(N^{-1}).
\end{align}
Hence,
\begin{align}\label{al:5}
\re\left(\beta_j(z)-\psi_j(z)\right)
=&-\frac1{N^2}s_j^2b_j^2(z)\psi_j(z)\re\rtr\bd_j^{-1}(z)\bt_{2n}\bd_j^{-1}\bt_{2n}\\
&+\frac1{N^2}s_j^2b_j^3(z)\re\varepsilon_j^2(z)+o(N^{-1})\notag.
\end{align}
Thus, we get
\begin{align}\label{eq24}
\mathcal{J}_3
=&-\frac1{N^3}\sum_{j=1}^ns_j^3b_j^3(z)\re\varepsilon_j^2(z)\re\left(\rtr\left(\bw^{-1}(z)\bt_{2n}\bd^{-1}(z)\right)\right)\\
&+\frac1{N^3}s_j^3b_j^2(z)\psi_j(z)\re\rtr\bd_j^{-1}(z)\bt_{2n}\bd_j^{-1}\bt_{2n}
\re\left(\rtr\left(\bw^{-1}(z)\bt_{2n}\bd^{-1}(z)\right)\right)+o(1)\notag.
\end{align}
From (\ref{eq20}), (\ref{eq22}), (\ref{eq23}) and (\ref{eq24}), we conclude that
\begin{align*}
d_{n1}(z)
=&\frac1{N^2}\sum_{j=1}^ns_j^2 b_j^2(z)\re\varepsilon_j(z)\left(\by_j'\bd_j^{-1}(z)\bw^{-1}(z)\by_j-\rtr\left(\bw^{-1}(z)\bt_{2n}\bd_j^{-1}(z)\right)\right)\\
&-\frac1{N^2}\sum_{j=1}^ns_j^2b_j^2(z)\re\rtr\left(\bw^{-1}(z)\bt_{2n}\bd_j^{-1}(z)\bt_{2n}\bd_j^{-1}(z)\right)\\
&-\frac1{N^3}\sum_{j=1}^ns_j^3\psi_jb_j^2(z)\re\varepsilon_j^2(z)\re\left(\rtr\left(\bw^{-1}(z)\bt_{2n}\bd^{-1}(z)\right)\right)\\
&+\frac1{N^3}s_j^3b_j^2(z)\psi_j(z)\re\rtr\bd_j^{-1}(z)\bt_{2n}\bd_j^{-1}\bt_{2n}\re\left(\rtr\left(\bw^{-1}(z)\bt_{2n}\bd^{-1}(z)\right)\right)+o(1).
\end{align*}
It is evident from (\ref{al15}) that
\begin{align}\label{al:9}
|b_j(z)-\psi_j(z)|\le &\frac C{N}\re\left\|\bd_j^{-1}(z)\bt_{2n}\bd_j^{-1}\bt_{2n}\right\|+o(N^{-1})
\le \frac C{N}.
\end{align}
Then,
\begin{align*}
d_{n1}(z)
=&\frac1{N^2}\sum_{j=1}^ns_j^2 \psi_j^2(z)\re\varepsilon_j(z)\left(\by_j'\bd_j^{-1}(z)\bw^{-1}(z)\by_j-\rtr\left(\bw^{-1}(z)\bt_{2n}\bd_j^{-1}(z)\right)\right)\\
&-\frac1{N^2}\sum_{j=1}^ns_j^2\psi_j^2(z)\re\rtr\left(\bw^{-1}(z)\bt_{2n}\bd_j^{-1}(z)\bt_{2n}\bd_j^{-1}(z)\right)\\
&-\frac1{N^3}\sum_{j=1}^ns_j^3\psi_j^3(z)\re\varepsilon_j^2(z)\re\left(\rtr\left(\bw^{-1}(z)\bt_{2n}\bd^{-1}(z)\right)\right)\\
&+\frac1{N^3}s_j^3\psi_j^3(z)\re\rtr\bd_j^{-1}(z)\bt_{2n}\bd_j^{-1}\bt_{2n}\re\left(\rtr\left(\bw^{-1}(z)\bt_{2n}\bd^{-1}(z)\right)\right)+o(1).
\end{align*}
Considering the moments of the Gaussian variables, we have
\begin{align*}
d_{n1}(z)
=&\frac1{N^2}\sum_{j=1}^ns_j^2\psi_{j}^2(z)\re\rtr\left(\bw^{-1}(z)\bt_{2n}\bd_j^{-1}(z)\bt_{2n}\bd_j^{-1}(z)\right)\\
&-\frac1{N^3}\sum_{j=1}^ns_j^3\psi_{j}^3(z)\re\rtr\left(\bt_{2n}\bd_j^{-1}(z)\bt_{2n}\bd_j^{-1}(z)\right)
\re\left(\rtr\left(\bw^{-1}(z)\bt_{2n}\bd^{-1}(z)\right)\right)+o(1).
\end{align*}

Write $M_{n2}(z)$ as
\begin{align*}
&N\left[\re m_n(z)-m_n^0(z)\right]\\
=&d_{n1}(z)+N\left[\int\frac1{\frac1N\sum_{j=1}^ns_j\psi_j(z)x- z}dH_{2n}(x)+z^{-1}\int\frac1{1+g_{1n}^0(z)x}dH_{2n}(x)\right]\\
=&d_{n1}(z)-N\left(\re g_{2n}(z)-g_{2n}^0(z)\right)\frac1N\sum_{j=1}^n\frac{s_j^2\psi_j(z)}{1+g_{2n}^0(z)s_j}\\
&\times\int\frac{x}
{\left(\frac1N\sum_{j=1}^ns_j\psi_j(z)x- z\right)\left({z+zg_{1n}^0(z)x}\right)}dH_{2n}(x).
\end{align*}
Below we first find the relation between $\left(\re m_n(z)-m_n^0(z)\right)$ and $\left(\re g_{2n}(z)-g_{2n}^0(z)\right)$. Write $\bd(z)+z\bi_N=\frac1N\sum_{k=1}^ns_k\by_k\by_k'$. Multiplying by $\bd^{-1}(z)$ on the right-hand side and using the formula (\ref{al2}), we obtain
\begin{align*}
\bi_N+z\bd^{-1}(z)=&\frac1N\sum_{k=1}^ns_k\by_k\by_k'\left(\bd_k^{-1}(z)-\frac1Ns_k\beta_k(z)\bd_k^{-1}(z)\by_k\by_k'\bd_k^{-1}(z)\right)\\
=&\frac1N\sum_{k=1}^n s_k\beta_k(z)\by_k\by_k'\bd_k^{-1}(z).
\end{align*}
Taking the trace on both side and dividing by $N$, one gets
\begin{align*}
1+zm_n(z)=c_n-c_nn^{-1}\sum_{k=1}^n\beta_k(z).
\end{align*}
Together with (\ref{al1}), we have
\begin{align}\label{al:8}
\underline m_n(z)=-\frac1{zn}\sum_{k=1}^n\beta_k(z).
\end{align}
It is obtained from (\ref{al:5}) and (\ref{al:9})
\begin{align*}
\re\underline m_n(z)+\frac1{zn N^2}\sum_{j=1}^ns_j^2\psi_j^3(z)\re\rtr\left(\bd_j^{-1}(z)\bt_{2n}\bd_j^{-1}(z)\bt_{2n}\right)=-\frac1{zn}\sum_{j=1}^n\psi_j(z)+o(N^{-1}).
\end{align*}
Thus
\begin{align*}
\re\underline m_n(z)-\underline m_n^0(z)=&-\frac1{zn N^2}\sum_{j=1}^ns_j^2\psi_j^3(z)\re\rtr\left(\bd_j^{-1}(z)\bt_{2n}\bd_j^{-1}(z)\bt_{2n}\right)\\
&+\left(\re g_{2n}(z)-g_{2n}^0(z)\right)\frac1{zn}\sum_{j=1}^n\frac{s_j\psi_j(z)}{1+g_{2n}^0(z)s_j}+o(N^{-1}).
\end{align*}
Consequently,
\begin{align*}
\re g_{2n}(z)-g_{2n}^0(z)=&\left[\frac1{zn}\sum_{j=1}^n\frac{s_j\psi_j(z)}{1+g_{2n}^0(z)s_j}\right]^{-1}\times\Bigg[\re\underline m_n(z)-\underline m_n^0(z)\\
&+\frac1{zn N^2}\sum_{j=1}^ns_j^2\psi_j^3(z)\re\rtr\left(\bd_j^{-1}(z)\bt_{2n}\bd_j^{-1}(z)\bt_{2n}\right)\Bigg]+o(N^{-1}).
\end{align*}
Combining the above equalities with $M_{n2}(z)=N\left[\re m_n(z)-m_n^0(z)\right]=n\left[\re\underline m_n(z)-\underline m_n^0(z)\right]$, we conclude that
\begin{align}
&N\left[\re m_n(z)-m_n^0(z)\right]\label{al:10}\\
=&d_{n1}(z)-n\left[\re\underline m_n(z)-\underline m_n^0(z)\right]\left[\frac1{zn}\sum_{j=1}^n\frac{s_j\psi_j(z)}{1+g_{2n}^0(z)s_j}\right]^{-1}\notag\\
&\times\frac1n\sum_{j=1}^n\frac{s_j^2\psi_j(z)}{1+g_{2n}^0(z)s_j}\int\frac{x}
{\left(\frac1N\sum_{j=1}^ns_j\psi_j(z)x- z\right)\left({z+zg_{1n}^0(z)x}\right)}dH_{2n}(x)\notag\\
&-\frac1{z N^2}\sum_{j=1}^ns_j^2\psi_j^3(z)\re\rtr\left(\bd_j^{-1}(z)\bt_{2n}\bd_j^{-1}(z)\bt_{2n}\right)
\left[\frac1{zn}\sum_{j=1}^n\frac{s_j\psi_j(z)}{1+g_{2n}^0(z)s_j}\right]^{-1}\notag\\
&\times\frac1n\sum_{j=1}^n\frac{s_j^2\psi_j(z)}{1+g_{2n}^0(z)s_j}\int\frac{x}
{\left(\frac1N\sum_{j=1}^ns_j\psi_j(z)x- z\right)\left({z+zg_{1n}^0(z)x}\right)}dH_{2n}(x)\notag\\
=&\left(d_{n1}(z)-d_{n2}(z)\right)\Bigg\{1+z\left[\frac1{n}\sum_{j=1}^n\frac{s_j\psi_j(z)}{1+g_{2n}^0(z)s_j}\right]^{-1}
\frac1n\sum_{j=1}^n\frac{s_j^2\psi_j(z)}{1+g_{2n}^0(z)s_j}\label{eq25}\\
&\times\int\frac{x}
{\left(\frac1N\sum_{j=1}^ns_j\psi_j(z)x- z\right)\left({z+zg_{1n}^0(z)x}\right)}dH_{2n}(x)\Bigg\}^{-1}\notag
\end{align}
where
\begin{align*}
d_{n2}(z)=&\frac1{ N^2}\sum_{j=1}^ns_j^2\psi_j^3(z)\re\rtr\left(\bd_j^{-1}(z)\bt_{2n}\bd_j^{-1}(z)\bt_{2n}\right)
\left[\frac1{n}\sum_{j=1}^n\frac{s_j\psi_j(z)}{1+g_{2n}^0(z)s_j}\right]^{-1}\\
&\times\frac1n\sum_{j=1}^n\frac{s_j^2\psi_j(z)}{1+g_{2n}^0s_j}\int\frac{x}
{\left(\frac1N\sum_{j=1}^ns_j\psi_j(z)x- z\right)\left({z+zg_{1n}^0(z)x}\right)}dH_{2n}(x).
\end{align*}
Write
\begin{align*}
&\frac1{n}\sum_{j=1}^n\frac{s_j}{\left(1+g_{2}(z)s_j\right)^2}-\frac zn\sum_{j=1}^n\frac{s_j^2}{\left(1+g_{2}(z)s_j\right)^2}\int\frac{x}
{\left({z+zg_{1}(z)x}\right)^2}dH_{2}(x)\\
=&-\frac{zg_1(z)}{c_n}-\frac 1n\sum_{j=1}^n\frac{s_j^2}{\left(1+g_{2}(z)s_j\right)^2}\left[g_2(z)+z\int\frac{x}
{\left({z+zg_{1}(z)x}\right)^2}dH_{2}(x)\right]\\
=&-\frac{zg_1(z)}{c_n}\left\{1-\frac 1{z^2N}\sum_{j=1}^n\frac{s_j^2}{\left(1+g_{2}(z)s_j\right)^2}\int\frac{x^2}
{\left({1+g_{1}(z)x}\right)^2}dH_{2}(x)\right\}.
\end{align*}
Note that for all $z=u+iv\in\mathcal{C}$
\begin{align*}
&\left|\frac 1{z^2N}\sum_{j=1}^n\frac{s_j^2}{\left(1+g_{2}(z)s_j\right)^2}\int\frac{x^2}
{\left({1+g_{1}(z)x}\right)^2}dH_{2}(x)\right|\\
\le&\frac 1N\sum_{j=1}^n\frac{s_j^2}{\left|1+g_{2}(z)s_j\right|^2}\int\frac{x^2}
{\left|{z+zg_{1}(z)x}\right|^2}dH_{2}(x)\\
=&\frac{\Im \left(zg_1(z)\right)}{\Im g_2(z)}\frac{\Im g_2(z)-v\int\frac{x}
{\left|{z+zg_{1}(z)x}\right|^2}dH_{2}(x)}{\Im \left(zg_1(z)\right)}<1.
\end{align*}
By continuity, we have the denominator of (\ref{eq25}) is bounded away from zero.

We are now in position to find the limits of $d_{n1}(z)$ and $d_{n2}(z)$. Due to (\ref{al2}) and (\ref{al12}), we see that
\begin{align*}
&\left|\re\rtr\left(\bw^{-1}(z)\bt_{2n}\bd_j^{-1}(z)\bt_{2n}\bd_j^{-1}(z)\right)-\re\rtr\left(\bw^{-1}(z)\bt_{2n}\bd^{-1}(z)\bt_{2n}\bd^{-1}(z)\right)\right|\\
\le&\left|\re\rtr\left(\bw^{-1}(z)\bt_{2n}\bd_j^{-1}(z)\bt_{2n}\bd_j^{-1}(z)\right)-\re\rtr\left(\bw^{-1}(z)\bt_{2n}\bd_j^{-1}(z)\bt_{2n}\bd^{-1}(z)\right)\right|\\
&+\left|\re\rtr\left(\bw^{-1}(z)\bt_{2n}\bd^{-1}(z)\bt_{2n}\bd_j^{-1}(z)\right)-\re\rtr\left(\bw^{-1}(z)\bt_{2n}\bd^{-1}(z)\bt_{2n}\bd^{-1}(z)\right)\right|\\
\le&\frac{\tau}N\re|\beta_j(z)|\left|{\bf y}_j'\bd_j^{-1}(z)\bw^{-1}(z)\bt_{2n}\bd_j^{-1}(z)\bt_{2n}\bd_j^{-1}(z){\bf y}_j\right|\\
&+\frac{\tau}N\re|\beta_j(z)|\left|{\bf y}_j'\bd_j^{-1}(z)\bw^{-1}(z)\bt_{2n}\bd^{-1}(z)\bt_{2n}\bd_j^{-1}(z){\bf y}_j\right|\\
\le &C
\end{align*}
where the last inequality is from Lemma \ref{lep1} and Lemma \ref{lep7}. By the same argument, it follows that
\begin{align*}
&\left|\re\rtr\left(\bt_{2n}\bd_j^{-1}(z)\bt_{2n}\bd_j^{-1}(z)\right)-\re\rtr\left(\bt_{2n}\bd^{-1}(z)\bt_{2n}\bd^{-1}(z)\right)\right|\le C.
\end{align*}
Hence,
\begin{align*}
d_{n1}(z)
=&\frac1{N^2}\sum_{j=1}^ns_j^2\psi_{j}^2(z)\re\rtr\left(\bw^{-1}(z)\bt_{2n}\bd^{-1}(z)\bt_{2n}\bd^{-1}(z)\right)\\
&-\frac1{N^3}\sum_{j=1}^ns_j^3\psi_{j}^3(z)\re\rtr\left(\bt_{2n}\bd^{-1}(z)\bt_{2n}\bd^{-1}(z)\right)
\re\left(\rtr\left(\bw^{-1}(z)\bt_{2n}\bd^{-1}(z)\right)\right)+o(1).
\end{align*}

Our next goal is to find the limit of $\re\rtr\left(\bt_{2n}\bd^{-1}(z)\bt_{2n}\bd^{-1}(z)\right)$, $\re\rtr\left(\bw^{-1}(z)\bt_{2n}\bd^{-1}(z)\right)$ and $\re\rtr\left(\bw^{-1}(z)\bt_{2n}\bd^{-1}(z)\bt_{2n}\bd^{-1}(z)\right)$. From (\ref{al:6}), we have
\begin{align}\label{al18}
{\bf W}^{-1}(z)-\bd^{-1}(z)
=&\frac1N\sum_{j=1}^ns_j\psi_j(z){\bf W}^{-1}(z)\left(\by_j\by_j'-\bt_{2n}\right)\bd_{j}^{-1}(z)\\
&+\frac1N\sum_{j=1}^ns_j\left(\beta_{j}(z)-\psi_j(z)\right){\bf W}^{-1}(z)\by_j\by_j'\bd_{j}^{-1}(z)\notag\\
&+\frac1N\sum_{j=1}^ns_j\psi_j(z){\bf W}^{-1}(z)\bt_{2n}\left(\bd_{j}^{-1}(z)-\bd^{-1}(z)\right)\notag\\
\triangleq&{\bf G_1}(z)+{\bf G_2}(z)+{\bf G_3}(z)\notag.
\end{align}

Let ${\bf M}$ be $N\times N$ matrix with a nonrandom bound on the spectral norm of ${\bf M}$ for all parameters governing ${\bf M}$ and under all realizations of ${\bf M}$. Applying Lemma \ref{lep1}, Lemma \ref{lep6}, and Lemma \ref{lep7}, we obtain
\begin{align}\label{al16}
&\re|\beta_{j}(z)-\psi_j(z)|^2
\le\frac C{N^2}\re|\beta_j(z)\left(\by_j'\bd_j^{-1}(z)\by_j-\re\bd^{-1}(z)\bt_{2n}\right)|^2=O(N^{-1})
\end{align}
which implies that
\begin{align}\label{al19}
\re\bigg|\rtr\big({\bf G_2}(z)&{\bf M}\big)\bigg|
\le\frac CN\sum_{j=1}^n\re^{1/2}\left|\beta_{j}(z)-\psi_j(z)\right|^2\\
&\times\re^{1/2}\left|\by_j'\bd_{j}^{-1}(z){\bf M}{\bf W}^{-1}(z)\by_j\right|^2
=O(N^{1/2})\notag.
\end{align}
Form Lemma \ref{lep1} and (\ref{al2}), we have
\begin{align}\label{al20}
\re\left|\rtr\left({\bf G_3}(z){\bf M}\right)\right|\le&\frac C{N^2}\sum_{j=1}^n\left|\re\by_j'\bd_j^{-1}(z){\bf W}^{-1}(z)\bt_{2n}\bd_{j}^{-1}(z)\by_j\right|
\le C.
\end{align}
Furthermore, write
\begin{align*}
&\rtr(\left({\bf G}_1(z)\right)\bt_{2n}\bd^{-1}(z){\bf M})\\
=&\frac1N\sum_{j=1}^ns_j\psi_j(z)\rtr{\bf W}^{-1}(z)\by_j\by_j'\bd_{j}^{-1}(z)\bt_{2n}\left(\bd^{-1}(z)-\bd_{j}^{-1}(z)\right){\bf M}\\
&+\frac1N\sum_{j=1}^ns_j\psi_j(z)\rtr{\bf W}^{-1}(z)\left(\by_j\by_j'\bd_{j}^{-1}(z)\bt_{2n}\bd_j^{-1}(z){\bf M}-\bt_{2n}\bd_{j}^{-1}(z)\bt_{2n}\bd_j^{-1}(z){\bf M}\right)\\
&+\frac1N\sum_{j=1}^ns_j\psi_j(z)\rtr{\bf W}^{-1}(z)\bt_{2n}\bd_{j}^{-1}(z)\bt_{2n}\left(\bd_j^{-1}(z)-\bd^{-1}(z)\right){\bf M}\\
=&-\frac1{N^2}\sum_{j=1}^ns_j^2\psi_j(z)\beta_j(z)\by_j'\bd_{j}^{-1}(z)\bt_{2n}\bd_j^{-1}(z)\by_j\by_j'\bd_{j}^{-1}(z){\bf M}{\bf W}^{-1}(z)\by_j\\
&+\frac1N\sum_{j=1}^ns_j\psi_j(z)\rtr{\bf W}^{-1}(z)\left(\by_j\by_j'\bd_{j}^{-1}(z)\bt_{2n}\bd_j^{-1}(z){\bf M}-\bt_{2n}\bd_{j}^{-1}(z)\bt_{2n}\bd_j^{-1}(z){\bf M}\right)\\
&+\frac1{N^2}\sum_{j=1}^ns_j^2\psi_j(z)\beta_j(z)\by_j'\bd_{j}^{-1}(z){\bf M}{\bf W}^{-1}(z)\bt_{2n}\bd_{j}^{-1}(z)\bt_{2n}\bd_j^{-1}(z)\by_j\\
\triangleq&p_1(z)+p_2(z)+p_3(z).
\end{align*}
It is obvious that $\re p_2(z)=0$. Using Lemma \ref{lep1} and Lemma \ref{lep7}, we have
\begin{align*}
\re|p_3(z)|\le C.
\end{align*}
Together with (\ref{al16}) and Lemma \ref{lep1}, one gets
\begin{align*}
\re p_1(z)=-\frac1{N^2}\sum_{j=1}^ns_j^2\psi_j^2(z)\re\by_j'\bd_{j}^{-1}(z)\bt_{2n}\bd_j^{-1}(z)\by_j\by_j'\bd_{j}^{-1}(z){\bf M}{\bf W}^{-1}(z)\by_j+O(1).
\end{align*}
By the proof of Lemma \ref{lep10},  we obtain
\begin{align*}
|\psi_{j}(z)-\frac1{1+s_jg_{2n}^0(z)}|=o(1).
\end{align*}
Let $q_j=\frac{s_j^2}{\left({1+s_jg_{2n}^0(z)}\right)^2}$. Then, combining Lemma \ref{lep1}, we find
\begin{align*}
\re p_1(z)
=&-\frac1{N^2}\sum_{j=1}^nq_j\re\by_j'\bd_{j}^{-1}(z)\bt_{2n}\bd_j^{-1}(z)\by_j\by_j'\bd_{j}^{-1}(z){\bf M}{\bf W}^{-1}(z)\by_j+o(N)\notag\\
=&-\frac1{N^2}\sum_{j=1}^nq_j\re\left(\rtr\bd_{j}^{-1}(z)\bt_{2n}\bd_{j}^{-1}(z)\bt_{2n}\right)
\left(\rtr\bd_{j}^{-1}(z){\bf M}{\bf W}^{-1}(z)\bt_{2n}\right)+o(N)\notag\\
=&-\frac1{N^2}\sum_{j=1}^nq_j\re\left(\rtr\bd^{-1}(z)\bt_{2n}\bd^{-1}(z)\bt_{2n}\right)
\left(\rtr\bd^{-1}(z){\bf M}{\bf W}^{-1}(z)\bt_{2n}\right)+o(N)\notag\\
=&-\frac1{N^2}\sum_{j=1}^nq_j\left(\re\rtr\bd^{-1}(z)\bt_{2n}\bd^{-1}(z)\bt_{2n}\right)
\left(\re\rtr\bd^{-1}(z){\bf M}{\bf W}^{-1}(z)\bt_{2n}\right)+o(N).
\end{align*}
It follows that
\begin{align}\label{al21}
&\re\rtr(\left({\bf G}_1(z)\right)\bt_{2n}\bd^{-1}(z){\bf M})\\
=&-\frac1{N^2}\sum_{j=1}^nq_j\left(\re\rtr\bd^{-1}(z)\bt_{2n}\bd^{-1}(z)\bt_{2n}\right)
\left(\re\rtr\bd^{-1}(z){\bf M}{\bf W}^{-1}(z)\bt_{2n}\right)+o(N)\notag.
\end{align}
From (\ref{al18}), (\ref{al19})-(\ref{al21}), and
\begin{align*}
&\re\rtr(\left({\bf G}_1(z)\right)\bt_{2n}{\bf W}^{-1}(z)\bt_{2n}=0,
\end{align*}
one has
\begin{align}\label{al22}
\frac1N\re\rtr\bd^{-1}(z)\bt_{2n}{\bf W}^{-1}(z)\bt_{2n}=&\frac1N\rtr{\bf W}^{-1}(z)\bt_{2n}{\bf W}^{-1}(z)\bt_{2n}+o(1)\notag\\
=&\int\frac{t^2}{\left(\frac1N\sum_{j=1}^ns_j\psi_j(z)t-z\right)^2}dH_{2n}(t)+o(1).
\end{align}
By the same argument, we get
\begin{align*}
\frac1N\re\rtr{\bf W}^{-1}(z)\bt_{2n}{\bf W}^{-1}(z)\bt_{2n}\bd^{-1}(z)=&\frac1N\rtr{\bf W}^{-1}(z)\bt_{2n}{\bf W}^{-1}(z)\bt_{2n}{\bf W}^{-1}(z)+o(1)\\
=&\int\frac{t^2}{\left(\frac1N\sum_{j=1}^ns_j\psi_j(z)t-z\right)^3}dH_{2n}(t)+o(1).
\end{align*}
and
\begin{align}\label{eq26}
\frac1N\re\rtr{\bf W}^{-1}(z)\bt_{2n}\bd^{-1}(z)=&\frac1N\rtr{\bf W}^{-1}(z)\bt_{2n}{\bf W}^{-1}(z)+o(1)\\
=&\int\frac{t}{\left(\frac1N\sum_{j=1}^ns_j\psi_j(z)t-z\right)^2}dH_{2n}(t)+o(1)\notag.
\end{align}
Together with (\ref{al18}), (\ref{al19})-(\ref{al22}), we have
\begin{align*}
\frac1N\re\rtr\bd^{-1}(z)&\bt_{2n}\bd^{-1}(z)\bt_{2n}=\frac1N\re\rtr{\bd}^{-1}(z)\bt_{2n}{\bf W}^{-1}(z)\bt_{2n}\notag\\
&\times\left[1+\frac1{N^2}\sum_{j=1}^nq_j\left(\re\rtr\bd^{-1}(z)\bt_{2n}\bd^{-1}(z)\bt_{2n}\right)\right]+
o(1)\notag\\
=&\int\frac{t^2}{\left(\frac1N\sum_{j=1}^ns_j\psi_j(z)t-z\right)^2}dH_{2n}(t)\notag\\
&\times\left[1+\frac1{N^2}\sum_{j=1}^nq_j\left(\re\rtr\bd^{-1}(z)\bt_{2n}\bd^{-1}(z)\bt_{2n}\right)\right]
+o(1)
\end{align*}
which yields
\begin{align}\label{eq27}
\frac1N\re\rtr\bd^{-1}(z)&\bt_{2n}\bd^{-1}(z)\bt_{2n}
=\int\frac{t^2}{\left(\frac1N\sum_{j=1}^ns_j\psi_j(z)t-z\right)^2}dH_{2n}(t)\\
&\times\left[1-\frac1{N}\sum_{j=1}^nq_j\int\frac{t^2}
{\left(\frac1N\sum_{j=1}^ns_j\psi_j(z)t-z\right)^2}dH_{2n}(t)\right]^{-1}
+o(1)\notag.
\end{align}
Similarly, we get
\begin{align}\label{eq28}
&\frac1{N}\re\rtr\left(\bw^{-1}(z)\bt_{2n}\bd^{-1}(z)\bt_{2n}\bd^{-1}(z)\right)\\
=&\frac1N\re\rtr{\bf W}^{-1}(z)\bt_{2n}{\bf W}^{-1}(z)\bt_{2n}\bd^{-1}(z)\left[1+\frac1{N^2}\sum_{j=1}^nq_j\left(\re\rtr\bd^{-1}(z)\bt_{2n}\bd^{-1}(z)\bt_{2n}\right)\right]\notag\\
=&\int\frac{t^2}{\left(\frac1N\sum_{j=1}^ns_j\psi_j(z)t-z\right)^3}dH_{2n}(t)
\left[1+\frac1{N^2}\sum_{j=1}^nq_j\left(\re\rtr\bd^{-1}(z)\bt_{2n}\bd^{-1}(z)\bt_{2n}\right)\right]\notag\\
=&\int\frac{t^2}{\left(\frac1N\sum_{j=1}^ns_j\psi_j(z)t-z\right)^3}dH_{2n}(t)\left[1-\frac1{N}\sum_{j=1}^nq_j\int\frac{t^2}
{\left(\frac1N\sum_{j=1}^ns_j\psi_j(z)t-z\right)^2}dH_{2n}(t)\right]^{-1}\notag.
\end{align}
Finally, (\ref{al:3}), (\ref{eq26})-(\ref{eq28}), and
\begin{align*}
\frac1{N}\sum_{j=1}^nq_j=c_n\int\frac{x^2}{\left({1+xg_{2n}^0(z)}\right)^2}dH_{1n}(x)
\end{align*}
one has
\begin{align*}
&d_{n1}(z)=-c_n\int\frac{x^2}{\left({1+xg_{2n}^0(z)}\right)^2}dH_{1n}(x)\int\frac{t^2}{\left(zg_{1n}^0(z)t+z\right)^3}dH_{2n}(t)\\
&\times\left[1-c_n\int\frac{x^2}{\left({1+xg_{2n}^0(z)}\right)^2}dH_{1n}(x)\int\frac{t^2}
{\left(zg_{1n}^0(z)t+z\right)^2}dH_{2n}(t)\right]^{-1}\\
&-c_n\int\frac{x^3}{\left({1+xg_{2n}^0(z)}\right)^3}dH_{1n}(x)\int\frac{t}{\left(zg_{1n}^0(z)t+z\right)^2}dH_{2n}(t)
\int\frac{t^2}{\left(zg_{1n}^0(z)t+z\right)^2}dH_{2n}(t)\\
&\times\left[1-c_n\int\frac{x^2}{\left({1+xg_{2n}^0(z)}\right)^2}dH_{1n}(x)\int\frac{t^2}
{\left(zg_{1n}^0(z)t+z\right)^2}dH_{2n}(t)\right]^{-1}+o(1).
\end{align*}
and
\begin{align*}
&d_{n2}(z)
={- c_n}\int\frac{x^2}{\left(1+xg_{2n}^0(z)\right)^3}dH_{1n}(x)\int\frac{t^2}{\left(zg_{1n}^0(z)t+z\right)^2}dH_{2n}(t)\\
&\times\left[\int\frac{x}{\left({1+xg_{2n}^0(z)}\right)^2}dH_{1n}(x)\right]^{-1}
\int\frac{x^2}{\left({1+xg_{2n}^0(z)}\right)^2}dH_{1n}(x)\int\frac{t}
{\left({z+zg_{1n}^0(z)t}\right)^2}dH_{2n}(t)\\
&\times\left[1-c_n\int\frac{x^2}{\left({1+xg_{2n}^0(z)}\right)^2}dH_{1n}(x)\int\frac{t^2}
{\left(zg_{1n}^0(z)t+z\right)^2}dH_{2n}(t)\right]^{-1}+o(1).
\end{align*}
Consequently, from (\ref{al:10}) and the above two equalities, we conclude that
\begin{align*}
M_{n2}(z)\to&\left(d_1(z)-d_2(z)\right)\Bigg\{1-z^{-1}\left[\int\frac{x}{\left({1+xg_{2}(z)}\right)^2}dH_{1}(x)\right]^{-1}\\
&\times\int\frac{x^2}{\left({1+xg_{2}(z)}\right)^2}dH_{1}(x)\int\frac{t}
{\left({1+g_{1}(z)t}\right)^2}dH_{2}(t)\Bigg\}^{-1}.
\end{align*}

\section{Non-Gaussian case}

It has been verified that Lemma \ref{th2} is true when the entries of the matrix are independent Gaussian variables. This section is to show this conclusion still holds in the general case. The strategy is to compare the characteristic functions of the linear spectral statistics under the normal case and the general case.

 We below assume that $x_{jk},j=1,\cdots,N,k=1,\cdots,n$ are truncated at $\delta_n\sqrt n$, centralized and renormalized as in the last section. That is to say,
\begin{align*}
|x_{jl}|\le\delta_n\sqrt n, \ \re x_{jl}=0, \ \re x_{jl}^2=1, \ \re x_{jl}^4=3+o(1).
\end{align*}

\subsection{From the general case to the Gaussian case}
Denote ${\bf A}_n=\frac1N\bt_{2n}^{1/2}{\bf Y}_n\bt_{1n}{\bf Y}_{n}'\bt_{2n}^{1/2}$ where the entries of ${\bf Y}_n=(y_{jk})$ are independent real Gaussian random variables such that
\begin{align*}
\re y_{jk}=0,\quad \re y_{jk}^2=1,\quad {\rm for} \ j=1\cdots N,k=1,\cdots,n.
\end{align*}
Moreover, suppose that $\bx_n$ and ${\bf Y}_n$ be independent random matrices. As in \cite{distri} for any $\theta\in[0,\pi/2]$, we introduce the following matrices
\begin{align}\label{eq31}
{\bf W}_n(\theta)=\bx_n\sin\theta+{\bf Y}_n\cos\theta\quad{\rm and}\quad{\bf G}_n(\theta)=\frac1N\bt_{2n}^{1/2}{\bf W}_n\bt_{1n}{\bf W}_{n}'\bt_{2n}^{1/2}
\end{align}
where
$$\left({\bf W}_n(\theta)\right)_{jk}=w_{jk}=x_{jk}\sin\theta+{y}_{jk}\cos\theta.$$
Furthermore, let
\begin{align}\label{eq32}
&{\bf H}_n(t,\theta)=e^{it{\bf G}_n(\theta)},\ S(\theta)=\rtr f({\bf G}_n(\theta)),\\
&S^0(\theta)=S(\theta)-N\int f(x)dF^{c_n,H_{1n},H_{2n}}(x), \ Z_{n}(x,\theta)=\re e^{ixS^0(\theta)}\notag.
\end{align}
For simplicity, we omit the argument $\theta$ from the notations of ${\bf W}_n(\theta),{\bf G}_n(\theta),{\bf H}_n(t,\theta)$ and denote them by ${\bf W}_n,{\bf G}_n,{\bf H}_n(t)$ respectively.

%Define the limiting characteristic function $Z(x,\theta)=\lim_{n\to\infty}Z_n(x,\theta)$.
Note that
\begin{align}\label{eq29}
Z_n(x,\pi/2)-Z_n(x,0)=\int_0^{\pi/2}\frac{\partial Z_n(x,\theta)}{\partial \theta}d\theta.
\end{align}
The aim is to prove that $\frac{\partial Z_n(x,\theta)}{\partial \theta}$ converges to zero uniformly in $\theta$ over the interval $[0,\pi/2]$,
%As in \cite{Lytova22} and \cite{distri}, it is then sufficient to prove that any converging subsequences $\{Z_{n_l}\}$ and $\left\{\frac{\partial Z_{n_l}}{\partial\theta}\right\}$ satisfy
%\begin{align*}
%\lim_{n_l\to\infty}Z_{n_l}(x,\theta)=Z(x,\theta), \ \lim_{n_l\to\infty}\frac{\partial Z_{n_l}(x,\theta)}{\partial\theta}=0.
%\end{align*}
%In fact, similarly to Lytova and Pastur \cite{Lytova22}, we can deduce that $\frac{\partial Z_n(x,\theta)}{\partial \theta}$ is bounded. By (\ref{eq29}) and the dominated convergence theorem yield
%\begin{align*}
%Z(x,\pi/2)=Z(x,0),
%\end{align*}
which ensures Lemma \ref{th2}.

To this end, let $f(\lambda)$ be a smooth function with the Fourier transform given by
\begin{align*}
\widehat f(t)=\frac1{2\pi}\int_{-\infty}^{\infty}f(\lambda)e^{-it\lambda}d\lambda.
\end{align*}
From Lemma \ref{le3}, we have
\begin{align*}
\frac{\partial Z_{n}(x,\theta)}{\partial \theta}=\frac {2x i} N\sum_{j=1}^N\sum_{k=1}^n\re w_{jk}'\left[\bt_{2n}^{1/2}\widetilde f({\bf G}_n)\bt_{2n}^{1/2}{\bf W}_{n}\bt_{1n}\right]_{jk}e^{ixS^0(\theta)}
\end{align*}
where
$$w_{jk}'=\frac{d w_{jk}}{d\theta}=x_{jk}\cos\theta-{y}_{jk}\sin\theta$$
and
\begin{align}\label{eq30}
\widetilde f({\bf G}_n)=i\int_{-\infty}^{\infty}u\widehat f(u){\bf H}_n(u)du.
\end{align}
Let ${\bf W}_{n jk}(x)$ denote the corresponding matrix ${\bf W}_{n}$ with $w_{jk}$ replaced by $x$. And let ${\bf G}_{n jk}(x)=\frac1N\bt_{2n}^{1/2}{\bf W}_{n jk}(x)\bt_{1n}{\bf W}_{n jk}'(x)\bt_{2n}^{1/2}$,
\begin{align*}
\varphi_{jk}(x)=\left[\bt_{2n}^{1/2}\widetilde f({\bf G}_{njk}(x))\bt_{2n}^{1/2}{\bf W}_{njk}(x)\bt_{1n}\right]_{jk}e^{ixS^0({\bf G}_{njk}(x))}.
\end{align*}
By Taylor's formula, one finds
\begin{align*}
\varphi_{jk}(w_{jk})=\sum_{l=0}^3\frac1{l!}w_{jk}^l\varphi_{jk}^{(l)}(0)+\frac1{4!}w_{jk}^4\varphi_{jk}^{(4)}(\varrhoup w_{jk})\quad\varrhoup\in(0,1)
\end{align*}
which implies that
\begin{align*}
\frac{\partial Z_{n}(x,\theta)}{\partial \theta}=\frac {2x i} N\sum_{l=0}^3\frac1{l!}\sum_{j=1}^N\sum_{k=1}^n\re w_{jk}'w_{jk}^l\re\varphi_{jk}^{(l)}(0)+\frac {2x i}{4!N} \sum_{j=1}^N\sum_{k=1}^n w_{jk}' w_{jk}^4\varphi_{jk}^{(4)}(\varrhoup w_{jk}).
\end{align*}
It is easy to obtain
\begin{align*}
&\re w_{jk}'w_{jk}^0=0,\quad\quad\quad\qquad\qquad\re w_{jk}'w_{jk}^1=0,\\
&\re w_{jk}'w_{jk}^2=\re w_{jk}^3\sin^2\theta\cos\theta,\quad\re w_{jk}'w_{jk}^3=o(1)\sin^3\theta\cos\theta.
\end{align*}
It follows that
\begin{align*}
\frac{\partial Z_{n}(x,\theta)}{\partial \theta}=&\frac {x i} {N}\sum_{j=1}^N\sum_{k=1}^n\re w_{jk}^3\sin^2\theta\cos\theta\re\varphi_{jk}^{(2)}(0)
+\frac {x i}{12N} \sum_{j=1}^N\sum_{k=1}^n \re w_{jk}' w_{jk}^4\varphi_{jk}^{(4)}(\varrhoup w_{jk})\\
\triangleq&\mathcal{I}_1+\mathcal{I}_2.
\end{align*}
We below analyze $\mathcal{I}_1$ and $\mathcal{I}_2$ term by term.

\subsubsection{The second derivative}\label{se}
We first consider $\mathcal{I}_1$. A direct calculation yields that
\begin{align*}
\varphi_{jk}^{(2)}(w_{jk})=&\left[\bt_{2n}^{1/2}\frac{\partial ^2\widetilde f({\bf G}_{n})}{\partial w_{jk}^2}\bt_{2n}^{1/2}{\bf W}_{n}\bt_{1n}\right]_{jk}e^{ixS^0(\theta)}+2\left[\bt_{2n}^{1/2}\frac{\partial \widetilde f({\bf G}_{n})}{\partial w_{jk}}\bt_{2n}^{1/2}\right]_{jj}\left[\bt_{1n}\right]_{kk}e^{ixS^0(\theta)}\\
&+\frac{6xi}{N}\left[\bt_{2n}^{1/2}\frac{\partial \widetilde f({\bf G}_{n})}{\partial w_{jk}}\bt_{2n}^{1/2}{\bf W}_n\bt_{1n}\right]_{jk}\left[\bt_{2n}^{1/2} \widetilde f({\bf G}_{n})\bt_{2n}^{1/2}{\bf W}_n\bt_{1n}\right]_{jk}e^{ixS^0(\theta)}\\
&+\frac{6xi}{N}\left[\bt_{2n}^{1/2} \widetilde f({\bf G}_{n})\bt_{2n}^{1/2}{\bf W}_n\bt_{1n}\right]_{jk}\left[\bt_{2n}^{1/2} \widetilde f({\bf G}_{n})\bt_{2n}^{1/2}\right]_{jj}\left[\bt_{1n}\right]_{kk}e^{ixS^0(\theta)}\\
&-\frac{4x^2}{N^2}\left[\bt_{2n}^{1/2} \widetilde f({\bf G}_{n})\bt_{2n}^{1/2}{\bf W}_n\bt_{1n}\right]_{jk}^3e^{ixS^0(\theta)}\\
\triangleq&\mathcal{J}_{jk}^1+\mathcal{J}_{jk}^2+\mathcal{J}_{jk}^3+\mathcal{J}_{jk}^4+\mathcal{J}_{jk}^5.
\end{align*}
 Using Lemma \ref{le3}, one finds
\begin{align*}
\mathcal{J}_{jk}^1
=&-\frac2N\int_{-\infty}^{\infty}u\widehat{f}(u)[\bt_{1n}]_{kk}[\bt_{2n}^{1/2}{\bf H}_n\bt_{2n}^{1/2}]_{jj}*[\bt_{2n}^{1/2}{\bf H}_n\bt_{2n}^{1/2}{\bf W}_n\bt_{1n}]_{jk}(u)e^{ixS_0(\theta)}du\\
&-\frac{6i}{N^2}\int_{-\infty}^{\infty}u\widehat{f}(u)[\bt_{2n}^{1/2}{\bf H}_n\bt_{2n}^{1/2}]_{jj}*[\bt_{2n}^{1/2}{\bf H}_n\bt_{2n}^{1/2}{\bf W}_n\bt_{1n}]_{j k}\\
&*[\bt_{1n}{\bf W}_n'\bt_{2n}^{1/2}{\bf H}_n\bt_{2n}^{1/2}{\bf W}_n\bt_{1n}]_{kk}(u)e^{ixS_0(\theta)}du\\
&-\frac {2i}{N^2}\int_{-\infty}^{\infty}u\widehat{f}(u)[\bt_{2n}^{1/2}{\bf H}_n\bt_{2n}^{1/2}{\bf W}_n\bt_{1n}]_{jk}*[\bt_{2n}^{1/2}{\bf H}_n\bt_{2n}^{1/2}{\bf W}_n\bt_{1n}]_{jk}\\
&*[\bt_{2n}^{1/2}{\bf H}_n\bt_{2n}^{1/2}{\bf W}_n\bt_{1n}]_{jk}(u)e^{ixS_0(\theta)}du.
\end{align*}
It is straightforward to check that the moments of $\left\|\bt_{2n}^{1/2}{\bf H}_n\bt_{2n}^{1/2}\right\|$, $\frac1{\sqrt N}\left\|\bt_{2n}^{1/2}{\bf H}_n\bt_{2n}^{1/2}{\bf W}_n\bt_{1n}\right\|$ and
\begin{align}\label{mm}
\frac1N\left\|\bt_{1n}{\bf W}_n'\bt_{2n}^{1/2}{\bf H}_n\bt_{2n}^{1/2}{\bf W}_n\bt_{1n}\right\|
\end{align}
 are bounded.
Applying Lemma \ref{le4}, we obtain
\begin{align*}
&\left|\frac1{N}\sum_{j=1}^N\sum_{k=1}^n\re\mathcal{J}_{jk}^1\right|
\le\frac C{N^{1/4}}\int_{-\infty}^{\infty}\left(|u|^2+|u|^3\right)|\widehat{f}(u)|du\le C N^{-1/4}.
\end{align*}
By the same argument, we get
\begin{align*}
&\left|\frac1{N}\sum_{j=1}^N\sum_{k=1}^n\re\left(\mathcal{J}_{jk}^2+\mathcal{J}_{jk}^3+\mathcal{J}_{jk}^4+\mathcal{J}_{jk}^5\right)\right|
\le C N^{-1/4}.
\end{align*}
Hence,
\begin{align*}
&\left|\mathcal{I}_1\right|\to0\quad{\rm as}\ n\to\infty.
\end{align*}

\subsubsection{The remainder term}
It is straightforward to check that
\begin{align*}
\re w_{jk}' w_{jk}^4\le C\delta_n\sqrt n.
\end{align*}
Let $w$ be a random variable which has the same first, second and fourth moments as $w_{jk}$. We estimate $\re\sup_{w}\varphi_{jk}^{(4)}(w)$. A direct but tedious computation yields
\begin{align*}
&\varphi_{jk}^{(4)}(w_{jk})\\
=&[\bt_{2n}^{1/2}\frac{\partial ^4\widetilde f({\bf G}_{n})}{\partial w_{jk}^4}\bt_{2n}^{1/2}{\bf W}_n\bt_{1n}]_{jk}e^{ixS_0(\theta)}
+4[\bt_{1n}]_{kk}[\bt_{2n}^{1/2}\frac{\partial ^3\widetilde f({\bf G}_{n})}{\partial w_{jk}^3}\bt_{2n}^{1/2}]_{jj}e^{ixS_0(\theta)}\\
&+\frac {10xi}{N}[\bt_{2n}^{1/2}\frac{\partial ^3\widetilde f({\bf G}_{n})}{\partial w_{jk}^3}\bt_{2n}^{1/2}{\bf W}_n\bt_{1n}]_{jk}[\bt_{2n}^{1/2}{\widetilde f({\bf G}_{n})}\bt_{2n}^{1/2}{\bf W}_n\bt_{1n}]_{jk}e^{ixS_0(\theta)}\\
&+\frac {30xi}{N}[\bt_{1n}]_{kk}[\bt_{2n}^{1/2}\frac{\partial ^2\widetilde f({\bf G}_{n})}{\partial w_{jk}^2}\bt_{2n}^{1/2}]_{jj}[\bt_{2n}^{1/2}{\widetilde f({\bf G}_{n})}\bt_{2n}^{1/2}{\bf W}_n\bt_{1n}]_{jk}e^{ixS_0(\theta)}\\
&+\frac {20xi}{N}[\bt_{2n}^{1/2}\frac{\partial ^2\widetilde f({\bf G}_{n})}{\partial w_{jk}^2}\bt_{2n}^{1/2}{\bf W}_n\bt_{1n}]_{jk}[\bt_{2n}^{1/2}\frac{\partial \widetilde f({\bf G}_{n})}{\partial w_{jk}}\bt_{2n}^{1/2}{\bf W}_n\bt_{1n}]_{jk}e^{ixS_0(\theta)}\\
&+\frac {20xi}{N}[\bt_{1n}]_{kk}[\bt_{2n}^{1/2}{\widetilde f({\bf G}_{n})}\bt_{2n}^{1/2}]_{jj}[\bt_{2n}^{1/2}\frac{\partial ^2\widetilde f({\bf G}_{n})}{\partial w_{jk}^2}\bt_{2n}^{1/2}{\bf W}_n\bt_{1n}]_{jk}e^{ixS_0(\theta)}\\
&+\frac {40xi}{N}[\bt_{1n}]_{kk}[\bt_{2n}^{1/2}\frac{\partial \widetilde f({\bf G}_{n})}{\partial w_{jk}}\bt_{2n}^{1/2}]_{jj}[\bt_{2n}^{1/2}\frac{\partial \widetilde f({\bf G}_{n})}{\partial w_{jk}}\bt_{2n}^{1/2}{\bf W}_n\bt_{1n}]_{jk}e^{ixS_0(\theta)}\\
&+\frac {40xi}{N}[\bt_{1n}]_{kk}^2[\bt_{2n}^{1/2}{\widetilde f({\bf G}_{n})}\bt_{2n}^{1/2}]_{jj}[\bt_{2n}^{1/2}\frac{\partial \widetilde f({\bf G}_{n})}{\partial w_{jk}}\bt_{2n}^{1/2}]_{jj}e^{ixS_0(\theta)}\\
&-\frac {40x^2}{N^2}[\bt_{2n}^{1/2}\frac{\partial ^2\widetilde f({\bf G}_{n})}{\partial w_{jk}^2}\bt_{2n}^{1/2}{\bf W}_n\bt_{1n}]_{jk}[\bt_{2n}^{1/2}{\widetilde f({\bf G}_{n})}\bt_{2n}^{1/2}{\bf W}_n\bt_{1n}]_{jk}^2e^{ixS_0(\theta)}\\
&-\frac {60x^2}{N^2}[\bt_{2n}^{1/2}\frac{\partial \widetilde f({\bf G}_{n})}{\partial w_{jk}}\bt_{2n}^{1/2}{\bf W}_n\bt_{1n}]_{jk}^2[\bt_{2n}^{1/2}{\widetilde f({\bf G}_{n})}\bt_{2n}^{1/2}{\bf W}_n\bt_{1n}]_{jk}e^{ixS_0(\theta)}\\
&-\frac {80x^2}{N^2}[\bt_{1n}]_{kk}[\bt_{2n}^{1/2}\frac{\partial \widetilde f({\bf G}_{n})}{\partial w_{jk}}\bt_{2n}^{1/2}]_{jj}[\bt_{2n}^{1/2}{\widetilde f({\bf G}_{n})}\bt_{2n}^{1/2}{\bf W}_n\bt_{1n}]_{jk}^2e^{ixS_0(\theta)}\\
&-\frac {60x^2}{N^2}[\bt_{1n}]_{kk}^2[\bt_{2n}^{1/2}{ \widetilde f({\bf G}_{n})}\bt_{2n}^{1/2}]_{jj}^2[\bt_{2n}^{1/2}{\widetilde f({\bf G}_{n})}\bt_{2n}^{1/2}{\bf W}_n\bt_{1n}]_{jk}e^{ixS_0(\theta)}\\
&-\frac {80x^3i}{N^3}[\bt_{2n}^{1/2}\frac{\partial \widetilde f({\bf G}_{n})}{\partial w_{jk}}\bt_{2n}^{1/2}{\bf W}_n\bt_{1n}]_{jk}[\bt_{2n}^{1/2}{\widetilde f({\bf G}_{n})}\bt_{2n}^{1/2}{\bf W}_n\bt_{1n}]_{jk}^3e^{ixS_0(\theta)}\\
&-\frac {80x^3i}{N^3}[\bt_{1n}]_{kk}[\bt_{2n}^{1/2}{ \widetilde f({\bf G}_{n})}\bt_{2n}^{1/2}]_{jj}[\bt_{2n}^{1/2}{\widetilde f({\bf G}_{n})}\bt_{2n}^{1/2}{\bf W}_n\bt_{1n}]_{jk}^3e^{ixS_0(\theta)}\\
&-\frac {120x^2}{N^2}[\bt_{1n}]_{kk}[\bt_{2n}^{1/2}\frac{\partial \widetilde f({\bf G}_{n})}{\partial w_{jk}}\bt_{2n}^{1/2}{\bf W}_n\bt_{1n}]_{jk}[\bt_{2n}^{1/2}{\widetilde f({\bf G}_{n})}\bt_{2n}^{1/2}{\bf W}_n\bt_{1n}]_{jk}\\
&\times[\bt_{2n}^{1/2}{ \widetilde f({\bf G}_{n})}\bt_{2n}^{1/2}]_{jj}e^{ixS_0(\theta)}
+\frac{16x^4}{N^4}[\bt_{2n}^{1/2}{\widetilde f({\bf G}_{n})}\bt_{2n}^{1/2}{\bf W}_n\bt_{1n}]_{jk}^5e^{ixS_0(\theta)}.
\end{align*}
We only estimate the first term $[\bt_{2n}^{1/2}\frac{\partial ^4\widetilde f({\bf G}_{n})}{\partial w_{jk}^4}\bt_{2n}^{1/2}{\bf W}_n\bt_{1n}]_{jk}e^{ixS_0(\theta)}$ and the others are similar. Simple calculations imply that
\begin{align*}
&[\bt_{2n}^{1/2}\frac{\partial ^4\widetilde f({\bf G}_{n})}{\partial w_{jk}^4}\bt_{2n}^{1/2}{\bf W}_n\bt_{1n}]_{jk}e^{ixS_0(\theta)}\\
=&-\frac{24i}{N^2}\int_{-\infty}^{\infty}u\widehat{f}(u)[\bt_{1n}]_{kk}^2[\bt_{2n}^{1/2}{\bf H}_n\bt_{2n}^{1/2}]_{jj}*[\bt_{2n}^{1/2}{\bf H}_n\bt_{2n}^{1/2}]_{jj}\\
&*[\bt_{2n}^{1/2}{\bf H}_n\bt_{2n}^{1/2}{\bf W}_n\bt_{1n}]_{jk}(u)e^{ixS_0(\theta)}du\\
&+\frac{144}{N^3}\int_{-\infty}^{\infty}u\widehat{f}(u)[\bt_{1n}]_{kk}[\bt_{2n}^{1/2}{\bf H}_n\bt_{2n}^{1/2}]_{jj}*[\bt_{2n}^{1/2}{\bf H}_n\bt_{2n}^{1/2}{\bf W}_n\bt_{1n}]_{jk}\\
&*[\bt_{2n}^{1/2}{\bf H}_n\bt_{2n}^{1/2}{\bf W}_n\bt_{1n}]_{jk}*[\bt_{2n}^{1/2}{\bf H}_n\bt_{2n}^{1/2}{\bf W}_n\bt_{1n}]_{jk}(u)e^{ixS_0(\theta)}du\\
&+\frac {144}{N^3}\int_{-\infty}^{\infty}u\widehat{f}(u)[\bt_{1n}]_{kk}[\bt_{2n}^{1/2}{\bf H}_n\bt_{2n}^{1/2}]_{jj}*[\bt_{2n}^{1/2}{\bf H}_n\bt_{2n}^{1/2}]_{jj}\\
&*[\bt_{2n}^{1/2}{\bf H}_n\bt_{2n}^{1/2}{\bf W}_n\bt_{1n}]_{jk}
*[\bt_{1n}{\bf W}_n'\bt_{2n}^{1/2}{\bf H}_n\bt_{2n}^{1/2}{\bf W}_n\bt_{1n}]_{kk}(u)e^{ixS_0(\theta)}du\\
&+\frac {240i}{N^4}\int_{-\infty}^{\infty}u\widehat{f}(u)[\bt_{2n}^{1/2}{\bf H}_n\bt_{2n}^{1/2}]_{jj}*[\bt_{2n}^{1/2}{\bf H}_n\bt_{2n}^{1/2}{\bf W}_n\bt_{1n}]_{jk}*[\bt_{2n}^{1/2}{\bf H}_n\bt_{2n}^{1/2}{\bf W}_n\bt_{1n}]_{jk}\\
&*[\bt_{2n}^{1/2}{\bf H}_n\bt_{2n}^{1/2}{\bf W}_n\bt_{1n}]_{jk}
*[\bt_{1n}{\bf W}_n'\bt_{2n}^{1/2}{\bf H}_n\bt_{2n}^{1/2}{\bf W}_n\bt_{1n}]_{kk}(u)e^{ixS_0(\theta)}du\\
&+\frac {120i}{N^4}\int_{-\infty}^{\infty}u\widehat{f}(u)[\bt_{2n}^{1/2}{\bf H}_n\bt_{2n}^{1/2}]_{jj}*[\bt_{2n}^{1/2}{\bf H}_n\bt_{2n}^{1/2}]_{jj}*[\bt_{2n}^{1/2}{\bf H}_n\bt_{2n}^{1/2}{\bf W}_n\bt_{1n}]_{jk}\\
&*[\bt_{1n}{\bf W}_n'\bt_{2n}^{1/2}{\bf H}_n\bt_{2n}^{1/2}{\bf W}_n\bt_{1n}]_{kk}
*[\bt_{1n}{\bf W}_n'\bt_{2n}^{1/2}{\bf H}_n\bt_{2n}^{1/2}{\bf W}_n\bt_{1n}]_{kk}(u)e^{ixS_0(\theta)}du\\
&+\frac {24i}{N^4}\int_{-\infty}^{\infty}u\widehat{f}(u)[\bt_{2n}^{1/2}{\bf H}_n\bt_{2n}^{1/2}{\bf W}_n\bt_{1n}]_{jk}*[\bt_{2n}^{1/2}{\bf H}_n\bt_{2n}^{1/2}{\bf W}_n\bt_{1n}]_{jk}*[\bt_{2n}^{1/2}{\bf H}_n\bt_{2n}^{1/2}{\bf W}_n\bt_{1n}]_{jk}\\
&*[\bt_{2n}^{1/2}{\bf H}_n\bt_{2n}^{1/2}{\bf W}_n\bt_{1n}]_{jk}*[\bt_{2n}^{1/2}{\bf H}_n\bt_{2n}^{1/2}{\bf W}_n\bt_{1n}]_{jk}(u)e^{ixS_0(\theta)}du\\
\triangleq&\mathcal{Q}_{jk}^1+\mathcal{Q}_{jk}^2+\mathcal{Q}_{jk}^3+\mathcal{Q}_{jk}^4+\mathcal{Q}_{jk}^5+\mathcal{Q}_{jk}^6.
\end{align*}
From (\ref{mm}), we deduce that
\begin{align*}
\left|\frac1N\sum_{j,k}\re \mathcal{Q}_{jk}^1\right|\le& \frac C{ N}\int_{-\infty}^{\infty}|u|^3\widehat{f}(u)\re\left\|\bt_{2n}^{1/2}{\bf H}_n\bt_{2n}^{1/2}\right\|^2\left\|\bt_{2n}^{1/2}{\bf H}_n\bt_{2n}^{1/2}{\bf W}_n\bt_{1n}\right\|du\\
\le& \frac C{\sqrt N}\int_{-\infty}^{\infty}|u|^3\widehat{f}(u)du=O(N^{-1/2}).
\end{align*}
Applying the same arguments as above one can conclude that
\begin{align*}
&\left|\frac1N\sum_{j,k}\re \left(\mathcal{Q}_{jk}^2+\mathcal{Q}_{jk}^3+\mathcal{Q}_{jk}^4+\mathcal{Q}_{jk}^5+\mathcal{Q}_{jk}^6\right)\right|\\
\le& \frac C{\sqrt N}\int_{-\infty}^{\infty}\left(|u|^4+|u|^5\right)\widehat{f}(u)du=O(N^{-1/2}).
\end{align*}
It follows that
\begin{align*}
|\mathcal{I}_2|\le C\delta_n\to0.
\end{align*}
This fact finishes the proof of Lemma \ref{th2}.
\begin{appendix}

\section{  }
This section is to prove some lemmas which are used in the proof of Lemma \ref{th2}.

\begin{lemma}\label{add}
Under the conditions of Theorem \ref{th1}, we have for $z\in\mathcal{C}_u$ and $p\ge1$
\begin{align*}
\re|\beta_k(z)|^p\le C, \ \re|\widetilde\beta_k(z)|\le C, \ |b_k(z)|\le C, \ |\psi_k(z)|\le C
\end{align*}
where $\beta_k(z),\widetilde\beta_k(z),b_k(z),\psi_k(z)$ are defined in (\ref{eq5}) and (\ref{eq6}).
\end{lemma}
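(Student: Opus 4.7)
The plan rests on the basic observation that on $\mathcal{C}_u$ the resolvents $\bd^{-1}(z)$ and $\bd_k^{-1}(z)$ have operator norm bounded by $v_0^{-1}$ (since $\Im z=v_0$ and both matrices are real symmetric). Combined with standard resolvent identities and moment bounds for quadratic forms, this yields all four estimates.

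For $\re|\beta_k(z)|^p$: the Sherman--Morrison formula applied to the rank-one perturbation $\bd(z)=\bd_k(z)+N^{-1}s_k\by_k\by_k'$ gives the clean identity
\[
\beta_k(z)\;=\;1-N^{-1}s_k\,\by_k'\bd^{-1}(z)\by_k,
\]
so $|\beta_k(z)|\le 1+\tau v_0^{-1}N^{-1}\|\by_k\|^2$. Applying Lemma \ref{lep1} to the truncated entries of $\bx_k$ (with $\|\bt_{2n}\|\le\tau$) gives $\re\|\by_k\|^{2p}\le C_pN^p$, whence $\re|\beta_k(z)|^p\le C$.

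For the deterministic quantity $|\psi_k(z)|$ I would show that $|1+s_k\re g_{2n}(z)|$ is bounded below uniformly in $s_k\in[-\tau,\tau]$ and $z\in\mathcal{C}_u$. Since $\re g_{2n}(z)$ is the expectation of a Stieltjes transform of a finite positive measure and so is a Herglotz function of $z$ vanishing at infinity, and since it converges uniformly on the compact contour $\mathcal{C}_u$ to the limiting Stieltjes transform $g_2(z)$ of \cite{zlx}, it suffices to obtain the lower bound for the limit. Splitting cases on $|s_k|$: when $|s_k|\,\sup_{\mathcal{C}_u}|g_2|<1/2$ the triangle inequality gives $|1+s_kg_2(z)|\ge 1/2$, and when $|s_k|$ is bounded away from $0$ the strict positivity of $\Im g_2$ together with compactness of $\mathcal{C}_u$ yields $|1+s_kg_2(z)|\ge|s_k|\,\Im g_2(z)\ge c>0$. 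The bound $|b_k(z)|\le C$ then follows immediately once I invoke Lemma \ref{lep3} to conclude $|b_k^{-1}-\psi_k^{-1}|=O(1/N)$ from the rank-one difference between $\bd$ and $\bd_k$.

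Finally, for $\re|\widetilde\beta_k|$ I would first dispose of small $s_k$ by triangle inequality: if $|s_k|\le v_0/(2\tau)$ then $|N^{-1}s_k\rtr(\bd_k^{-1}(z)\bt_{2n})|\le 1/2$, giving $|\widetilde\beta_k|\le 2$. For $|s_k|>v_0/(2\tau)$ I would decompose the sample space by the size of the trace fluctuation
\[
\widetilde\beta_k^{-1}-b_k^{-1}\;=\;N^{-1}s_k\bigl(\rtr(\bd_k^{-1}\bt_{2n})-\re\rtr(\bd_k^{-1}\bt_{2n})\bigr),
\]
whose second moment is $O(N^{-1})$ by Lemma \ref{lep6}. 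On the ``good'' event where this fluctuation is at most $|b_k^{-1}|/2$ in modulus the previous bound transfers to $|\widetilde\beta_k|\le 2|b_k|\le C$; its complement has probability $O(N^{-1})$ by Chebyshev. The main obstacle is the ``bad'' event, where I would invoke the pathwise estimate $|\widetilde\beta_k|\le|\Im\widetilde\beta_k^{-1}|^{-1}$ (available because $\Im\widetilde\beta_k^{-1}=N^{-1}s_k\rtr(\Im\bd_k^{-1}(z)\bt_{2n})$ has the definite sign of $s_k$) together with the lower bound $\rtr(\Im\bd_k^{-1}(z)\bt_{2n})\ge v_0\rtr(\bt_{2n})/((\|\bb_{nk}\|+|u|)^2+v_0^2)$; the extreme-eigenvalue tail (\ref{max}) then controls the negative moments of the denominator, and Cauchy--Schwarz combines the two contributions to give $\re|\widetilde\beta_k|\le C$.
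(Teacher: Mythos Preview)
Your proposal is correct, but the route differs from the paper's in a couple of places. For $\beta_k$ the paper does not use the Sherman--Morrison identity $\beta_k=1-N^{-1}s_k\by_k'\bd^{-1}\by_k$; instead it argues by a dichotomy on $|s_k|\|\by_k\|^2/(Nv_0)$, using the triangle inequality in the small case and the lower bound $|\beta_k^{-1}|\ge|\Im\beta_k^{-1}|$ (together with the spectral decomposition of $\bb_{nk}$) in the large case, and finally invokes the extreme--eigenvalue tails (\ref{max})--(\ref{min}) to control the resulting polynomial in $\max_j\lambda_j^2$. Your identity-based argument is cleaner on $\mathcal{C}_u$ since $\|\bd^{-1}\|\le v_0^{-1}$ unconditionally; interestingly, the paper does use exactly your identity later, in Lemma~\ref{lep7}, for the harder contour $\mathcal{C}_n$. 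For $\psi_k$ and $b_k$ the paper's ``similarly'' means the same imaginary-part dichotomy applied to $\re g_{2n}$ directly, whereas you pass to the limit $g_2$ via the almost-sure convergence $g_{2n}\to g_2$ from \cite{zlx}; this is legitimate and non-circular (that convergence does not rely on the present lemma), but it imports more external input than the paper's self-contained estimate. Your treatment of $\widetilde\beta_k$, mixing a small-$s_k$ triangle-inequality case with a good/bad-event split and the imaginary-part bound on the bad event, is essentially the paper's method. One minor point: your bad-event bound for $\widetilde\beta_k$ tacitly uses $N^{-1}\rtr(\bt_{2n})\ge c>0$, which follows from $H_2\ne\delta_0$ and the boundedness of $\|\bt_{2n}\|$; this is harmless since the degenerate case is excluded in the statement of the LSD.
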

\begin{proof}
We only prove $\re|\beta_k(z)|^p\le C$ and the others are similar. Note that
\begin{align*}
N^{-1}\left|\by_k'\bd_k^{-1}(z)\by_k\right|\le\frac{\|\by_k\|^2}{Nv_0}
\end{align*}
which gives
\begin{align}\label{eq7}
|\beta_k(z)|\le\frac1{1-\frac{\left|s_k\right|\|\by_k\|^2}{Nv_0}}<2\quad{\rm if}\ \frac{\left|s_k\right|\|\by_k\|^2}{Nv_0}\le1/2
\end{align}
where $\|\by_k\|^2=\sum_{j=1}^N y_{j k}^2$. Denoting by ${\bf O}\boldsymbol\Lambda{\bf O}^*$ the spectral decomposition of $\bb_{nk}$ and $\boldsymbol\Lambda={\rm diag}\left(\lambda_1,\cdots,\lambda_N\right)$, we obtain
\begin{align}\label{eq8}
|\beta_k(z)|\le&\frac1{\left|\Im\left(N^{-1}s_k\by_k'\bd_k^{-1}(z)\by_k\right)\right|}
=\frac1{\left|N^{-1}s_kv_0\sum_{j=1}^N\frac{\left({\bf O}^*\by_k\by_k'{\bf O}\right)_{jj}}{\left(\lambda_j-u\right)^2+v_0^2}\right|}\notag\\
\le&\frac{{2\max_{1\le j\le N}\lambda_j^2+2|z|^2}}{N^{-1}v_0\|\by_k\|^2\left|s_k\right|}
\le\frac{{4\max_{1\le j\le N}\lambda_j^2+4|z|^2}}{v_0^2}\quad{\rm if}\ \frac{\left|s_k\right|\|\by_k\|^2}{Nv_0}>1/2.
\end{align}
Combining (\ref{eq7}) with (\ref{eq8}), we have
\begin{align*}
|\beta_k(z)|
\le\frac{{4\max_{1\le j\le N}\lambda_j^2+4|z|^2}}{v_0^2}+2\le\frac{{4\max_{1\le j\le N}\lambda_j^2+6|z|^2}}{v_0^2}.
\end{align*}
Moreover,
\begin{align*}
\max_{1\le j\le N}\lambda_j^2\le \rtr \bb_{nk}^2\le \frac{\tau^4}{N}\left({\bf x}_k'{\bf x}_k\right)^2\le \tau^4\delta_n^4Nn^2\le CN^3.
\end{align*}
It follows that
\begin{align}\label{eq9}
|\beta_k(z)|\le\begin{cases}
\frac{{4\eta_l^2+4\eta_r^2+6|z|^2}}{v_0^2}\le C,&{\rm if} \ \eta_l\le \lambda_{\min}\le\lambda_{\max}\le\eta_r,\\
CN^3,&{\rm otherwise}.
\end{cases}
\end{align}
Using (\ref{max}), (\ref{min}) and (\ref{eq9}), we have for suitably large $l$
\begin{align*}
\re|\beta_k(z)|^p\le&\re|\beta_k(z)|^pI(\eta_l\le \lambda_{\min}\le\lambda_{\max}\le\eta_r)+\re|\beta_k(z)|^pI(\lambda_{\min}<\eta_l \ {\rm or} \ \lambda_{\max}>\eta_r)\\
\le&C+{{CN^3}}{\rm P}(\lambda_{\min}<\eta_l \ {\rm or} \ \lambda_{\max}>\eta_r)\\
\le&C+{{CN^3}}n^{-l}\le C.
\end{align*}
This completes the proof of this lemma.
\end{proof}

\begin{lemma}\label{lep10}
Under the conditions of Theorem \ref{th1}, we have
\begin{align*}
\sup_{z\in\mathcal{C}_n}|\re g_{2n}(z)-g_{2n}^0(z)|\to0.
\end{align*}
\end{lemma}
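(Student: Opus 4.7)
The plan is to establish pointwise convergence $\re g_{2n}(z)-g_{2n}^0(z)\to 0$ on $\mathcal{C}_n$ and then upgrade to uniform convergence via an equicontinuity argument.

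The pointwise step relies on the resolvent identity (\ref{al:6}), which after rewriting $\bd(z)=\bw(z)+\frac1N\sum_j s_j(\by_j\by_j'-\psi_j(z)\bt_{2n})$, taking inverses, and taking expectations reads
$$\bw^{-1}(z)-\re\bd^{-1}(z)=\bw^{-1}(z)\re\left[\tfrac1N\sum_j s_j\beta_j(z)\by_j\by_j'\bd_j^{-1}(z)-\tfrac1N\sum_j s_j\psi_j(z)\bt_{2n}\bd^{-1}(z)\right].$$
Multiplying by $\bt_{2n}$, taking the trace, dividing by $N$, and using the expansion (\ref{al:1}) for $\beta_j$, I would reduce the right hand side to sums of terms of the form $\re\varepsilon_j(z)(\cdot)$ and $\re(\beta_j-\psi_j)(\cdot)$, each of which is $o(1)$ by Lemma \ref{lep1}, Lemma \ref{lep6}, Lemma \ref{lep7}, together with the uniform bound $\|\bw^{-1}(z)\|\le C$ already proved at (\ref{al12}). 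This yields
$$\re g_{2n}(z)=\tfrac1N\rtr\bw^{-1}(z)\bt_{2n}+o(1)=\int\frac{t}{\tfrac1N\sum_j s_j\psi_j(z)\,t-z}\,dH_{2n}(t)+o(1).$$
Since $g_{2n}^0$ satisfies the exact identity $g_{2n}^0(z)=\int t\,(zg_{1n}^0(z)t-z)^{-1}dH_{2n}(t)$ and $\tfrac1N\sum_j s_j\psi_j(z)+zg_{1n}^0(z)=o(1)$ by (\ref{al:3}), a standard uniqueness argument for the fixed point of the system (\ref{eqs}) then gives $\re g_{2n}(z)-g_{2n}^0(z)\to 0$ pointwise.

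The delicate point is the behavior on $\mathcal{C}_l\cup\mathcal{C}_r$, where $\Im z$ can be as small as $n^{-1}\varepsilon_n$, so the naive bound $\|\bd^{-1}(z)\|\le v^{-1}$ is of order $n^{1+\alpha}$. The remedy, mirroring the technique already used in the paper to bound $\re\|\bd_1^{-1}(z)\|^p$, is to split on the good event $\Omega_n=\{\eta_l\le\lambda_{\min}(\bb_n)\le\lambda_{\max}(\bb_n)\le\eta_r\}$: on $\Omega_n$ one has the $v$-independent bound $\|\bd^{-1}(z)\|\le\max(1/(x_r-\eta_r),1/(\eta_l-x_l))$, while on $\Omega_n^c$ the probability decay $\mathrm{P}(\Omega_n^c)=o(n^{-\ell})$ supplied by (\ref{max})--(\ref{min}) crushes the polynomial bound $\|\bd^{-1}(z)\|\le n^{1+\alpha}$ in any prescribed moment. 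This allows every resolvent norm appearing in the expansion to be treated as $O(1)$ uniformly in $z\in\mathcal{C}_n$.

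Finally, to pass from pointwise to uniform convergence, I would verify that $\{\re g_{2n}-g_{2n}^0\}$ is uniformly Lipschitz on $\mathcal{C}_n$. Indeed $|\partial_z g_{2n}(z)|=|\tfrac1N\rtr(\bd^{-2}(z)\bt_{2n})|\le\tau\|\bd^{-1}(z)\|^2$, which on $\Omega_n$ is bounded by a constant, and combined with the rare-event argument of the previous paragraph gives a uniform Lipschitz constant after taking expectations; $g_{2n}^0$ is analytic with a matching derivative bound on $\mathcal{C}_n$ by (\ref{al12}) applied to its deterministic analogue. A covering of $\mathcal{C}_n$ by $O(n^A)$ balls of radius $n^{-A}$ together with the pointwise convergence at the centers then upgrades to uniform convergence. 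The main obstacle is clearly the simultaneous quantitative control of all error terms on the segments $\mathcal{C}_l\cup\mathcal{C}_r$, where the imaginary part vanishes; it is precisely here that the rare-event estimates (\ref{max}) and (\ref{min}) are indispensable.
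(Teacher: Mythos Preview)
Your proposal has a genuine circularity problem. You invoke both (\ref{al:3}) and (\ref{al12}) as inputs, but in the paper these facts are themselves derived \emph{from} Lemma~\ref{lep10}: equation (\ref{al:7}) is obtained by citing Lemma~\ref{lep10}, then (\ref{al:3}) follows from (\ref{al:7}), and (\ref{al12}) in turn rests on (\ref{al:3}). So your argument as written assumes what it sets out to prove. The resolvent-comparison route through $\bw^{-1}$ is exactly what requires Lemma~\ref{lep10} to get off the ground on $\mathcal{C}_l\cup\mathcal{C}_r$, and you cannot use it here.

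The paper avoids this trap by never touching $\bw^{-1}$ in the proof of Lemma~\ref{lep10}. Instead it combines the exact identity $\underline m_n(z)=-\frac1{zn}\sum_k\beta_k(z)$ from (\ref{al:8}) with $\underline m_n^0(z)=-\frac1{zn}\sum_k(1+s_kg_{2n}^0(z))^{-1}$ from (\ref{i1}) and the estimate $\frac1n\sum_j|\re\beta_j(z)-\psi_j(z)|=O(N^{-1})$ from (\ref{al:5}); the latter needs only bounds on $b_j,\psi_j,\beta_j,\|\bd_j^{-1}\|$, all of which are already available on $\mathcal{C}_n$ via the rare-event splitting you describe and the direct bound $\sup_{\mathcal{C}_l\cup\mathcal{C}_r}|\re g_{2n}|<3/(4\tau)$ obtained from (\ref{eq18}). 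This immediately gives
\[
|\re g_{2n}(z)-g_{2n}^0(z)|\cdot\Bigl|\tfrac1n\sum_j\tfrac{s_j}{(1+s_j\re g_{2n}(z))(1+s_jg_{2n}^0(z))}\Bigr|\to 0
\]
uniformly on $\mathcal{C}_n$, and the remaining work is to show the second factor is bounded away from zero. Your phrase ``a standard uniqueness argument for the fixed point of the system (\ref{eqs})'' hides exactly this step; the paper does it by contradiction, arguing that if the factor vanished along a sequence $z_h\to z_0$ then $(z\underline m(z))'|_{z_0}=0$ by (\ref{i1}), which would force a second solution of (\ref{eqs}) and contradict uniqueness. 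This is the genuine content of the lemma, and your sketch does not supply it.
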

\begin{proof}
 Similarly to the proof of (4.1) in \cite{bai2004clt}, one can check that
\begin{align*}
\sup_{z\in\mathcal{C}_n}|\re \underline m_n(z)-\underline m(z)|\to0\quad{\rm as}\ n\to\infty
\end{align*}
By Lemma \ref{lep1}, (\ref{al:5}) and (\ref{al:8}), we have
\begin{align*}
\sup_{z\in\mathcal{C}_n}\left|\frac1{n}\sum_{j=1}^n\left(\re\beta_j(z)-\psi_j(z)\right)\right|\le C |\re\beta_j(z)-\psi_j(z)|
\le \frac CN\to0.
\end{align*}
Note that
\begin{align*}
&|\re g_{2n}(z)-g_{2n}^0(z)|\left|\frac1{n}\sum_{j=1}^n\frac{s_j}{\left({1+s_j\re g_{2n}(z)}\right)\left({1+s_jg_{2n}^0(z)}\right)}\right|\\
=&
\left|\frac1{n}\sum_{j=1}^n\left(\psi_j(z)-\frac1{1+s_jg_{2n}^0(z)}\right)\right|\\
\le&\left|\frac1{n}\sum_{j=1}^n\left(\re\beta_j(z)-\psi_j(z)\right)\right|
+|z||\re \underline m_n(z)-\underline m(z)|+|z||\underline m_n^0(z)-\underline m(z) |\to 0.
\end{align*}
Consequently, it suffices to prove that $\left|\frac1{n}\sum_{j=1}^n\frac{s_j}{\left({1+s_j\re g_{2n}^0(z)}\right)\left({1+s_jg_{2n}^0(z)}\right)}\right|$ has a lower bound. We prove it by contradiction. By (\ref{i1}), one has
\begin{align}\label{ab}
(z\underline m (z))'=g_2'(z)\int\frac{x}{(1+g_2(z)x)^2}dH_1.
\end{align}
Suppose that there exists a sequence $\{z_h\in\mathcal{C}_n\}$ such that $z_h\to z_{0}$ and
\begin{align*}
\int\frac{x}{(1+g_2(z_h)x)^2}dH_1(x)\to0.
\end{align*}
From (\ref{ab}) and the continuity of $g_2(z)$, it follows that
\begin{align}\label{ac}
(z_h\underline m (z_h))'\to (z_{0}\underline m (z_{0}))'=0.
\end{align}
Then from the equation $m(z_0)+z_0m'(z_0)=0$, we obtain another solution of $m(z)$ which has nothing to do with $H_1(z)$ and $H_2(z)$. However,
this contradicts to the fact that $m(z)$ is a unique solution of (\ref{eqs}). Hence, we get
\begin{align*}
\left|\int\frac{x}{(1+g_2(z)x)^2}dH_1(x)\right|>0.
\end{align*}
By continuity and convergence of $\re g_{2n}(z)$, we see that for all large $n$
$$\left|\frac1{n}\sum_{j=1}^n\frac{s_j}{\left({1+s_j\re g_{2n}(z)}\right)\left({1+s_jg_{2n}^0(z)}\right)}\right|>0.$$
Therefore, we conclude that
\begin{align*}
\sup_{z\in\mathcal{C}_n}|\re g_{2n}(z)-g_{2n}^0(z)|\to0.
\end{align*}
\end{proof}

\begin{lemma}\label{lep7}
For $z\in\mathcal{C}_n$, we have for any positive $p\ge1$
\begin{align*}
\re|\beta_j(z)|^p \le C^p
\end{align*}
where $\beta_j(z)$ is defined in (\ref{eq5}).
\end{lemma}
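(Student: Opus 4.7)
\medskip

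The plan is to extend the two-case argument of Lemma \ref{add} from $\mathcal{C}_u$ (where $\Im z=v_0$ is fixed) to all of $\mathcal{C}_n$, the only new issue being the side segments $\mathcal{C}_l\cup\mathcal{C}_r$ where $v=\Im z$ may be as small as $n^{-1}\varepsilon_n$. As in the proof of Lemma \ref{add}, I will split $\mathbb{E}|\beta_j(z)|^p$ according to the ``good'' event
\begin{align*}
\Omega_n=\{\eta_l\le\lambda_{\min}(\bb_n)\le\lambda_{\max}(\bb_n)\le\eta_r\},
\end{align*}
which by (\ref{max}) and (\ref{min}) has $\mathrm{P}(\Omega_n^c)=o(n^{-l})$ for every $l>0$. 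By Cauchy interlacing, on $\Omega_n$ the eigenvalues of every $\bb_{nj}$ also lie in $[\eta_l,\eta_r]$.

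For $z\in\mathcal{C}_l\cup\mathcal{C}_r$ restricted to $\Omega_n$, I will obtain a \emph{deterministic} constant bound on $|\beta_j(z)|$ as follows. The rank-one identity
\begin{align*}
\beta_j(z)^{-1}=1+\tfrac{s_j}{N}\by_j'\bd_j^{-1}(z)\by_j=\frac{\det(z\bi-\bb_n)}{\det(z\bi-\bb_{nj})}=\prod_{i=1}^{N}\frac{z-\lambda_i(\bb_n)}{z-\lambda_i(\bb_{nj})}
\end{align*}
together with Cauchy interlacing for rank-one updates $\bb_n=\bb_{nj}+\tfrac{s_j}{N}\by_j\by_j'$ gives (for $z=x_r+iv$, $s_j>0$) the chain $|z-\lambda_i(\bb_{nj})|^2\le|z-\lambda_{i+1}(\bb_n)|^2$, so that
$$|\beta_j(z)|^{-2}\ge |z-\lambda_1(\bb_n)|^2/|z-\lambda_N(\bb_{nj})|^2\ge(x_r-\eta_r)^2/\big((x_r-\eta_l)^2+v_0^2\big)\ge C^{-1}$$
by (\ref{eq18}). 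The case $s_j<0$ reverses interlacing and gives $|\beta_j(z)|\le 1$ directly; the segment $\mathcal{C}_l$ is symmetric. Hence $|\beta_j(z)|\le C$ uniformly on $\Omega_n$ for all $z\in\mathcal{C}_n$.

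On the complementary event $\Omega_n^c$ I will use the crude bound coming from $\bd^{-1}(z)\by_j=\beta_j(z)\bd_j^{-1}(z)\by_j$, which gives
\begin{align*}
|\beta_j(z)|=\frac{\|\bd^{-1}(z)\by_j\|}{\|\bd_j^{-1}(z)\by_j\|}\le\frac{\max_i|\lambda_i(\bb_{nj})-z|}{v}\le\frac{C\,\|\bb_{nj}\|+|z|}{v}.
\end{align*}
After truncation the entries of $\bx_n$ are bounded by $\delta_n\sqrt n$, so $\|\bb_{nj}\|\le CN\delta_n^2$ deterministically, and since $v\ge n^{-1}\varepsilon_n\ge n^{-1-\alpha}$ we obtain $|\beta_j(z)|\le Cn^{A}$ for some fixed $A$. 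Combining,
\begin{align*}
\mathbb{E}|\beta_j(z)|^p\le C^p+Cn^{Ap}\,\mathrm{P}(\Omega_n^c)\le C^p+Cn^{Ap}o(n^{-l})\le C^p,
\end{align*}
upon choosing $l>Ap$.

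The main obstacle is the interlacing bound on $\Omega_n$, since for complex $z$ with $v$ possibly tiny one cannot simply use $\|\bd_j^{-1}(z)\|\le 1/(2\tau^2)$ (which would yield a bound on $\tfrac{s_j}{N}\by_j'\bd_j^{-1}\by_j$ in modulus but not the needed lower bound on $|1+\tfrac{s_j}{N}\by_j'\bd_j^{-1}\by_j|$); the telescoping of interlacing factors in the determinantal identity is what makes the $v\to 0$ limit harmless, because the cancellation of interior factors leaves only the well-separated boundary factors $|z-\lambda_1(\bb_n)|$ and $|z-\lambda_N(\bb_{nj})|$.
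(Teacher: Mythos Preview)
Your overall strategy is sound and the determinantal--interlacing route is legitimate, but there is a gap in the claim that ``By Cauchy interlacing, on $\Omega_n$ the eigenvalues of every $\bb_{nj}$ also lie in $[\eta_l,\eta_r]$.'' Interlacing for the rank-one update $\bb_n=\bb_{nj}+\tfrac{s_j}{N}\by_j\by_j'$ with $s_j>0$ gives $\lambda_i(\bb_{nj})\le\lambda_i(\bb_n)$ and $\lambda_i(\bb_{nj})\ge\lambda_{i+1}(\bb_n)$ for $i<N$, but supplies \emph{no} lower bound on $\lambda_N(\bb_{nj})$; when $s_j<0$ the missing bound is at the top instead. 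Your final inequality on $\mathcal{C}_r$ (and its mirror on $\mathcal{C}_l$) therefore uses $\lambda_N(\bb_{nj})\ge\eta_l$ (respectively $\lambda_1(\bb_{nj})\le\eta_r$), which does not follow from $\Omega_n$ alone. The fix is easy: enlarge the good event to include the analogous spectral constraint on $\bb_{nj}$, which enjoys the same tail estimates (\ref{max})--(\ref{min}) since it has the same structure with one column removed. (Your crude bound on $\Omega_n^c$ is correct in spirit, though $\|\bb_{nj}\|\le CN\delta_n^2$ should read $\le C\tau^2\delta_n^2 n^2$; any polynomial in $n$ suffices here.)

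For comparison, the paper's proof bypasses the interlacing telescoping entirely via the identity $\beta_j(z)=1-\tfrac{s_j}{N}\by_j'\bd^{-1}(z)\by_j$, obtained from (\ref{al2}). On the good event (taken for both $\bb_n$ and $\bb_{nj}$, exactly as you would need after the above fix) one has $\|\bd^{-1}(z)\|\le\max\{(x_r-\eta_r)^{-1},(\eta_l-x_l)^{-1},v_0^{-1}\}$, and writing $\tfrac{s_j}{N}\by_j\by_j'=\bb_n-\bb_{nj}$ gives $|\tfrac{s_j}{N}\by_j'\by_j|\le 2(|\eta_r|+|\eta_l|)$ via extreme eigenvalues; hence $|\beta_j(z)|\le 1+C$ directly. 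This handles all signs of $s_j$ and both vertical segments $\mathcal{C}_l,\mathcal{C}_r$ uniformly, whereas your telescoping argument requires a case split on the sign of $s_j$ and on which segment $z$ lies. Your determinantal viewpoint, on the other hand, makes the cancellation mechanism transparent and would generalise more readily to higher-rank perturbations.
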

\begin{proof}
By formula (\ref{al2}), we get
\begin{align*}
\bd^{-1}(z)-\bd_j^{-1}(z)=-\frac1Ns_j\beta_j(z)\bd^{-1}_j\by_j\by_j'\bd_j^{-1}.
\end{align*}
This yields
\begin{align*}
\frac1Ns_j\by_j'\bd^{-1}(z)\by_j=&\frac1Ns_j\by_j'\bd_j^{-1}(z)\by_j\left(1-\frac1Ns_j\beta_j(z)\by_j'\bd^{-1}_j\by_j\right)\\
=&\frac1Ns_j\beta_j(z)\by_j'\bd_j^{-1}(z)\by_j=1-\beta_j(z).
\end{align*}
If $\eta_l<\lambda_{\min}^{\bb_n}\le\lambda_{\max}^{\bb_n}<\eta_r$ and $\eta_l<\lambda_{\min}^{\bb_{nj}}\le\lambda_{\max}^{\bb_{nj}}<\eta_r$, then we have
\begin{align*}
|\frac1Ns_j\by_j'\by_j|=&\left|\max_{\|{\bf f}\|=1}{\bf f}'\left(\bb_n-\bb_{nj}\right){\bf f}\right|\le
\left|\max_{\|{\bf f}\|=1}{\bf f}'\bb_n{\bf f}\right|+\left|\min_{\|{\bf f}\|=1}{\bf f}'\bb_{nj}{\bf f}\right|\\
=&\left|\lambda_{\max}^{\bb_n}\right|+\left|\lambda_{\min}^{\bb_{nj}}\right|\le2(|\eta_r|+|\eta_l|).
\end{align*}
Otherwise,
\begin{align*}
|\frac1Ns_j\by_j'\by_j|\le\frac{\tau^2}{N}\sum_{k=1}^N|x_{jk}|^2\le\tau^2\delta_n^2n\le n.
\end{align*}
Therefore, one has
\begin{align*}
|\beta_j(z)|=&|1-\frac1Ns_j\by_j'\bd^{-1}(z)\by_j|\le1+2(|\eta_r|+|\eta_l|)\max\{\frac1{x_r-\eta_r},\frac1{\eta_l-x_l},\frac1{v_0}\}\\
&+n^{2+\alpha}I(\lambda_{\min}^{\bb_n}<\eta_l \ {\rm or} \ \lambda_{\max}^{\bb_n}<\eta_r \ {\rm or} \ \lambda_{\min}^{\bb_{nj}}>\eta_l \ {\rm or} \ \lambda_{\max}^{\bb_{nj}}<\eta_r).
\end{align*}
By (\ref{max}) and (\ref{min}), we have for any positive $p\ge1$ and $l>3$
\begin{align*}
\re|\beta_j(z)|^p \le C_1^p+C_2^pn^{-l}\le C^p.
\end{align*}
\end{proof}

\begin{lemma}\label{le1}
Let $1\le j\le N,1\le k\le n$. Recall the definition of ${\bf G}_n$ in (\ref{eq31}). Then, for any $1\le a,b \le N$, we have
\begin{align*}
\frac{\partial g_{ab}}{\partial w_{jk}}=\left[\frac{\partial {\bf G}_n}{\partial w_{jk}}\right]_{ab}=\frac1N\left[\bt_{2n}^{1/2}\right]_{aj}\left[\bt_{1n}{\bf W}_{n}'\bt_{2n}^{1/2}\right]_{kb}
+\frac1N\left[\bt_{2n}^{1/2}{\bf W}_{n}\bt_{1n}\right]_{ak}\left[\bt_{2n}^{1/2}\right]_{jb}.
\end{align*}
\end{lemma}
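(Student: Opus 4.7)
The proof is a direct computation via the product rule, since ${\bf G}_n$ depends on ${\bf W}_n$ quadratically and the only thing being varied is the scalar entry $w_{jk}$. My plan is first to expand the matrix entry in index notation and then read off the derivative term by term.

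Explicitly, write
\begin{align*}
g_{ab}=\frac1N\bigl[\bt_{2n}^{1/2}{\bf W}_n\bt_{1n}{\bf W}_n'\bt_{2n}^{1/2}\bigr]_{ab}
=\frac1N\sum_{p,q,r,s}\bigl[\bt_{2n}^{1/2}\bigr]_{ap}\,w_{pq}\,\bigl[\bt_{1n}\bigr]_{qr}\,w_{sr}\,\bigl[\bt_{2n}^{1/2}\bigr]_{sb}.
\end{align*}
Since the $w$-entries are algebraically independent, $\partial w_{pq}/\partial w_{jk}=\delta_{pj}\delta_{qk}$ and $\partial w_{sr}/\partial w_{jk}=\delta_{sj}\delta_{rk}$. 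Applying the Leibniz rule splits the derivative into exactly two contributions, one for each occurrence of ${\bf W}_n$.

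The first contribution fixes $p=j$, $q=k$ and leaves the sum over $r,s$, producing
\begin{align*}
\frac1N\bigl[\bt_{2n}^{1/2}\bigr]_{aj}\sum_{r,s}\bigl[\bt_{1n}\bigr]_{kr}\,w_{sr}\,\bigl[\bt_{2n}^{1/2}\bigr]_{sb}
=\frac1N\bigl[\bt_{2n}^{1/2}\bigr]_{aj}\bigl[\bt_{1n}{\bf W}_n'\bt_{2n}^{1/2}\bigr]_{kb}.
\end{align*}
The second contribution fixes $s=j$, $r=k$ and leaves the sum over $p,q$, producing
\begin{align*}
\frac1N\bigl[\bt_{2n}^{1/2}\bigr]_{jb}\sum_{p,q}\bigl[\bt_{2n}^{1/2}\bigr]_{ap}\,w_{pq}\,\bigl[\bt_{1n}\bigr]_{qk}
=\frac1N\bigl[\bt_{2n}^{1/2}{\bf W}_n\bt_{1n}\bigr]_{ak}\bigl[\bt_{2n}^{1/2}\bigr]_{jb}.
\end{align*}
Summing the two expressions yields the claimed identity. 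There is no genuine obstacle here; the only thing to be careful about is to correctly identify which indices of $\bt_{1n}$ and $\bt_{2n}^{1/2}$ become free (namely $j,k,a,b$) after the Kronecker deltas from differentiation are contracted, and to keep the factor of $1/N$ throughout. The lemma will be invoked later to produce convolution-type expressions for higher derivatives of $\widetilde f({\bf G}_n)$ (cf.\ the computation in Section~\ref{se}).
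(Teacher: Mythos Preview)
Your proof is correct and essentially identical to the paper's: both apply the product rule to the quadratic dependence of ${\bf G}_n$ on ${\bf W}_n$, you via index expansion with Kronecker deltas and the paper via the matrix identity $\partial{\bf W}_n/\partial w_{jk}={\bf e}_j{\bf e}_k'$, which is the same computation in slightly different notation.
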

\begin{proof}
It is obvious that
\begin{align*}
\frac{\partial {\bf G}_n}{\partial w_{jk}}=&\frac1N\bt_{2n}^{1/2}\frac{\partial{\bf W}_n}{\partial w_{jk}}\bt_{1n}{\bf W}_{n}'\bt_{2n}^{1/2}
+\frac1N\bt_{2n}^{1/2}{\bf W}_{n}\bt_{1n}\frac{\partial{\bf W}_n'}{\partial w_{jk}}\bt_{2n}^{1/2}\\
=&\frac1N\bt_{2n}^{1/2}{\bf e}_j{\bf e}_k'\bt_{1n}{\bf W}_{n}'\bt_{2n}^{1/2}
+\frac1N\bt_{2n}^{1/2}{\bf W}_{n}\bt_{1n}{\bf e}_k{\bf e}_j'\bt_{2n}^{1/2}.
\end{align*}
This yields
\begin{align*}
\left[\frac{\partial {\bf G}_n}{\partial w_{jk}}\right]_{ab}=&\frac1N\left[\bt_{2n}^{1/2}\right]_{aj}\left[\bt_{1n}{\bf W}_{n}'\bt_{2n}^{1/2}\right]_{kb}
+\frac1N\left[\bt_{2n}^{1/2}{\bf W}_{n}\bt_{1n}\right]_{ak}\left[\bt_{2n}^{1/2}\right]_{jb}.
\end{align*}
\end{proof}

\begin{lemma}\label{le2}
Let $1\le j\le N,1\le k\le n$. Recall the definition of ${\bf H}_n(t)$ in (\ref{eq32}). Then for any $1\le d,l \le N$
\begin{align*}
\frac{\partial h_{dl}}{\partial w_{jk}}=\frac i N\left[{\bf H}_n\bt_{2n}^{1/2}\right]_{dj}*\left[\bt_{1n}{\bf W}_{n}'\bt_{2n}^{1/2}{\bf H}_n\right]_{kl}(t)+\frac i N\left[{\bf H}_n\bt_{2n}^{1/2}{\bf W}_{n}\bt_{1n}\right]_{dk}*\left[\bt_{2n}^{1/2}{\bf H}_n\right]_{jl}(t)
\end{align*}
where $h_{dl}=\left({\bf H}_n(t)\right)_{dl}$ and $f*g(t)=\int_0^tf(s)g(t-s)ds$.
\end{lemma}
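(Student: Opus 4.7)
The goal is to differentiate the matrix exponential $\mathbf{H}_n(t) = e^{it\mathbf{G}_n}$ with respect to $w_{jk}$ and then express the result in the claimed convolution form using Lemma~\ref{le1}. My plan is to start from the fact that $\mathbf{H}_n(t)$ solves the matrix ODE $\frac{d}{dt}\mathbf{H}_n(t) = i\mathbf{G}_n \mathbf{H}_n(t)$ with initial condition $\mathbf{H}_n(0) = \mathbf{I}$, and then differentiate this ODE with respect to $w_{jk}$. Since $w_{jk}$ enters only through $\mathbf{G}_n$, this yields an inhomogeneous linear ODE for $\partial \mathbf{H}_n/\partial w_{jk}$, namely
\[
\frac{d}{dt}\frac{\partial \mathbf{H}_n(t)}{\partial w_{jk}} = i\frac{\partial \mathbf{G}_n}{\partial w_{jk}}\mathbf{H}_n(t) + i\mathbf{G}_n\frac{\partial \mathbf{H}_n(t)}{\partial w_{jk}},
\]
with vanishing initial condition. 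Solving by variation of constants gives Duhamel's formula
\[
\frac{\partial \mathbf{H}_n(t)}{\partial w_{jk}} = i\int_0^t \mathbf{H}_n(t-s)\,\frac{\partial \mathbf{G}_n}{\partial w_{jk}}\,\mathbf{H}_n(s)\,ds.
\]

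Next I would substitute the expression for $\partial \mathbf{G}_n/\partial w_{jk}$ from Lemma~\ref{le1}. In matrix form, that lemma is equivalent to
\[
\frac{\partial \mathbf{G}_n}{\partial w_{jk}} = \frac1N\bt_{2n}^{1/2}\be_j\be_k'\bt_{1n}\mathbf{W}_n'\bt_{2n}^{1/2} + \frac1N\bt_{2n}^{1/2}\mathbf{W}_n\bt_{1n}\be_k\be_j'\bt_{2n}^{1/2},
\]
so plugging in and taking the $(d,l)$-entry yields two terms. For the first term, taking $(d,l)$ gives
\[
\frac{i}{N}\int_0^t \bigl[\mathbf{H}_n(t-s)\bt_{2n}^{1/2}\bigr]_{dj}\bigl[\bt_{1n}\mathbf{W}_n'\bt_{2n}^{1/2}\mathbf{H}_n(s)\bigr]_{kl}\,ds,
\]
which is by definition the convolution $[\mathbf{H}_n\bt_{2n}^{1/2}]_{dj}*[\bt_{1n}\mathbf{W}_n'\bt_{2n}^{1/2}\mathbf{H}_n]_{kl}(t)$. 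The second term gives the corresponding convolution with the roles of the two factors reversed. Adding the two yields precisely the claimed formula.

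There is essentially no analytic obstacle here: both $\mathbf{G}_n$ and $\mathbf{H}_n(t)$ are smooth functions of the finitely many entries $w_{jk}$, so differentiation under the integral that defines $\mathbf{H}_n$ (equivalently, the convergent power series for the exponential) is justified, and the matrix ODE manipulations are routine. The only mildly delicate point is that $\mathbf{G}_n$ and $\partial \mathbf{G}_n/\partial w_{jk}$ do not commute, which is exactly why Duhamel's formula must be used rather than the naive chain rule $\partial e^{itA}/\partial w = itA'e^{itA}$; handling this non-commutativity correctly is the main thing to be careful about, and it is what produces the convolution structure rather than a single product.
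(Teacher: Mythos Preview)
Your proposal is correct and follows essentially the same approach as the paper: both arguments reduce to Duhamel's formula for $\partial \mathbf{H}_n(t)/\partial w_{jk}$ combined with the rank-two expression for $\partial \mathbf{G}_n/\partial w_{jk}$ from Lemma~\ref{le1}. The paper invokes Duhamel directly (Lemma~\ref{lea1}) and routes the computation through the chain rule in the entries $g_{ab}$, whereas you derive Duhamel by differentiating the defining ODE and solving by variation of constants, then work in matrix form; these are cosmetic differences. One small remark: your integral $\int_0^t [\mathbf{H}_n(t-s)\bt_{2n}^{1/2}]_{dj}[\bt_{1n}\mathbf{W}_n'\bt_{2n}^{1/2}\mathbf{H}_n(s)]_{kl}\,ds$ matches the paper's convolution $f*g(t)=\int_0^t f(s)g(t-s)\,ds$ only after the change of variable $s\mapsto t-s$, so ``by definition'' is slightly loose, but the identification is of course valid.
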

\begin{proof}
Applying Lemma \ref{le1} and Lemma \ref{lea1}, we get
\begin{align*}
\frac{\partial {\bf H}_n(t)}{\partial w_{jk}}=&\sum_{a,b=1}^N\frac{\partial {\bf H}_n(t)}{\partial g_{ab}}\frac{\partial g_{ab}}{\partial w_{jk}}
=\frac i N\sum_{a,b=1}^N\int_0^te^{is{\bf G}_n}{\bf e}_j{\bf e}_k'e^{(1-s){\bf G}_n}ds\\
&\times\left\{\left[\bt_{2n}^{1/2}\right]_{aj}\left[\bt_{1n}{\bf W}_{n}'\bt_{2n}^{1/2}\right]_{kb}
+\left[\bt_{2n}^{1/2}{\bf W}_{n}\bt_{1n}\right]_{ak}\left[\bt_{2n}^{1/2}\right]_{jb}\right\}.
\end{align*}
Hence, one has
\begin{align*}
\frac{\partial h_{dl}}{\partial w_{jk}}
=&\frac i N\sum_{a,b=1}^Nh_{da}*h_{bl}(t)
\left\{\left[\bt_{2n}^{1/2}\right]_{aj}\left[\bt_{1n}{\bf W}_{n}'\bt_{2n}^{1/2}\right]_{kb}
+\left[\bt_{2n}^{1/2}{\bf W}_{n}\bt_{1n}\right]_{ak}\left[\bt_{2n}^{1/2}\right]_{jb}\right\}\\
=&\frac i N\left[{\bf H}_n\bt_{2n}^{1/2}\right]_{dj}*\left[\bt_{1n}{\bf W}_{n}'\bt_{2n}^{1/2}{\bf H}_n\right]_{kl}(t)+\frac i N\left[{\bf H}_n\bt_{2n}^{1/2}{\bf W}_{n}\bt_{1n}\right]_{dk}*\left[\bt_{2n}^{1/2}{\bf H}_n\right]_{jl}(t).
\end{align*}
\end{proof}

\begin{lemma}\label{le3}
Let $1\le j\le N,1\le k\le n$. Recall the definitions of $S(\theta)$ in (\ref{eq32}) and $\widetilde f({\bf G}_n)$ in (\ref{eq30}). Then for any $1\le d,l \le N$
\begin{align*}
\frac{\partial S(\theta)}{\partial w_{jk}}=\frac {2} N\left[\bt_{2n}^{1/2}\widetilde f({\bf G}_n)\bt_{2n}^{1/2}{\bf W}_{n}\bt_{1n}\right]_{jk}.
\end{align*}

\end{lemma}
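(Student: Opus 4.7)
The plan is to pass to the Fourier inversion formula for the scalar function $f$. Writing $f(\lambda)=\int_{-\infty}^{\infty}\widehat f(u)e^{iu\lambda}du$ and lifting to matrices gives
$$S(\theta)=\rtr f({\bf G}_n)=\int_{-\infty}^{\infty}\widehat f(u)\,\rtr{\bf H}_n(u)\,du.$$
Differentiating under the integral (justified by the rapid decay of $\widehat f$, consistent with the use of $\widetilde f({\bf G}_n)$ in (\ref{eq30})) reduces the task to computing $\partial_{w_{jk}}\rtr{\bf H}_n(u)$, which is exactly the diagonal sum (setting $l=d$) of the expression supplied by Lemma \ref{le2}.

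With $l=d$ in Lemma \ref{le2} and summing over $d$, each of the two convolution terms collapses into a single matrix entry. For the first,
$$\sum_{d=1}^N[{\bf H}_n\bt_{2n}^{1/2}]_{dj}*[\bt_{1n}{\bf W}_n'\bt_{2n}^{1/2}{\bf H}_n]_{kd}(u)=\int_0^u\bigl[\bt_{1n}{\bf W}_n'\bt_{2n}^{1/2}{\bf H}_n(u-s){\bf H}_n(s)\bt_{2n}^{1/2}\bigr]_{kj}ds,$$
and the semigroup property ${\bf H}_n(s){\bf H}_n(u-s)={\bf H}_n(u)$ turns the $s$-integral into a factor of $u$. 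Since ${\bf G}_n$ is real symmetric, ${\bf H}_n(u)$ is symmetric as well, hence
$$[\bt_{1n}{\bf W}_n'\bt_{2n}^{1/2}{\bf H}_n(u)\bt_{2n}^{1/2}]_{kj}=[\bt_{2n}^{1/2}{\bf H}_n(u)\bt_{2n}^{1/2}{\bf W}_n\bt_{1n}]_{jk}.$$
The second convolution term in Lemma \ref{le2} reduces by the same manipulation to the identical quantity, so the two contributions coincide and combine into
$$\frac{\partial\rtr{\bf H}_n(u)}{\partial w_{jk}}=\frac{2iu}{N}\bigl[\bt_{2n}^{1/2}{\bf H}_n(u)\bt_{2n}^{1/2}{\bf W}_n\bt_{1n}\bigr]_{jk}.$$

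Substituting back into the Fourier representation and absorbing the factor $iu$ into the definition $\widetilde f({\bf G}_n)=i\int u\,\widehat f(u)\,{\bf H}_n(u)\,du$ from (\ref{eq30}) yields the stated identity. I do not anticipate any genuine obstacle here: given Lemma \ref{le2}, the entire argument is a short bookkeeping exercise. The only point requiring a moment's attention is the symmetry of ${\bf H}_n(u)$, which is what makes the two convolution terms merge into the factor of two on the right-hand side of the claim.
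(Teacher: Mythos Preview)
Your proposal is correct and follows essentially the same route as the paper's own proof: Fourier inversion for $S(\theta)$, application of Lemma~\ref{le2} with $l=d$ summed over $d$, collapse of the convolution via the semigroup identity ${\bf H}_n(s){\bf H}_n(u-s)={\bf H}_n(u)$ to produce the factor $u$, and identification of the two terms via the symmetry of ${\bf H}_n(u)$. The paper presents these steps more tersely (simply writing the chain of equalities without naming the semigroup property or the symmetry), but the argument is the same.
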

\begin{proof}
By the inverse Fourier transform, we obtain
\begin{align*}
\frac{\partial S}{\partial w_{jk}}=\int_{-\infty}^{\infty}\widehat f(u)\rtr\frac{\partial {\bf H}_n(u)}{\partial w_{jk}}du.
\end{align*}
It follows from Lemma \ref{le2} that
\begin{align*}
\frac{\partial S}{\partial w_{jk}}=&\frac {2i} N\int_{-\infty}^{\infty}\widehat f(u)\sum_{d=1}^N\left[{\bf H}_n\bt_{2n}^{1/2}\right]_{dj}*\left[\bt_{1n}{\bf W}_{n}'\bt_{2n}^{1/2}{\bf H}_n\right]_{kd}(u)du\\
=&\frac {2i} N\int_{-\infty}^{\infty}u\widehat f(u)\left[\bt_{2n}^{1/2}{\bf H}_n(u)\bt_{2n}^{1/2}{\bf W}_{n}\bt_{1n}\right]_{jk}du\\
=&\frac {2} N\left[\bt_{2n}^{1/2}\widetilde f({\bf G}_n)\bt_{2n}^{1/2}{\bf W}_{n}\bt_{1n}\right]_{jk}.
\end{align*}
\end{proof}

\begin{lemma}\label{le4}
Suppose ${\bf A},{\bf B},{\bf C},{\bf D}$ are $p\times n$ random matrices and their moments of the spectral norms are bounded. Then we get
\begin{align}
&\left|\sum_{j,k}\re{\bf A}_{jj}{\bf B}_{jk}{\bf C}_{kk}\right|\le C n^{5/4}\\
&\left|\sum_{j,k}\re{\bf A}_{jk}{\bf B}_{jk}{\bf C}_{jk}\right|\le C n^{5/4}.
\end{align}
\end{lemma}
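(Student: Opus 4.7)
The plan is to reduce both inequalities to elementary operator/Frobenius norm bounds on random matrices and then apply Hölder's inequality to take expectations, using the assumption that the spectral norms have bounded moments. I expect the stated exponent $n^{5/4}$ is comfortably loose; the natural bounds from Cauchy--Schwarz actually give $O(n)$.

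For the first inequality I would rewrite the double sum as a quadratic form. Setting ${\bf a}=(A_{jj})_{j}$ and ${\bf c}=(C_{kk})_{k}$, one has $\sum_{j,k}A_{jj}B_{jk}C_{kk}={\bf a}'{\bf B}{\bf c}$, so $|{\bf a}'{\bf B}{\bf c}|\le\|{\bf B}\|\,\|{\bf a}\|\,\|{\bf c}\|$. Since $|A_{jj}|=|{\bf e}_j'{\bf A}{\bf e}_j|\le\|{\bf A}\|$ (and likewise for $C_{kk}$), one gets $\|{\bf a}\|^2\le N\|{\bf A}\|^2$ and $\|{\bf c}\|^2\le n\|{\bf C}\|^2$, hence
\[
\Big|\sum_{j,k}{\bf A}_{jj}{\bf B}_{jk}{\bf C}_{kk}\Big|\le\sqrt{Nn}\,\|{\bf A}\|\,\|{\bf B}\|\,\|{\bf C}\|.
\]
Taking expectation and applying the triple Hölder inequality $\re[\|{\bf A}\|\|{\bf B}\|\|{\bf C}\|]\le(\re\|{\bf A}\|^3)^{1/3}(\re\|{\bf B}\|^3)^{1/3}(\re\|{\bf C}\|^3)^{1/3}$, which is finite by assumption, gives the desired bound (indeed with the stronger rate $Cn$, which is $\le Cn^{5/4}$).

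For the second inequality I would apply the triangle inequality and then Cauchy--Schwarz on the double sum. Bounding $|A_{jk}|\le\|{\bf A}\|$ pulls $\|{\bf A}\|$ outside, leaving $\sum_{j,k}|B_{jk}||C_{jk}|\le\|{\bf B}\|_F\,\|{\bf C}\|_F$ by Cauchy--Schwarz in the $(j,k)$ index. The key observation is that $\|{\bf M}\|_F^2\le\mathrm{rank}({\bf M})\|{\bf M}\|^2\le\min(N,n)\|{\bf M}\|^2\le n\|{\bf M}\|^2$, so $\|{\bf B}\|_F\|{\bf C}\|_F\le n\|{\bf B}\|\|{\bf C}\|$. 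Thus
\[
\Big|\sum_{j,k}{\bf A}_{jk}{\bf B}_{jk}{\bf C}_{jk}\Big|\le n\|{\bf A}\|\,\|{\bf B}\|\,\|{\bf C}\|,
\]
and Hölder again yields the bound (again the sharper rate $Cn$).

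There is no serious obstacle here: both steps are routine once one recognizes the quadratic-form structure in (a) and the Frobenius-versus-operator-norm comparison in (b). The only minor points to keep track of are (i) interpreting the matrix shapes correctly so that $A_{jj}$, $B_{jk}$, $C_{kk}$ are consistent with the index ranges $1\le j\le N$, $1\le k\le n$, and (ii) ensuring the Hölder exponents for the spectral-norm moments are compatible with the bounded-moment hypothesis, which is automatic since the assumption is that moments of every order are bounded.
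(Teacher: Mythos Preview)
Your argument is correct and in fact sharper than what is needed: both bounds come out as $O(n)$ rather than $O(n^{5/4})$. The paper takes a different and somewhat more elaborate route. For the first inequality the paper applies Cauchy--Schwarz twice: first in $j$ to separate $(\sum_j\re|A_{jj}|^2)^{1/2}$ from $(\sum_j\re|\sum_k B_{jk}C_{kk}|^2)^{1/2}$, then rewrites the second factor as $\sum_{k_1,k_2}(B'B)_{k_1k_2}C_{k_1k_1}\bar C_{k_2k_2}$ and applies Cauchy--Schwarz again in $(k_1,k_2)$; this combination produces the looser exponent $5/4$. For the second inequality the paper uses a weighted Cauchy--Schwarz iteration with weight $|A_{jk}||B_{jk}|$, again ending at $n^{5/4}$. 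Your observation that the first sum is literally the bilinear form ${\bf a}'{\bf B}{\bf c}$ with $\|{\bf a}\|\le\sqrt{N}\|{\bf A}\|$, $\|{\bf c}\|\le\sqrt{n}\|{\bf C}\|$, and that the second is controlled by $\|{\bf A}\|\,\|{\bf B}\|_F\|{\bf C}\|_F\le n\|{\bf A}\|\|{\bf B}\|\|{\bf C}\|$ via the rank bound on the Frobenius norm, is more transparent and loses nothing. Either bound suffices for the application in Section~4, where only $o(N)$ after division by $N$ is required.
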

\begin{proof}
Using the Cauchy-Schwarz inequality, we have
\begin{align*}
\left|\sum_{j,k}\re{\bf A}_{jj}{\bf B}_{jk}{\bf C}_{kk}\right|\le &
\left(\sum_{j}\re\left|{\bf A}_{jj}\right|^2\right)^{1/2}\left(\sum_{j}\re\left|\sum_{k}{\bf B}_{jk}{\bf C}_{kk}\right|^2\right)^{1/2}\\
\le&C\sqrt n\left(\sum_{k_1,k_2}\re\left({\bf B}'{\bf B}\right)_{k_1k_2}{\bf C}_{k_1k_1}{\bf C}'_{k_2k_2}\right)^{1/2}\\
\le&C\sqrt n\left(\sum_{k_1,k_2}\re\left({\bf B}'{\bf B}\right)^2_{k_1k_2}\right)^{1/4}\left(\sum_{k_1,k_2}\re\left|{\bf C}_{k_1k_1}\right|^2\left|{\bf C}'_{k_2k_2}\right|^2\right)^{1/4}\\
\le&C n^{5/4}
\end{align*}
and
\begin{align*}
\left|\sum_{j,k}\re{\bf A}_{jk}{\bf B}_{jk}{\bf C}_{jk}\right|\le &\left(\sum_{j,k}\re\left|{\bf A}_{jk}\right|\left|{\bf B}_{jk}\right|\right)^{1/2}\left(\sum_{j,k}\re\left|{\bf A}_{jk}\right|\left|{\bf B}_{jk}\right|\left|{\bf C}_{jk}\right|^2\right)^{1/2}\\
\le &\left(\sum_{j,k}\re\left|{\bf A}_{jk}\right|\left|{\bf B}_{jk}\right|\right)^{3/4}\left(\sum_{j,k}\re\left|{\bf A}_{jk}\right|\left|{\bf B}_{jk}\right|\left|{\bf C}_{jk}\right|^4\right)^{1/4}\\
\le &C\sqrt n\left(\sum_{j,k}\re\left|{\bf A}_{jk}\right|^2\right)^{3/8}\left(\sum_{j,k}\re\left|{\bf B}_{jk}\right|^2\right)^{3/8}\\
\le&C n^{5/4}.
\end{align*}
\end{proof}

\section{}
In this section, we list several technical facts that will be often used in the paper.
\begin{lemma}[Lemma B.26 in \cite{bai2004clt}]\label{lep1}
Let ${\bf A}=(a_{jk})$ be an $n\times n$ nonrandom matrix and $\bx=(x_1,\cdots,x_n)'$ be a random vector of independent entries. Assume that $\re x_j=0$, $\re|x_j|^2=1$ and $\re|x_j|^l\le \nu_l$. Then for $p\ge1$,
\begin{align*}
&\re\left|\bx'{\bf A}\bx-\rtr{\bf A}\right|^p
\le C_p\left[\left(\nu_4\rtr{\bf A}{\bf A}'\right)^{p/2}+\nu_{2p}\rtr\left({\bf A}{\bf A}'\right)^{p/2}\right]
\end{align*}
where $C_p$ is a constant depending on $p$ only.
\end{lemma}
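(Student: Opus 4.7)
The statement to prove is a moment inequality for quadratic forms $\bx'{\bf A}\bx - \rtr{\bf A}$ in independent random variables; I will not use any CLT machinery. The plan is to split the centered quadratic form into its diagonal part $D := \sum_{j} a_{jj}(x_j^2-1)$ and its off-diagonal part $O := \sum_{j\neq k} a_{jk} x_j x_k$, bound each in $L^p$ separately, and combine them via the $c_p$-inequality $\re|D+O|^p \le 2^{p-1}(\re|D|^p + \re|O|^p)$.

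For the diagonal piece $D$, the summands $a_{jj}(x_j^2-1)$ are independent, mean zero, and have $L^2$ norm bounded by $|a_{jj}|\sqrt{\nu_4}$ and $L^p$ norm bounded by $|a_{jj}|(\nu_{2p})^{1/p}$ (up to constants). Rosenthal's inequality for sums of independent centered variables then yields
\begin{align*}
\re|D|^p \le C_p\Bigl[\nu_4^{p/2}\Bigl(\sum_j a_{jj}^2\Bigr)^{p/2} + \nu_{2p}\sum_j |a_{jj}|^p\Bigr].
\end{align*}
Since $a_{jj}^2 \le ({\bf A}{\bf A}')_{jj}$ we have $\sum_j a_{jj}^2 \le \rtr {\bf A}{\bf A}'$, and since the map $t \mapsto t^{p/2}$ is convex (for $p\ge 2$) or subadditive (for $1\le p <2$) one controls $\sum_j |a_{jj}|^p \le \sum_j ({\bf A}{\bf A}')_{jj}^{p/2} \le \rtr({\bf A}{\bf A}')^{p/2}$, giving the desired form for $D$.

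For the off-diagonal piece $O$, I would set $\mathcal{F}_k = \sigma(x_1,\dots,x_k)$ and write $O = \sum_{k=1}^n Y_k$ with $Y_k = x_k \sum_{j<k}(a_{jk}+a_{kj})x_j$, which is a martingale difference sequence since $\re(Y_k\mid\mathcal{F}_{k-1})=0$. Burkholder's inequality gives
\begin{align*}
\re|O|^p \le C_p\,\re\Bigl(\sum_k \re(|Y_k|^2\mid\mathcal{F}_{k-1})\Bigr)^{p/2} + C_p \sum_k \re|Y_k|^p.
\end{align*}
The conditional second moment is $\sum_{k} \bigl(\sum_{j<k}(a_{jk}+a_{kj})x_j\bigr)^2$, whose expectation is bounded by $C\,\rtr {\bf A}{\bf A}'$; applying Lemma \ref{lep1} itself (or, to avoid circularity, applying the diagonal/off-diagonal decomposition inductively in $p$) handles the $p/2$-th moment of this quadratic form in the $x_j$'s. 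The individual jump term $\re|Y_k|^p$ is controlled by another Rosenthal application to the inner sum $\sum_{j<k}(a_{jk}+a_{kj})x_j$, producing $\nu_4^{p/2}(\sum_{j<k}(a_{jk}+a_{kj})^2)^{p/2} + \nu_{2p}\sum_{j<k}|a_{jk}+a_{kj}|^p$, which after summing in $k$ and using $\sum_{j,k}|a_{jk}|^2 = \rtr{\bf A}{\bf A}'$ and $\sum_{j,k}|a_{jk}|^p \le \rtr({\bf A}{\bf A}')^{p/2}$ yields the desired bound.

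The main obstacle is the self-referential nature of the Burkholder bound: the quadratic variation $\sum_k \re(|Y_k|^2\mid\mathcal{F}_{k-1})$ is itself a quadratic form in $(x_j)$, so raising it to power $p/2$ reintroduces the problem at smaller $p$. I would handle this by induction on $\lceil p\rceil$: the base case $p\in[1,2]$ follows from Jensen/$L^2$-orthogonality (the variance of $O$ is exactly $2\sum_{j\neq k}a_{jk}^2 \le 2\rtr{\bf A}{\bf A}'$), and the inductive step supplies the $(\nu_4\rtr{\bf A}{\bf A}')^{p/2}$ term while the direct Rosenthal jump bound supplies the $\nu_{2p}\rtr({\bf A}{\bf A}')^{p/2}$ term. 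Summing the diagonal and off-diagonal estimates, absorbing constants into $C_p$, gives the claimed inequality.
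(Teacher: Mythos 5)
First, note that the paper does not actually prove this statement: Lemma \ref{lep1} is quoted as Lemma B.26 of \cite{bai2004clt} and used as a black box, so there is no internal proof to compare against. Your strategy --- diagonal/off-diagonal split, Rosenthal for the diagonal part, a martingale decomposition plus the Burkholder--Rosenthal inequality for the off-diagonal part, and an induction that halves $p$ --- is precisely the standard proof of the cited lemma, and your treatment of the diagonal part and of the jump term $\sum_k\re|Y_k|^p$ is essentially right (the inequalities $\sum_j|a_{jj}|^p\le\rtr({\bf A}{\bf A}')^{p/2}$ and $\sum_{j,k}|a_{jk}|^p\le\rtr({\bf A}{\bf A}')^{p/2}$ for $p\ge2$ are Schur majorization and the $\ell_p$-versus-Schatten-$p$ comparison, which deserve explicit mention, as does the reduction $\nu_p\le\nu_{2p}$ after taking the $\nu_l$ to be the tightest bounds).

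There is, however, a genuine gap at the crux of the induction. Applying the lemma at exponent $p/2$ to the quadratic variation $\bx'{\bf B}\bx=\sum_k\bigl(\sum_{j<k}(a_{jk}+a_{kj})x_j\bigr)^2$, where ${\bf B}={\bf C}'{\bf C}$ and ${\bf C}$ is the strictly lower triangular part of ${\bf A}+{\bf A}'$, returns \emph{two} terms,
\begin{align*}
C_{p/2}\Bigl[\bigl(\nu_4\rtr {\bf B}^2\bigr)^{p/4}+\nu_{p}\,\rtr {\bf B}^{p/2}\Bigr],
\end{align*}
and your accounting tracks only the first. The second cannot be disposed of by the crude bound $\rtr{\bf B}^{p/2}\le(\rtr{\bf B})^{p/2}\le C(\rtr{\bf A}{\bf A}')^{p/2}$: that yields $\nu_p(\rtr{\bf A}{\bf A}')^{p/2}$, which is \emph{not} dominated by $C_p[(\nu_4\rtr{\bf A}{\bf A}')^{p/2}+\nu_{2p}\rtr({\bf A}{\bf A}')^{p/2}]$ --- take ${\bf A}={\bf I}_n$ and a spiked two-point distribution with $\nu_4$ bounded but $\nu_p\to\infty$; the offending term grows like $\nu_p n^{p/2}$ while the right-hand side is $O(n^{p/2}+\nu_{2p}n)$. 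To close the induction you must show $\rtr{\bf B}^{p/2}=\|{\bf C}\|_{S_p}^p\le C_p\rtr({\bf A}{\bf A}')^{p/2}$, i.e., invoke the boundedness of triangular truncation on the Schatten class $S_p$ for $2\le p<\infty$ (Gohberg--Krein/Macaev), or restructure the argument, e.g.\ by decoupling the off-diagonal form into $\sum_{j,k}a_{jk}x_j\tilde x_k$ with an independent copy $\tilde\bx$, after which the matrix appearing in the inductive step is ${\bf A}'{\bf A}$ itself and $\rtr({\bf A}'{\bf A})^{p/2}=\rtr({\bf A}{\bf A}')^{p/2}$ trivially. A minor further point: the induction should run on $\lceil\log_2 p\rceil$ (the Burkholder step sends $p$ to $p/2$), not on $\lceil p\rceil$.
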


\begin{lemma}[Lemma 2.4 in \cite{bai2004clt}]\label{lep2}
Suppose for each $n$ $Y_{n1},Y_{n2},\cdots,Y_{nr_n}$ is a real martingale difference sequence with respect to the increasing $\sigma$-field $\{\mathcal{F}_{nj}\}$ having second moments. If as $n\to\infty$,
\begin{align*}
&(i)\quad\quad\quad\quad\qquad\qquad\sum_{j=1}^{r_n}\re(Y_{nj}^2|\mathcal{F}_{n,j-1})\xrightarrow{i.p.}\sigma^2,\qquad\qquad\qquad\qquad\qquad\qquad\\
&{\rm where}\ \sigma^2\ {\rm is\ a\ positive\ constant}, \ {\rm and\ for\ each}\ \varepsilon\ge0,\\
&(ii)\quad\quad\quad\quad\qquad\qquad\sum_{j=1}^{r_n}\re(Y_{nj}^2I(|Y_{nj}\ge\varepsilon|))\to0,\qquad\qquad\qquad\qquad\qquad\qquad\\
&{\rm then}\\
&\quad\quad\quad\quad\qquad\qquad\qquad\sum_{j=1}^{r_n}Y_{nj}\xrightarrow{D}N(0,\sigma^2).\qquad\qquad\qquad\qquad
\end{align*}
\end{lemma}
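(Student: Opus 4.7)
The plan is to establish convergence of characteristic functions: writing $S_n=\sum_{j=1}^{r_n}Y_{nj}$, it suffices to show that $\re\exp(itS_n)\to\exp(-t^2\sigma^2/2)$ for every real $t$. The martingale structure enables an iterative conditioning argument through the filtration $\{\mathcal{F}_{nj}\}$, reducing the problem to controlling the conditional characteristic function of each increment $Y_{nj}$.

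First I would perform a truncation step. For each $\varepsilon>0$ set $Y'_{nj}=Y_{nj}I(|Y_{nj}|\le\varepsilon)-\re(Y_{nj}I(|Y_{nj}|\le\varepsilon)\mid\mathcal{F}_{n,j-1})$, which is again a martingale difference sequence with $|Y'_{nj}|\le 2\varepsilon$ uniformly. Condition (ii) implies $\re|S_n-\sum_{j}Y'_{nj}|^2\to 0$, so one may replace $Y_{nj}$ by $Y'_{nj}$ at the cost of a vanishing error, and a short computation combining (i) with (ii) shows that the truncated conditional variances $V^2_{nj}:=\re(Y'^2_{nj}\mid\mathcal{F}_{n,j-1})$ also satisfy $\sum_{j}V^2_{nj}\xrightarrow{i.p.}\sigma^2$.

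Next, the Taylor bound $|e^{iy}-1-iy+y^2/2|\le|y|^3/6$ applied to each $Y'_{nj}$, together with $\re(Y'_{nj}\mid\mathcal{F}_{n,j-1})=0$, yields
\[
\re(e^{itY'_{nj}}\mid\mathcal{F}_{n,j-1})=1-\tfrac{t^2}{2}V^2_{nj}+R_{nj},\qquad |R_{nj}|\le\tfrac{|t|^3\varepsilon}{3}V^2_{nj}.
\]
Iteratively conditioning on $\mathcal{F}_{n,r_n-1},\mathcal{F}_{n,r_n-2},\ldots$ then gives
\[
\re\exp\!\Bigl(it\sum_{j}Y'_{nj}\Bigr)=\re\prod_{j=1}^{r_n}\Bigl(1-\tfrac{t^2}{2}V^2_{nj}+R_{nj}\Bigr).
\]
Comparing this product with $\exp\bigl(-\tfrac{t^2}{2}\sum_{j}V^2_{nj}\bigr)$ via the elementary inequality $\bigl|\prod_{j}(1+a_j)-\exp(\sum_{j}a_j)\bigr|\le\exp(\sum_{j}|a_j|)\sum_{j}|a_j|^2$, and using that $\sum_{j}V^2_{nj}\to\sigma^2$ in probability while $\max_{j}V^2_{nj}\le 4\varepsilon^2$, one deduces $\re\exp(itS_n)\to\exp(-t^2\sigma^2/2)$ by first letting $n\to\infty$ for fixed $\varepsilon$ and then $\varepsilon\downarrow 0$.

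The main technical obstacle is the bookkeeping needed to keep both the remainder terms $R_{nj}$ and the deviation of the product from the exponential under control when $\sum_{j}V^2_{nj}$ converges only in probability (rather than in $L^1$ or almost surely). This requires a uniform bound of the form $|1-\tfrac{t^2}{2}V^2_{nj}+R_{nj}|\le\exp(-\tfrac{t^2}{2}V^2_{nj}+|R_{nj}|)$ that remains valid on sets of arbitrarily high probability, together with dominated convergence to exchange the order of limits in $n$ and $\varepsilon$. Once these uniform bounds are established, the remaining estimates reduce to the standard Lindeberg--Feller scheme and follow routinely.
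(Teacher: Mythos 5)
This lemma is quoted by the paper as a known result (Lemma~2.4 of Bai and Silverstein (2004), i.e.\ the classical martingale CLT going back to Billingsley), and the paper supplies no proof of it; so your attempt can only be measured against the standard proofs in the literature.

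Your truncation step and the final product-versus-exponential comparison are essentially sound, but the pivotal identity
\[
\re\exp\Bigl(it\sum_{j}Y'_{nj}\Bigr)=\re\prod_{j=1}^{r_n}\re\bigl(e^{itY'_{nj}}\mid\mathcal{F}_{n,j-1}\bigr)
\]
obtained by ``iteratively conditioning'' is false in general. Conditioning on $\mathcal{F}_{n,r_n-1}$ does give $\re[e^{itS'_{n,r_n-1}}g_{r_n}]$ with $g_{r_n}=\re(e^{itY'_{nr_n}}\mid\mathcal{F}_{n,r_n-1})$, but $g_{r_n}$ is a random variable that is not $\mathcal{F}_{n,r_n-2}$-measurable, so it cannot be carried outside the next conditional expectation; the iteration stops after one step. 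A two-term counterexample: let $Y_1=\pm1$ with probability $1/2$ and, given $Y_1$, let $Y_2=\pm a(Y_1)$ with conditional probability $1/2$; then $\re e^{it(Y_1+Y_2)}=\tfrac12 e^{it}\cos(ta(1))+\tfrac12 e^{-it}\cos(ta(-1))$ has nonzero imaginary part when $a(1)\neq a(-1)$, whereas $\re[\cos(t)\cos(ta(Y_1))]$ is real. The standard ways around this are precisely the devices you would need to import: either McLeish's trick, which uses the exact identity $\re\prod_j(1+itY_{nj})=1$ (valid because each partial product is $\mathcal{F}_{n,k-1}$-measurable and $\re(1+itY_{nk}\mid\mathcal{F}_{n,k-1})=1$) together with the expansion $e^{ix}=(1+ix)e^{-x^2/2+r(x)}$, or Billingsley's telescoping of $e^{itS_n+t^2\Sigma_{n}^2/2}-1$ over $k$, where $\Sigma_{nk}^2=\sum_{j\le k}V_{nj}^2$ is predictable. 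Both routes additionally require stopping the array once the accumulated conditional variance exceeds, say, $2\sigma^2$, in order to obtain the uniform integrability you allude to at the end; merely invoking dominated convergence does not supply a dominating function, since $\exp(\sum_j|a_j|)$ is controlled only on an event of high probability. So the skeleton of your argument is recognizable, but the central factorization step as written fails, and the proof must be rebuilt around one of the two correct identities.
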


\begin{lemma}[Lemma 2.6 in \cite{silverstein1995empirical}]\label{lep3}
Let $z\in\mathbb{C}^+$ with $v=\Im z$, ${\bf A}$ and ${\bf B}$ $N\times N$ with ${\bf B}$ Hermitian, $\tau\in\mathbb{R}$, and ${\rm q}\in\mathbb{C}^N$. Then
\begin{align*}
\left|\rtr\left[\left((\bb-z\bi)^{-1}-(\bb+\tau{\bf qq}^*-z\bi)^{-1}\right){\bf A}\right]\right|\le\frac{\|{\bf A}\|}v.
\end{align*}
\end{lemma}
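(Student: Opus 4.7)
The plan is to reduce the difference of resolvents to a rank-one expression via the Sherman--Morrison identity and then exploit the fact that the resolvent of a Hermitian matrix satisfies $\Im z\cdot \mathbf R\mathbf R^{*}=\Im\bigl((\mathbf B-z\bi)^{-1}\bigr)$ to control the denominator. Writing $\mathbf R=(\mathbf B-z\bi)^{-1}$, the case $\tau=0$ is trivial, so assume $\tau\ne 0$. Apply the identity (\ref{al2}) of the paper, with $\boldsymbol\Sigma=\mathbf B-z\bi$, $\boldsymbol\alpha=\boldsymbol\beta=\mathbf q$ and $q=\tau$, to obtain
\begin{equation*}
(\mathbf B+\tau\mathbf{qq}^{*}-z\bi)^{-1}=\mathbf R-\frac{\tau\,\mathbf R\mathbf{qq}^{*}\mathbf R}{1+\tau\,\mathbf q^{*}\mathbf R\mathbf q}.
\end{equation*}

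The next step is to take the trace against $\mathbf A$ and use cyclicity of the trace. This converts the difference into a scalar:
\begin{equation*}
\rtr\Bigl[\bigl(\mathbf R-(\mathbf B+\tau\mathbf{qq}^{*}-z\bi)^{-1}\bigr)\mathbf A\Bigr]
=\frac{\tau\,\mathbf q^{*}\mathbf R\mathbf A\mathbf R\mathbf q}{1+\tau\,\mathbf q^{*}\mathbf R\mathbf q}.
\end{equation*}
I would then bound numerator and denominator separately. For the numerator, since $\mathbf B$ is Hermitian $\mathbf R$ is normal, so Cauchy--Schwarz gives
\begin{equation*}
|\mathbf q^{*}\mathbf R\mathbf A\mathbf R\mathbf q|=|(\mathbf R^{*}\mathbf q)^{*}\mathbf A(\mathbf R\mathbf q)|\le\|\mathbf A\|\,\|\mathbf R^{*}\mathbf q\|\,\|\mathbf R\mathbf q\|=\|\mathbf A\|\,\mathbf q^{*}\mathbf R^{*}\mathbf R\mathbf q,
\end{equation*}
where the last equality uses $\|\mathbf R\mathbf q\|=\|\mathbf R^{*}\mathbf q\|$, valid because $\mathbf R\mathbf R^{*}=\mathbf R^{*}\mathbf R$.

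For the denominator, the identity
\begin{equation*}
\mathbf R-\mathbf R^{*}=(\mathbf B-z\bi)^{-1}-(\mathbf B-\bar z\bi)^{-1}=(z-\bar z)\mathbf R\mathbf R^{*}=2iv\,\mathbf R\mathbf R^{*}
\end{equation*}
yields $\Im(\mathbf q^{*}\mathbf R\mathbf q)=v\,\mathbf q^{*}\mathbf R^{*}\mathbf R\mathbf q$, so that $\Im(\tau\mathbf q^{*}\mathbf R\mathbf q)=\tau v\,\mathbf q^{*}\mathbf R^{*}\mathbf R\mathbf q$ and consequently
\begin{equation*}
|1+\tau\,\mathbf q^{*}\mathbf R\mathbf q|\ge |\Im(\tau\mathbf q^{*}\mathbf R\mathbf q)|=|\tau|\,v\,\mathbf q^{*}\mathbf R^{*}\mathbf R\mathbf q.
\end{equation*}
Combining the two bounds,
\begin{equation*}
\left|\frac{\tau\,\mathbf q^{*}\mathbf R\mathbf A\mathbf R\mathbf q}{1+\tau\,\mathbf q^{*}\mathbf R\mathbf q}\right|\le\frac{|\tau|\,\|\mathbf A\|\,\mathbf q^{*}\mathbf R^{*}\mathbf R\mathbf q}{|\tau|\,v\,\mathbf q^{*}\mathbf R^{*}\mathbf R\mathbf q}=\frac{\|\mathbf A\|}{v},
\end{equation*}
provided $\mathbf q\ne\mathbf 0$; if $\mathbf q=\mathbf 0$ the left side is zero. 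This yields the desired inequality.

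The proof is essentially mechanical once the Sherman--Morrison step is applied; the only subtlety is ensuring that the factor $\mathbf q^{*}\mathbf R^{*}\mathbf R\mathbf q$ appearing in the numerator bound is \emph{exactly} the same quantity that appears in the lower bound for the denominator, so that it cancels and no $\|\mathbf q\|$ factor survives. This cancellation hinges on (i) normality of $\mathbf R$ (used to equate $\|\mathbf R\mathbf q\|$ and $\|\mathbf R^{*}\mathbf q\|$) and (ii) the identity $\Im\mathbf R=v\mathbf R\mathbf R^{*}$ that comes from $\mathbf B$ being Hermitian. I do not expect any genuine obstacle beyond tracking these pieces carefully.
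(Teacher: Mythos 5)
Your proof is correct; the paper itself gives no proof of this lemma, quoting it directly as Lemma 2.6 of Silverstein and Bai (1995), and your argument (Sherman--Morrison, Cauchy--Schwarz on the numerator, and the lower bound $|1+\tau\mathbf q^{*}\mathbf R\mathbf q|\ge|\tau|v\,\mathbf q^{*}\mathbf R^{*}\mathbf R\mathbf q$ from $\Im\mathbf R=v\mathbf R\mathbf R^{*}$) is exactly the standard one in that reference. The cancellation of $\mathbf q^{*}\mathbf R^{*}\mathbf R\mathbf q$ that you flag is indeed the crux, and you have handled the degenerate cases $\tau=0$ and $\mathbf q=\mathbf 0$ properly.
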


\begin{lemma}[Abel lemma]\label{lep4}
Suppose $\{f_k\}$ and $\{r_k\}$ are two sequences. Then, we have
\begin{align*}
\sum_{k=1}^nf_k(r_{k+1}-r_k)=f_{n+1}r_{n+1}-f_1r_1-\sum_{k=1}^nr_{k+1}(f_{k+1}-f_k).
\end{align*}
\end{lemma}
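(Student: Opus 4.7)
The plan is to prove this as a direct telescoping identity. I would start with the observation that the sum $\sum_{k=1}^{n}\left(f_{k+1}r_{k+1} - f_k r_k\right)$ telescopes to $f_{n+1}r_{n+1} - f_1 r_1$, so the right-hand side of the claim can be recognized as nothing more than a rearrangement of this telescoping identity.

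Concretely, I would add and subtract $f_k r_{k+1}$ inside each summand to split the difference of products as
\begin{align*}
f_{k+1}r_{k+1} - f_k r_k = (f_{k+1}-f_k)r_{k+1} + f_k(r_{k+1}-r_k).
\end{align*}
Summing this decomposition from $k=1$ to $n$ and equating with the telescoped value $f_{n+1}r_{n+1} - f_1 r_1$ gives
\begin{align*}
f_{n+1}r_{n+1} - f_1 r_1 = \sum_{k=1}^{n} r_{k+1}(f_{k+1}-f_k) + \sum_{k=1}^{n} f_k(r_{k+1}-r_k),
\end{align*}
and isolating the second sum on the right yields the stated identity.

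There is no real obstacle here; the only thing to be careful about is the choice of which term to add and subtract (adding and subtracting $f_{k+1}r_k$ instead would give the symmetric summation-by-parts formula with the roles of $f$ and $r$ swapped, and one has to match the convention used in the application, where the author sums against $r_{k+1}$ rather than $r_k$). I would also note that the proof requires no hypotheses on the sequences beyond being well-defined, so the statement holds for arbitrary complex (or indeed arbitrary abelian group–valued) sequences, which is convenient since in the main text it is applied to the sequence $r_k = \sum_{j=1}^{k} r_{nj}(z_1,z_2)/k$ with $f_k$ depending on the contour point through $f_n(z_1,z_2)$.
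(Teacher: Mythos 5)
Your proof is correct: the decomposition $f_{k+1}r_{k+1}-f_kr_k=(f_{k+1}-f_k)r_{k+1}+f_k(r_{k+1}-r_k)$ telescopes exactly as you claim, and this is the standard argument for summation by parts. The paper states Lemma \ref{lep4} without proof among its technical facts, so there is nothing to compare against; your verification (and your remark that no hypotheses on the sequences are needed) is exactly what one would supply.
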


\begin{lemma}[inequality (4.8) in \cite{bai2004clt}]\label{lep6}
Let ${\bf M}$ be $N\times N$ nonrandom matrix, we find for $j\in\left\{1,2,\cdots,n\right\}$
\begin{align*}
\re\left|\rtr\bd_j^{-1}{\bf M}-\re\rtr\bd_j^{-1}{\bf M}\right|^2\le C\|{\bf M}\|^2
\end{align*}
\end{lemma}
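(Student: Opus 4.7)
The plan is a martingale-plus-Burkholder argument, with the twist that we must extract a $1/N$ gain in the $L^2$ norm of each martingale increment, so that the sum over $n-1$ increments telescopes to $O(\|{\bf M}\|^2)$ rather than the naive $O(n\|{\bf M}\|^2)$.

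First I would let $\mathcal{F}_k$ denote the $\sigma$-field generated by $\bbx_m$ with $m\le k$ and $m\ne j$, so that $\rtr\bd_j^{-1}{\bf M}$, as a function of $\{\bbx_m : m\ne j\}$, admits the martingale-difference decomposition
\[
\rtr\bd_j^{-1}{\bf M}-\re\rtr\bd_j^{-1}{\bf M}=\sum_{k\ne j}({\rm E}_k-{\rm E}_{k-1})\rtr\bd_j^{-1}{\bf M}.
\]
The key observation is that the doubly-reduced resolvent $\bd_{jk}^{-1}$ (introduced in the displayed formula after (\ref{eq13})) is independent of $\bbx_k$, so $({\rm E}_k-{\rm E}_{k-1})\rtr\bd_{jk}^{-1}{\bf M}=0$. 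Hence each summand may be replaced by $({\rm E}_k-{\rm E}_{k-1})[\rtr\bd_j^{-1}{\bf M}-\rtr\bd_{jk}^{-1}{\bf M}]$, and the Sherman--Morrison formula (\ref{al2}) applied to the rank-one update $\bd_j=\bd_{jk}+N^{-1}s_k\by_k\by_k'$ gives
\[
\rtr\bd_j^{-1}{\bf M}-\rtr\bd_{jk}^{-1}{\bf M}
=-\frac{s_k/N\cdot\by_k'\bd_{jk}^{-1}{\bf M}\bd_{jk}^{-1}\by_k}{1+(s_k/N)\by_k'\bd_{jk}^{-1}\by_k}.
\]

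Next I would apply Burkholder's inequality for martingale differences to obtain
\[
\re|\rtr\bd_j^{-1}{\bf M}-\re\rtr\bd_j^{-1}{\bf M}|^2
\le C\sum_{k\ne j}\re\bigl|({\rm E}_k-{\rm E}_{k-1})\xi_k\bigr|^2,
\]
with $\xi_k$ denoting the above rational expression. To estimate each summand I would split the numerator as $\rtr(\bt_{2n}\bd_{jk}^{-1}{\bf M}\bd_{jk}^{-1})+\rho_{jk}^{*}$. The centered quadratic-form residual $\rho_{jk}^{*}$ has $L^2$ norm of order $\sqrt{N}\|{\bf M}\|$ by Lemma \ref{lep1} (applied with the matrix $\bd_{jk}^{-1}{\bf M}\bd_{jk}^{-1}$, whose Hilbert--Schmidt norm squared is $\le N\|{\bf M}\|^2\|\bd_{jk}^{-1}\|^4$ and whose operator norm is bounded on $\mathcal{C}_n$ by the contour choice (\ref{eq18})); after the $1/N$ prefactor this contributes $O(\|{\bf M}\|/\sqrt{N})$ in $L^2$. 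The deterministic trace part, together with the $\by_k$-dependent denominator, contributes a further piece which, after conditioning and using the boundedness of the $\beta$-type factor from Lemma \ref{add}, also collapses to $O(\|{\bf M}\|/\sqrt{N})$ in $L^2$ once we subtract an $\mathcal{F}_{k-1}$-measurable centering inside $({\rm E}_k-{\rm E}_{k-1})$. Consequently $\re|({\rm E}_k-{\rm E}_{k-1})\xi_k|^2=O(\|{\bf M}\|^2/N)$, and summing over $n-1$ values of $k$ yields the desired bound. The contributions on the exceptional event $\{\lambda_{\min}^{\bb_n}<\eta_l\text{ or }\lambda_{\max}^{\bb_n}>\eta_r\}$ are absorbed using (\ref{max})--(\ref{min}) together with the crude deterministic bound $|\rtr\bd_j^{-1}{\bf M}-\rtr\bd_{jk}^{-1}{\bf M}|\le\|{\bf M}\|/v_0$ from Lemma \ref{lep3}.

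The main obstacle is establishing the $1/N$ gain in each $L^2$ summand: the uniform bound from Lemma \ref{lep3} alone gives only $O(\|{\bf M}\|^2)$ per summand and hence the useless $O(n\|{\bf M}\|^2)$ overall. The sharper $O(\|{\bf M}\|^2/N)$ requires simultaneously exploiting the $({\rm E}_k-{\rm E}_{k-1})$ cancellation of the $\by_k$-independent trace part and the quadratic-form inequality of Lemma \ref{lep1} for the fluctuating part, in parallel with the proof of inequality (4.8) in Bai--Silverstein \cite{bai2004clt} that this lemma extends to the separable model (\ref{a1}).
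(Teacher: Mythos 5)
Your argument is correct and is essentially the proof of inequality (4.8) in Bai--Silverstein \cite{bai2004clt}, which is all the paper itself offers for this lemma (it is stated by citation, with no independent proof): martingale decomposition in $k\neq j$, replacement of each increment by the rank-one difference $\rtr\bd_j^{-1}{\bf M}-\rtr\bd_{jk}^{-1}{\bf M}$, and the $O(\|{\bf M}\|^2/N)$ bound per increment obtained by splitting off the $\by_k$-independent trace part (killed by ${\rm E}_k-{\rm E}_{k-1}$ after replacing $\beta_{jk}$ by $\widetilde\beta_{jk}$) and controlling the centered quadratic form via Lemma \ref{lep1}. The only cosmetic slip is the crude bound $\|{\bf M}\|/v_0$ on the exceptional event: for $z\in\mathcal{C}_l\cup\mathcal{C}_r$ Lemma \ref{lep3} only gives $\|{\bf M}\|/v$ with $v\ge n^{-1}\varepsilon_n$, but the resulting polynomial factor is absorbed by the $o(n^{-l})$ probabilities in (\ref{max})--(\ref{min}) exactly as you indicate.
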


\begin{lemma}[Theorem A.37 in \cite{bai2010spectral}]\label{lep8}
If $\ba$ and $\bb$ are two $ n\times p$ matrices and $\lambda_k,\delta_k,k=1,2,\cdots,n$ denote their singular values. If the singular values are arranged in  descending order, then we have
\begin{align*}
\sum_{k=1}^\nu|\lambda_k-\delta_k|^2\le\rtr\left[\left(\ba-\bb\right)\left(\ba-\bb\right)^*\right]
\end{align*}
where $\nu=\min\{p,n\}$.
\end{lemma}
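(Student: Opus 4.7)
My plan is to reduce this singular-value perturbation bound to the classical Hoffman--Wielandt eigenvalue inequality via the standard Hermitian dilation. I introduce the $(n+p)\times(n+p)$ Hermitian block matrices
\begin{align*}
\widetilde{\ba}=\begin{pmatrix}{\bf 0}&\ba\\ \ba^{*}&{\bf 0}\end{pmatrix},\qquad \widetilde{\bb}=\begin{pmatrix}{\bf 0}&\bb\\ \bb^{*}&{\bf 0}\end{pmatrix}.
\end{align*}
Using the singular value decompositions of $\ba$ and $\bb$, I verify the well-known spectral structure: the eigenvalues of $\widetilde\ba$ listed in decreasing order are $\lambda_1,\ldots,\lambda_\nu,0,\ldots,0,-\lambda_\nu,\ldots,-\lambda_1$, with $|n-p|$ zeros in the middle block, and analogously for $\widetilde\bb$ with the $\delta_k$.

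Next I compute the Frobenius norm of the difference. Since the off-diagonal blocks of $\widetilde\ba-\widetilde\bb$ are $\ba-\bb$ and $(\ba-\bb)^{*}$, direct block computation gives
\begin{align*}
\|\widetilde\ba-\widetilde\bb\|_F^{2}=2\,\rtr\bigl[(\ba-\bb)(\ba-\bb)^{*}\bigr].
\end{align*}
I then invoke the Hoffman--Wielandt inequality for the Hermitian pair $(\widetilde\ba,\widetilde\bb)$: pairing eigenvalues in decreasing order and summing the squared differences is bounded above by $\|\widetilde\ba-\widetilde\bb\|_F^{2}$. The matched sum decomposes along the three blocks of the ordered spectrum above: $\sum_{k=1}^\nu(\lambda_k-\delta_k)^{2}$ from the top $\nu$ positive eigenvalues; zero from the middle $|n-p|$ zero eigenvalues; and a second copy of $\sum_{k=1}^\nu(\lambda_k-\delta_k)^{2}$ from the bottom $\nu$ positions, since in decreasing order the negative eigenvalues pair as $-\lambda_\nu,\ldots,-\lambda_1$ against $-\delta_\nu,\ldots,-\delta_1$. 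Combining with the Frobenius identity and dividing by $2$ yields the claim.

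The only substantive ingredient is the Hoffman--Wielandt inequality for Hermitian matrices (Birkhoff/von Neumann--type), which is classical and used as a black box here. The rest is straightforward bookkeeping about the spectrum of the dilation and the block structure of its difference, so there is no genuine obstacle. An alternative route would expand $\|\ba-\bb\|_F^{2}$ directly using the SVDs and argue via Birkhoff's theorem that the cross term $\Re\rtr(\bv\boldsymbol\Lambda\bu^{*}\bu'\boldsymbol\Delta(\bv')^{*})$ is extremized (with the correct sign) when singular values are matched in decreasing order, but the dilation route is cleaner and is the one I would follow.
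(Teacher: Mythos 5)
Your proof is correct, and it is essentially the canonical argument: the paper itself gives no proof of this lemma (it is quoted verbatim as Theorem A.37 of Bai and Silverstein's book), and the proof there is exactly your Hermitian dilation reducing to the ordered Hoffman--Wielandt inequality, with the same spectrum bookkeeping and the factor-of-two Frobenius identity. Nothing to add.
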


\begin{lemma}\label{lep9}
For rectangular matrices $\ba,\bb,{\bf C},{\bf D}$, we have
\begin{align*}
\left|\rtr\left({\bf ABCD}\right)\right|\le \|\ba\|\|{\bf C}\|\left(\rtr{\bf BB}^*\right)^{1/2}\left(\rtr{\bf DD}^*\right)^{1/2}.
\end{align*}
\end{lemma}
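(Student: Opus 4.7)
The plan is to reduce the inequality to two standard facts: the Cauchy–Schwarz inequality for the Hilbert–Schmidt (Frobenius) inner product $\langle \mathbf{X},\mathbf{Y}\rangle = \rtr(\mathbf{X}\mathbf{Y}^*)$, and the elementary bound $\rtr(\mathbf{P}^*\mathbf{M}\mathbf{P}) \le \|\mathbf{M}\|\,\rtr(\mathbf{P}^*\mathbf{P})$ valid for any conformable matrices with $\mathbf{M}$ nonnegative (or by taking absolute values of eigenvalues in the general case). Both ingredients are entirely elementary, so the proof should be short.

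First I would group the four factors as $\rtr(\ba\bb\cdot\mathbf{C}\mathbf{D})$ and apply the trace Cauchy–Schwarz inequality to obtain
\begin{align*}
\left|\rtr(\ba\bb\mathbf{C}\mathbf{D})\right|
\;\le\; \left(\rtr(\ba\bb\bb^*\ba^*)\right)^{1/2}\left(\rtr(\mathbf{D}^*\mathbf{C}^*\mathbf{C}\mathbf{D})\right)^{1/2}.
\end{align*}
Next I would control each factor individually. Using the cyclic property and the standard bound for the trace of a product with a bounded operator,
\begin{align*}
\rtr(\ba\bb\bb^*\ba^*) = \rtr(\ba^*\ba\bb\bb^*) \le \|\ba^*\ba\|\,\rtr(\bb\bb^*) = \|\ba\|^2\rtr(\bb\bb^*),
\end{align*}
and analogously
\begin{align*}
\rtr(\mathbf{D}^*\mathbf{C}^*\mathbf{C}\mathbf{D}) \le \|\mathbf{C}\|^2\,\rtr(\mathbf{D}\mathbf{D}^*).
\end{align*}
Substituting these two bounds into the Cauchy–Schwarz estimate yields exactly the claimed inequality.

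There is essentially no obstacle here; the only small point to verify carefully is that the matrices are rectangular, so one must keep track of which product $\mathbf{B}\mathbf{B}^*$ versus $\mathbf{B}^*\mathbf{B}$ appears (they have the same trace, so the bound is unaffected), and that the grouping $(\ba\bb)(\mathbf{C}\mathbf{D})$ is indeed a product of two conformable matrices whose sizes match in the required way for Cauchy–Schwarz. Both are immediate from the hypothesis that $\ba\bb\mathbf{C}\mathbf{D}$ is well-defined and square.
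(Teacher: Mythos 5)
Your proof is correct. The paper states Lemma \ref{lep9} in its appendix without any proof, and your argument — Cauchy--Schwarz for the Frobenius inner product applied to the grouping $(\ba\bb)(\mathbf{C}\mathbf{D})$, followed by the bound $\rtr(\mathbf{P}^*\mathbf{M}\mathbf{P})\le\|\mathbf{M}\|\rtr(\mathbf{P}^*\mathbf{P})$ for nonnegative definite $\mathbf{M}$ applied with $\mathbf{M}=\ba^*\ba$ and $\mathbf{M}=\mathbf{C}^*\mathbf{C}$ — is the standard and complete derivation of exactly the stated inequality.
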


\begin{lemma}[Duhamel formula]\label{lea1}
Let ${\bf M}_1,{\bf M}_2$ be $n\times n$ matrices and $t\in\mathbb{R}$. Then we have
\begin{align*}
e^{({\bf M}_1+{\bf M}_2)t}=e^{{\bf M}_1t}+\int_0^te^{{\bf M}_1(t-s)}{\bf M}_2e^{({\bf M}_1+{\bf M}_2)s}ds.
\end{align*}
Moreover, if ${\bf A}(t)$ is a matrix-valued function of $t\in\mathbb{R}$ that is $C^{\infty}$ in the sense that each matrix element $\left[{\bf A}(t)\right]_{jk}$ is $C^{\infty}$. Then
\begin{align*}
\frac{de^{{\bf A}(t)}}{dt}=\int_0^1e^{s{\bf A}(t)}{\bf A}'(t)e^{(1-s){\bf A}(t)}ds.
\end{align*}
\end{lemma}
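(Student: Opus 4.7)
The plan is to prove the first (integral) identity by the standard ``interpolation'' trick and then deduce the second (derivative of $e^{{\bf A}(t)}$) identity from it by a difference quotient.

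For the first identity, I would fix $t$ and define the auxiliary matrix-valued function
\[
F(s)=e^{{\bf M}_1(t-s)}\,e^{({\bf M}_1+{\bf M}_2)s},\qquad s\in[0,t].
\]
Since ${\bf M}_1$ commutes with $e^{{\bf M}_1(t-s)}$, elementary differentiation gives
\[
F'(s)=-{\bf M}_1e^{{\bf M}_1(t-s)}e^{({\bf M}_1+{\bf M}_2)s}+e^{{\bf M}_1(t-s)}({\bf M}_1+{\bf M}_2)e^{({\bf M}_1+{\bf M}_2)s}=e^{{\bf M}_1(t-s)}{\bf M}_2e^{({\bf M}_1+{\bf M}_2)s}.
\]
Integrating this from $0$ to $t$ and noting $F(0)=e^{{\bf M}_1t}$ and $F(t)=e^{({\bf M}_1+{\bf M}_2)t}$ yields exactly the stated integral relation. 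This step only requires the power-series definition of the matrix exponential and the fact that a matrix commutes with any analytic function of itself, so it should be essentially routine.

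For the second identity, I would apply the first identity with $t$ replaced by $1$, ${\bf M}_1={\bf A}(t)$, and ${\bf M}_2={\bf A}(t+h)-{\bf A}(t)$, which gives
\[
e^{{\bf A}(t+h)}-e^{{\bf A}(t)}=\int_0^1 e^{(1-s){\bf A}(t)}\bigl({\bf A}(t+h)-{\bf A}(t)\bigr)e^{s{\bf A}(t+h)}\,ds.
\]
Dividing by $h$ and letting $h\to 0$, the integrand converges pointwise to $e^{(1-s){\bf A}(t)}{\bf A}'(t)e^{s{\bf A}(t)}$; since ${\bf A}$ is $C^\infty$, $\|e^{s{\bf A}(t+h)}\|$ is uniformly bounded on $s\in[0,1]$ for $h$ near $0$, so dominated convergence applies and yields
\[
\frac{d}{dt}e^{{\bf A}(t)}=\int_0^1 e^{(1-s){\bf A}(t)}{\bf A}'(t)\,e^{s{\bf A}(t)}\,ds.
\]
A final change of variable $u=1-s$ brings this into the form stated in the lemma.

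The only genuine technical point is justifying the interchange of limit and integral in the second step; this is handled by observing that, for $h$ in a bounded neighborhood of $0$, the spectral radii of ${\bf A}(t+h)$ are uniformly bounded, so the family $\{e^{s{\bf A}(t+h)}\}_{s\in[0,1],\,h\to0}$ is uniformly bounded in norm, and $h^{-1}({\bf A}(t+h)-{\bf A}(t))\to{\bf A}'(t)$ uniformly on compact $s$-intervals. Everything else is formal manipulation of matrix exponentials.
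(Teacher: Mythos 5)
Your proof is correct: the interpolation function $F(s)=e^{{\bf M}_1(t-s)}e^{({\bf M}_1+{\bf M}_2)s}$ with the fundamental theorem of calculus gives the first identity, and the difference-quotient argument (with the uniform norm bound $\|e^{s{\bf A}(t+h)}\|\le e^{\|{\bf A}(t+h)\|}$ justifying the passage to the limit) gives the second. The paper states this lemma in the appendix as a standard fact without proof, so there is no alternative argument to compare against; yours is the canonical one.
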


\end{appendix}

%\bibliographystyle{abbrv}
%\bibliography{article}

\begin{thebibliography}{1}

\bibitem{bai2004clt}
Z.~D.~Bai and J.~W. Silverstein.
\newblock CLT for linear spectral statistics of large-dimensional sample
  covariance matrices.
\newblock {\em The Annals of Probability}, 32(1A): 553--605, 2004.

\bibitem{bai2010spectral}
Z.~D.~Bai and J.~W. Silverstein.
\newblock {\em Spectral analysis of large dimensional random matrices}.
\newblock Springer, 2010.

%\bibitem{Baiwangzhou2010}
%Z.~D. Bai, X. Wang, and W. Zhou.
%\newblock flunctional CLT for sample covariance matrices.
%\newblock {\em Bernoulli}, 16(4): 1086--1113, 2010.

\bibitem{BaiYin1993}
Z.~D. Bai and Y.~Q. Yin.
\newblock Limit of the smallest eigenvalue of a large dimensional sample
  covariance matrix.
\newblock {\em The Annals of Probability}, 21(3): 1275--1294, 1993.

%\bibitem{Baizhou2008}
%Z.~D. Bai and W. Zhou.
%\newblock Large sample covariance matrices without independence structures in columns.
%\newblock {\em Statistica Sinica}, 18(2): 425, 2008.

\bibitem{bill}
P. Billingsley.
\newblock {\em Convergence of Probability Measures}.
\newblock Wiley, New York, 1968.

\bibitem{MKV}
A. Boutet de Mondvel, A. Khorunzhy and V. Vasilchuk. \newblock  Limiting eigenvalue
distribution of random matrices with correlated entries. \newblock {\em Markov Processes and Related Fields}
2, 607-636. 1996.

\bibitem{CH}
R. Couillet and W. Hachem.
\newblock  Analysis of the limiting spectral measure of large random matrices of the separble covariance type.
\newblock {\em Random Matrices: Theory and Applications}, 3(4):1450016, 2014.


\bibitem{distri}
F.~Gotze, A.~Naumov, and A.~Tikhomirov.
\newblock Distribution of linear statistics of singular values of the product
  of random matrices.
\newblock {\em arXiv preprint arXiv:1412.3314v2}, 2015.

\bibitem{HPN}
 W. Hachem, P. Loubaton, and J. Najim. \newblock The empirical distribution of the eigenvalues of a
Gram matrix with a given variance profile.
\newblock {\em  Ann. Inst. H. Poincare Probab. Statist.}, 42(6):649-670, 2006.


\bibitem{jy}
T. Jiang and F. Yang. Central Limit Theorems for Classical Likelihood Ratio Tests for High-Dimensional Normal Distributions. \newblock {\em Ann. Stat.} 41(4), 2029-2074, 2013.

\bibitem{J}
D. Jonsson.  Some limit theorems for the eigenvalues of a sample covariance matrix
\newblock {\em J. Multivariate Anal.} 12 1?8, 1982.

\bibitem{lw}
O. Ledoit and M. Wolf. Some hypothesis tests for the covariance matrix when the dimension is large compared to the sample size. \newblock {\em Ann. Stat.} 30(4), 1081-1102, 2002.

\bibitem{Leng:Tang:2012}
C. Leng and C. Y. Tang.  Sparse matrix graphical models. \newblock{\em Journal of the American Statistical Association,} 107, 1287--1300. 2012.


\bibitem{Lytova22}
A.~Lytova and L.~Pastur.
\newblock Central limit theorem for linear eigenvalue statistics of random
  matrices with independent entries.
\newblock {\em The Annals of Probability}, 37(5): 1778--1840, 2009.

\bibitem{JJ}
J. Najim, J. F. Yao. Gaussian fluctuations for linear spectral statistics of large random covariance matrices. \newblock {\em Ann. Applied, Prob.} 26(3), 1837-1887, 2016.

\bibitem{AO}
A. Onatskiy.
\newblock Determining the Number of Factors From Empirical Distribution of Eigenvalues. \newblock {\em Review of Economics and Statistics}. 92 (4), 1004-1016. 2010.

\bibitem{Panzhou2008}
G. M. Pan and W.~Zhou.
\newblock Central limit theorem for signal-to-inferference ratio of reduced rank linear receiver.
\newblock {\em The Annals of Applied Probability}, 18: 1232--1270, 2008.

\bibitem{PS}
D. Paul and J. Silverstein.
\newblock No eigenvalues outside the support of limiting empirical spectral
distribution of a separable covariance matrix.  \newblock {\em Journal of Multivariate Analysis} 100(1), 37-57. 2009.



\bibitem{Sch2011}
M. Shcherbina.
\newblock Central limit theorem for linear eigenvalues statistics of Wigner and sample covariance random matrices.
\newblock {\em Math. Physics, Analysis, Geometry}, 72: 176--192, 2011.

\bibitem{silverstein1995empirical}
J.~W. Silverstein and Z. D.~Bai.
\newblock On the empirical distribution of eigenvalues of a class of large
  dimensional random matrices.
\newblock {\em Journal of Multivariate analysis}, 54(2): 175--192, 1995.

\bibitem{wishart1928generalised}
J.~Wishart.
\newblock The generalised product moment distribution in samples from a normal
  multivariate population.
\newblock {\em Biometrika}, 20(1/2): 32--52, 1928.

\bibitem{Yao}
J. F. Yao, S. R. Zheng and Z. D. Bai. Sample covariance matrices and
high-dimensional data analysis.  Cambridge University Press. 2015.

\bibitem{Yin1988}
Y.~Yin, Z. D.~Bai, and P.~Krishnaiah.
\newblock On the limit of the largest eigenvalue of the large dimensional
  sample covariance matrix.
\newblock {\em Probability Theory and Related Fields}, 78(4): 509--521,
  1988.

\bibitem{zlx}
L. X. Zhang.
\newblock  Spectral analysis of large dimensional random matrices. The phd thesis, 2006.
\newblock {\em Natioanl University of Singapore}.

\end{thebibliography}

\end{document}